\newcommand{\lra}{\longrightarrow}
\newcommand{\RR}{\mathbb{R}}
\newcommand{\vep}{\varepsilon}
\newcommand{\uu}{\mathscr{U}}
\newcommand{\sff}{\mathrm{II}}
\newtheorem{thm}{Theorem}
\newtheorem{lemma}{Lemma}
\newtheorem{defn}{Definition}
\newtheorem{prop}{Proposition}
\newtheorem*{definition-non}{Definition}
\newtheorem*{theorem-non}{Theorem}
\newtheorem*{proposition-non}{Proposition}
\newtheorem*{lemma-non}{Lemma}
\newtheorem*{corollary-non}{Corollary}
\newcommand{\beqa}{\begin{eqnarray}}
\newcommand{\beq}{\begin{equation}}
\newcommand{\eeqa}{\end{eqnarray}}
\newcommand{\eeq}{\end{equation}}
\newcommand\ip[2]{g({#1},{#2})} 
\newcommand\imp{\hspace{.2in}\Rightarrow\hspace{.2in}}
\newcommand\kk{T}
\newcommand\cd[2]{\nabla_{\!#1}{#2}}
\newcommand\conn[2]{\overline{\nabla}_{\!#1}{[#2]}}
\newcommand\cds[2]{\nabla^{\scalebox{0.4}{\emph{L}}}_{\!#1}{#2}}
\newcommand\gL{g_{\scalebox{.4}{\emph{L}}}}
\newcommand\RmL{\text{Rm}_{\scalebox{0.4}{\emph{L}}}}
\newcommand\hc{g_{\scalebox{.4}{\emph{b,c}}}}
\newcommand\gpw{h_{\scriptscriptstyle \vep}}
\newcommand\hpw{h_{\scalebox{.4}{PW}}}
\newcommand\gbc{g_{\scalebox{0.4}{\emph{b,c}}}}
\newcommand\gR{g_{\scalebox{0.4}{\emph{R}}}}
\newcommand\comma{\hspace{.2in},\hspace{.2in}}
\newcommand\commas{\hspace{.1in},\hspace{.1in}}
\newcommand*{\defeq}{\mathrel{\rlap{%
                     \raisebox{0.24ex}{$\m@th\cdot$}}%
                     \raisebox{-0.24ex}{$\m@th\cdot$}}%
                     =}
\newcommand*\owedge{\mathpalette\@owedge\relax}
\newcommand*\@owedge[1]{%
  \mathbin{%
    \ooalign{%
      $#1\m@th\bigcirc$\cr
      \hidewidth$#1\m@th\wedge$\hidewidth\cr
    }%
  }%
}
\newcommand{\kn}{\ensuremath{\raisebox{.04 em}{\,${\scriptstyle \owedge}$\,}}}  
\providecommand{\customgenericname}{}
\newcommand{\newcustomtheorem}[2]{%
  \newenvironment{#1}[1]
  {%
   \renewcommand\customgenericname{#2}%
   \renewcommand\theinnercustomgeneric{##1}%
   \innercustomgeneric
  }
  {\endinnercustomgeneric}
}
\begin{document}
\title[]{Obstructions to distinguished Riemannian metrics via Lorentzian geometry}
\author[]{Amir Babak Aazami}
\address{Clark University\hfill\break\indent
Worcester, MA 01610}
\email{Aaazami@clarku.edu}

\maketitle
\begin{abstract}
We approach the problem of finding obstructions to curvature distinguished Riemannian metrics by considering Lorentzian metrics to which they are dual in a suitable sense.  Obstructions to the latter then yield obstructions to the former.  This framework applies both locally and globally, including to compact manifolds, and is sensitive to various aspects of curvature.  Here we apply it in two different ways.  First, by embedding a Riemannian manifold into a Lorentzian one and utilizing Penrose's ``plane wave limit," we find necessary local conditions, in terms of the Hessian of just one function, for large classes of Riemannian metrics to contain within them those that have parallel Ricci tensor, or are Ricci-flat, or are locally symmetric.  Second, by considering Riemannian metrics dual to constant curvature Lorentzian metrics via a type of Wick rotation, we are able to rule out the existence of a family of compact Riemannian manifolds (in all dimensions) that deviate from constant curvature in a precise sense.
\end{abstract}

\section{Introduction \& Overview of Results}
What role may the Lorentzian geometry of general relativity play in the service of Riemannian geometry?  In this article we promote the viewpoint that techniques and results specific to Lorentzian geometry can be used to obtain \emph{nonexistence} results in the Riemannian setting.  In what follows we shall demonstrate two ways in which this can happen, one local, one global.  While these two ways are quite different in flavor from each other, they both yield obstructions to the existence of distinguished Riemannian metrics, and they both rest firmly on the belief that it is worthwhile to ask whether a given Riemannian metric is somehow or other related to a distinguished Lorentzian metric.

\subsection{The local case.} We show in Section \ref{sec:PWL} that it is possible to ``simplify" a Riemannian metric $g$\,---\,in such a way that some features of its curvature are preserved, such as the condition of being Ricci-flat or of being locally symmetric\,---\,by realizing $g$ as the induced metric on an appropriately chosen submanifold of a Lorentzian manifold.  There are precedents to this approach (e.g., \cite{fefferman}), but our motivation derives from the following fact, well known to practitioners of general relativity (see, e.g., \cite{blau2,andersonCG}): Every Lorentzian manifold admits, locally, a gravitational \emph{plane wave} metric via an appropriate limit, known as Penrose's ``plane wave limit" \cite{penPW}.  As remarked by Penrose himself, his construction is analogous to that of the tangent space to a manifold at a given point\,---\,except that here the point is replaced by the integral curve of a ``null" vector field; i.e., one that is nonzero but orthogonal to itself.  (In Section \ref{sec:PWL1} and the Appendix, we provide a self-contained description of Penrose's plane wave limit and of plane wave metrics.) Putting aside the connection to gravitational physics, of interest for us here are the following two features of this construction:
\begin{enumerate}[leftmargin=*]
\item[i.] A plane wave metric is typically a \emph{far simpler} metric than the original Lorentzian metric to which it was a limit.  Indeed, when written down in so called \emph{Brinkmann coordinates} (see \eqref{eqn:Riem1} of the Appendix), only one of its metric components is nonconstant.  This has the desirable consequence that most of the components of its curvature 4-tensor vanish.
\item[ii.] By a more general argument due to Geroch \cite{gerochL}, certain curvature properties of the Lorentzian metric are \emph{preserved} in this limit\,---\,and this despite the fact that Penrose's limit is highly coordinate-dependent. Geroch coined these the ``hereditary" properties of the limit; in Proposition \ref{thm:hereditary} of Section \ref{sec:PWL1}, we provide for convenience a proof of this (well known) fact when the metric is either Ricci-flat, or locally symmetric, or locally conformally flat, or has parallel Ricci tensor.  (In fact, if any covariant tensor constructed from the Riemann curvature tensor and its derivatives happens to vanish for the Lorentzian metric, then it will also vanish for its plane wave limit.)
\end{enumerate}
How may this state of affairs be of use to a Riemannian geometer?  As follows: Suppose that one has a Riemannian manifold $(M,g)$, viewed locally in some choice of coordinates, and one would like to know, e.g., if it has parallel (i.e., covariantly constant) Ricci tensor, or if it is Ricci-flat, or if it is locally symmetric,  by  which is  meant, respectively, that
\beqa
\label{eqn:rule}
\nabla \text{Ric} = 0 \comma \text{Ric} = 0 \comma \nabla\text{Rm} = 0,
\eeqa
where $\text{Ric}$ and $\text{Rm}$ are the Ricci tensor and curvature 4-tensor of $g$, and $\nabla$ is its Levi-Civita connection.  If any of these was the case, then the components $g_{ij}$ would satisfy a certain system of PDEs in the given coordinates.  Now, instead of examining these PDEs directly, suppose that one realizes $(M,g)$ as a time-symmetric slice of the Lorentzian manifold
$$
(\RR\times M, -(dt)^2 + g),
$$
and then takes the plane wave limit of this\,---\,the motivation being, of course, precisely points i. and ii. above.  The virtue of a time-symmetric embedding is that curvature properties of $g$ either carry over directly to $-(dt)^2 + g$ or else are only slightly altered (see Lemma \ref{lemma:1} in Section \ref{sec:pp2}).  (While a time-symmetric embedding is, for this reason, the most desirable choice for our purposes,  in Propositions \ref{thm:pp-waves} and \ref{prop:pp-wave} of the Appendix we show that there is a rigid relationship between Riemannian metrics and the class of Lorentzian \emph{Brinkmann spacetimes} of which gravitational plane waves are an example.)  Our main result is that, by applying Penrose's plane wave limit to $g$ in this way, one can directly identify\,---\,with very little ``initial data"\,---\,quite large classes of Riemannian metrics that \emph{don't}  satisfy \eqref{eqn:rule}; thanks to Penrose's limit, all that is needed is the Hessian of just one function:

\begin{customthm}{1}
\label{thm:1}
Let $(\bar{g}_{ij}(r))_{i,j=2,\dots,n}$ be a symmetric, positive-definite matrix of smooth functions $\bar{g}_{ij}(r)$, and let $\mathscr{F}_{\scalebox{0.5}{$\bar{g}$}}\!$ denote the set of all Riemannian metrics $g_{\scalebox{0.4}{R}}$  on $\RR^{n} = \{(r,x^2,\dots,x^n)\}$ of the form
\beqa
\label{eqn:Lee0}
g_{\scalebox{0.4}{R}} \defeq (dr)^2 + g_{ij}(r,x^2,\dots,x^n)dx^idx^j \comma g_{ij}(r,0,\dots,0) = \bar{g}_{ij}(r).
\eeqa
On $\RR^{n+1} = \{(r,t,x^2,\dots,x^n)\}$, define the Lorentzian plane wave metric
\beqa
h_{\scalebox{.4}{\emph{PW}}} \defeq \begin{pmatrix}
        0 & 1  & 0 & \cdots & 0\\
        1 & 0 & 0 & \cdots & 0\\
        0 & 0 & \bar{g}_{22}(r) & \cdots & \bar{g}_{2n}(r)\\
        0 & 0 & \bar{g}_{32}(r) & \cdots & \bar{g}_{3n}(r)\\
        \vdots & \vdots & \vdots & \ddots &  \vdots\\
        0 & 0 & \bar{g}_{n2}(r) & \cdots & \bar{g}_{nn}(r)
      \end{pmatrix}\cdot\nonumber
 \eeqa
Express $h_{\scalebox{.4}{\emph{PW}}}$ in ``Brinkmann coordinates" as in \eqref{eqn:Riem1}, with corresponding function $H(u,x^2,\dots,x^n)$ given by \eqref{eqn:HH}. Let $\Delta H$ denote the \emph{(}Euclidean\emph{)} Laplacian of $H$ with respect to $x^2,\dots,x^n$. Then the following is true\emph{:}
\begin{enumerate}[leftmargin=*]
\item[\emph{1.}] If $\Delta H$ is nonconstant, then no $g_{\scalebox{0.4}{R}} \in \mathscr{F}_{\scalebox{0.5}{$\bar{g}$}}\!$ can have parallel Ricci tensor.
\item[\emph{2.}] If $\Delta H \neq 0$, then no $g_{\scalebox{0.4}{R}} \in \mathscr{F}_{\scalebox{0.5}{$\bar{g}$}}\!$ can be Ricci-flat.  
 \item[\emph{3.}] If any $H_{iju} \neq 0$, then no $g_{\scalebox{0.4}{R}} \in \mathscr{F}_{\scalebox{0.5}{$\bar{g}$}}\!$ can be locally symmetric.
\end{enumerate}
\end{customthm}

Let us make a few remarks about this result.
\begin{enumerate}[leftmargin=*]
\item[i.] First, observe that there is no obstruction to a Riemannian metric taking the form $g_{\scalebox{0.4}{\emph{R}}}$ in \eqref{eqn:Lee0}, because every Riemannian metric can be put locally in such a form; see \eqref{eqn:sg0} in Section \ref{sec:pp2}.
\item[ii.] Second, as shown in \eqref{eqn:HH}, the function $H(u,x^2,\dots,x^n)$ is always a quadratic polynomial in $x^2,\dots,x^n$ (though not necessarily in $u$), so that $\Delta H$ is always a function of $u$ alone.
\item[iii.] Finally, the following analogy is helpful in interpreting Theorem \ref{thm:1}.  Suppose on the closed unit ball in $\RR^n$, one specifies a set of functions $\{\bar{g}_{ij}\}$ on the boundary sphere $\mathbb{S}^{n-1}$, and then asks whether a Riemannian metric $g$ exists in the unit ball that is both Ricci-flat (say) \emph{and} such that each component $g_{ij}$ approaches the corresponding function $\bar{g}_{ij}$ at $\mathbb{S}^{n-1}$.  Theorem \ref{thm:1} is akin to this\,---\,except that an $(n-1)$-dimensional boundary $\mathbb{S}^{n-1}$ is replaced by a \emph{one}-dimensional one (namely, the $r$-axis).  Viewed in this light, Theorem \ref{thm:1} can be interpreted as providing (local) obstructions to curvature distinguished Riemannian metrics based on very little ``initial" or ``boundary" data.  This data is encoded in the function $H$, which is \emph{uniquely} associated to the given set $\mathscr{F}_{\scalebox{0.5}{$\bar{g}$}}$.
\end{enumerate}

\subsection{The global case.} We show in Section \ref{sec:global1} that a certain one-parameter family of Riemannian metrics $g_{\scalebox{0.5}{\emph{$\lambda$}}}$ cannot exist on any compact manifold $M$\,---\,not by appealing directly to a topological argument, or by constructing vector bundles over $M$, or even by examining the conformal class of $g_{\scalebox{0.5}{\emph{$\lambda$}}}$\,---\,but rather by showing that if such a $g_{\scalebox{0.5}{\emph{$\lambda$}}}$ existed, then so would a certain Lorentzian metric $\gL$ on $M$, the existence of which is known to be impossible.  The $\gL$ we have in mind is a \emph{spherical space form}; i.e., a Lorentzian metric with constant positive sectional curvature.  In the noncompact setting, de Sitter spacetime is the classic example, on $M = \RR \times \mathbb{S}^{n}$; however, when $M$ is compact, it is well known, by a landmark result due collectively to \cite{CM} and \cite{Klingler}, that there are no such space forms in any dimension.  In order to ``apply" the nonexistence of such a metric to Riemannian geometry, we must first of all decide how to form Riemannian metrics from Lorentzian ones.  There are several ways of doing this in the literature.  For example, one may do this in the manner of the Gibbons-Hawking ansatz for producing gravitational instantons in dimension 4 \cite{GH,GH2}.  Or one can embed a Riemannian manifold as a submanifold of a Lorentzian one\,---\,Proposition \ref{prop:pp-wave} in Section \ref{sec:ppwaves} provides a particularly interesting example of this.  Or one can pursue a general form of Wick rotation, one that is independent of coordinates, by picking a unit timelike vector field $T$ on a Lorentzian manifold $(M,\gL)$ and forming the metric
\beqa
\label{eqn:3*}
g \defeq \gL + 2 T^{\flat_{\scalebox{0.3}{\emph{L}}}} \otimes T^{\flat_{\scalebox{0.3}{\emph{L}}}},
\eeqa
where $T^{\flat_{\scalebox{0.3}{\emph{L}}}} \defeq \gL(T,\cdot)$ (``unit timelike" here means that $\gL(T,T) = -1$).  Note that $g(T,T) = 1$ and $T^{\flat} \defeq g(T,\cdot) = -\gL(T,\cdot)$, so that we may just as well have expressed \eqref{eqn:3*} as
$
\gL = g - 2T^{\flat} \otimes T^{\flat}.
$
All three of these constructions yield a Riemannian metric that is somehow ``dual" to a Lorentzian one.  In this paper we center our attention on \eqref{eqn:3*}, and observe that it immediately yields nonexistence results: For if ``$\gL$" in \eqref{eqn:3*} does not exist (e.g., if it's a compact spherical space form), then for any choice of $T$ neither can the corresponding Riemannian metric $g$.  The problem, however, is that for an arbitrary choice of $T$ the curvature of $g$ can vary wildly from that of $\gL$ (see, e.g., \cite{olea}); therefore, we must take care to choose both a ``nonexistent" $\gL$ \emph{and} a suitable choice of $T$, preferably one for which the corresponding Riemannian metric $g$ has distinguished curvature.  Here we take our cue from de Sitter spacetime itself, which is the warped product $(\RR \times \mathbb{S}^{n},-dt^2 + f(t) \mathring{g})$, where  $\mathring{g}$ is the standard Riemannian metric on $\mathbb{S}^{n}$ and where
$
f(t) = r^2 \cosh^2 \left(\frac{t}{r}\right),
$
with $r$ the radius of $\mathbb{S}^{n}$.  With respect to this metric, the vector field $T \defeq\partial_t$ is not only unit timelike, it is also \emph{closed}: $dT^{\flat} = 0$.  The condition of being closed plays a very important role here. As we show in Proposition \ref{prop:closedT} of Section \ref{sec:Prop}, for any such $T$ the 2-tensor $\nabla T^{\flat}$ will be symmetric, one consequence of which is that the curvature 4-tensors $\text{Rm}$ and $\text{Rm}_{\scalebox{0.4}{\emph{L}}}$ of $g$ and $\gL$ in \eqref{eqn:3*} will be related to each in a very direct way:
$$
\text{Rm}_{\scalebox{.4}{\emph{L}}} = \text{Rm} + \nabla T^{\flat} \kn \nabla T^{\flat}.
$$
(Here $\nabla$ is the Levi-Civita connection of $g$ and $\!\kn\!$ is the Kulkarni-Nomizu product.) In such a case, our main result is the following:

\begin{customthm}{2}
\label{thm:3}
Let $(M,g)$ be a Riemannian $n$-manifold $(n \geq 3)$ and $T$ a unit length closed vector field on $M$.  For $\lambda \in \RR$, the curvature 4-tensor of $g$ takes the form
\beqa
\label{eqn:10}
\emph{\text{Rm}} = \frac{1}{2}\lambda g \kn g - 2\lambda g \kn (T^{\flat}\otimes T^{\flat}) - \nabla T^{\flat} \kn \nabla T^{\flat}
\eeqa
if and only if the Lorentzian metric $g_{\scalebox{0.4}{L}} \defeq g - 2T^{\flat} \otimes T^{\flat}$ has constant curvature $\lambda$.  If $M$ is compact, then $\lambda = 0$ and $T$ is parallel\emph{;} hence $g$ is flat, and its universal cover splits isometrically as a product $\RR \times N$.
\end{customthm}

Thus the nonexistence of compact Lorentzian spherical space forms rules out the existence of a certain family of Riemannian metrics that are themselves closely related to (Riemannian) space forms (as evidenced by the constant curvature term $\frac{1}{2}\lambda g \kn g$ in \eqref{eqn:10}).  For such choices of $T$ and $g$, we would argue, the relationship \eqref{eqn:3*} is valuable.
\vskip 6pt
With that said, we now begin in the local setting, with the construction of a Riemannian version of Penrose's ``plane wave limit."

\section{Local obstructions to Distinguished Riemannian metrics}
\label{sec:PWL}
\subsection{Penrose's plane wave limit}
\label{sec:PWL1}
Penrose's ``plane wave limit" \cite{penPW} is in fact a special case of a more general notion of ``spacetime limit" due to Geroch \cite{gerochL}.  Here we give a self-contained description of Penrose's limit, though we point out that the material in this Section can be found (in different notation) in \cite{gerochL,penPW,philip}.  Consider a Lorentzian metric $g$ (with Levi-Civita connection $\nabla$) given locally in so called ``null coordinates" $(x^0,x^1,x^2,\dots,x^n)$:
\beqa
\label{nc}
(g_{ij}) \defeq
    \begin{pmatrix}
        0 & 1  & 0 & 0 & \cdots & 0\\
        1 & g_{11} & g_{12} & g_{13} & \cdots & g_{1n}\\
        0 & g_{21} &  g_{22} & g_{23} & \cdots &  g_{2n}\\
        0 & g_{31} &  g_{32} & g_{33} & \cdots & g_{3n}\\
        \vdots & \vdots & \vdots & \vdots & \ddots &  \vdots\\
        0 & g_{n1} & g_{n2} & g_{n3} & \cdots & g_{nn}
      \end{pmatrix}\cdot
\eeqa
(In fact every Lorentzian metric $g$ can be put locally in this form, in what is called a \emph{null coordinate chart}; for the details of this construction, consult, e.g., \cite[Proposition~7.14,~p.~63]{pen}.) Observe that $\partial_0 = \text{grad}\,x^1$ is a \emph{null} (or \emph{lightlike}) vector field: $g_{00} = 0$; furthermore, because it is also a gradient, it has geodesic flow: $\cd{\partial_0}{\partial_0} = 0$.
This vector field is central to Penrose's construction, which proceeds as follows.  First, define a new coordinate system $(\tilde{x}^0,\tilde{x}^1,\tilde{x}^2,\dots,\tilde{x}^n)$ via the following diffeomorphism $\varphi_{\scriptscriptstyle \vep}$,
\beqa
\label{eqn:tilde}
(x^0,x^1,x^2,\dots,x^n) \overset{\varphi_{\scriptscriptstyle \vep}}{\mapsto} (\tilde{x}^0,\tilde{x}^1,\tilde{x}^2,\dots,\tilde{x}^n) \defeq \Big(x^0,\frac{x^1}{\vep^2},\frac{x^2}{\vep},\dots,\frac{x^n}{\vep}\Big),
\eeqa
where $\vep > 0$ is a constant; let 
\beqa
\label{eqn:gg_vep}
g_{\scriptscriptstyle \vep} \defeq (\varphi_{\scriptscriptstyle \vep}^{-1})^*g
\eeqa
denote the metric in these new coordinates.  Since $x^0$ is an affine parameter along the geodesic integral curve of $\partial_0$ through the origin, observe that the limit $\vep \to 0$ has the effect of ``zooming infinitesimally close to" this integral curve, pushing the remaining coordinates $x^1,\dots,x^n$ out to infinity.  In fact this is precisely the limit that Penrose will eventually take.  Before doing that, define another Lorentzian metric, $\gpw$, in the new coordinates $(\tilde{x}^0,\tilde{x}^1,\tilde{x}^2,\dots,\tilde{x}^n)$, by
\beqa
\label{newmetric}
\big((\gpw)_{ij}\big) \defeq 
    \underbrace{\,\begin{pmatrix}
        0 & 1  & 0 & 0 & \cdots & 0\\
        1 & \vep^2g_{11} & \vep g_{12} & \vep g_{13} & \cdots & \vep g_{1n}\\
        0 & \vep g_{21} &  g_{22} & g_{23} & \cdots & g_{2n}\\
        0 & \vep g_{31} &  g_{32} & g_{33} & \cdots & g_{3n}\\
        \vdots & \vdots & \vdots & \vdots & \ddots &  \vdots\\
        0 & \vep g_{n1} & g_{n2} & g_{n3} & \cdots & g_{nn}
      \end{pmatrix}\,}_{\text{defined in the coordinates}~(\tilde{x}^0,\tilde{x}^1,\tilde{x}^2,\dots,\tilde{x}^n)},
\eeqa
where each component $(\gpw)_{ij}$ is (strategically) defined as follows,
\beqa
(\gpw)_{11}(\tilde{x}^0,\tilde{x}^1,\tilde{x}^2,\dots,\tilde{x}^n) \!\!&\defeq&\!\! \vep^2 \underbrace{\,{g}_{11}(\tilde{x}^0,\vep^2\tilde{x}^1,\vep\,\tilde{x}^2,\dots,\vep\,\tilde{x}^n)\,}_{=\,{g}_{11}(x^0,x^1,x^2,\dots,x^n)}\label{newcomp},\\
(\gpw)_{22}(\tilde{x}^0,\tilde{x}^1,\tilde{x}^2,\dots,\tilde{x}^n) \!\!&\defeq&\!\! \underbrace{\,g_{22}(\tilde{x}^0,\vep^2\tilde{x}^1,\vep\,\tilde{x}^2,\dots,\vep\,\tilde{x}^n)\,}_{=\,{g}_{22}(x^0,x^1,x^2,\dots,x^n)}\label{newcomp2},
\eeqa
and similarly with the others.  Note that as $\vep \to 0$,
\beqa
\lim_{\vep \to 0}  (\gpw)_{11} \!\!&\overset{\eqref{newcomp}}{=}&\!\! 0\cdot {g}_{11}(\tilde{x}^0,0,0,\dots,0) = 0,\nonumber\\
\lim_{\vep \to 0}  (\gpw)_{22} \!\!&\overset{\eqref{newcomp2}}{=}&\!\! {g}_{22}(\tilde{x}^0,0,0,\dots,0),\label{newcomp4}\nonumber
\eeqa
and similarly with the others.  Crucially, the metric $\gpw$ is \emph{homothetic} to $g_{\scriptscriptstyle \vep}$; to see this, use the fact that, via \eqref{eqn:tilde} and \eqref{eqn:gg_vep}, 
$$
\underbrace{\,g_{\scriptscriptstyle \vep}(\partial_{\tilde{x}^i},\partial_{\tilde{x}^j})\,}_{g\left(d\varphi_{\scriptscriptstyle \vep}^{-1}(\partial_{\tilde{x}^i}),d\varphi_{\scriptscriptstyle \vep}^{-1}(\partial_{\tilde{x}^i})\right)}\hspace{-.28in}d\tilde{x}^i \otimes d\tilde{x}^j\bigg|_{(\tilde{x}^0,\tilde{x}^1,\dots,\tilde{x}^n)} = g(\partial_{x^i},\partial_{x^j})dx^i\otimes dx^j\bigg|_{\varphi_{\scriptscriptstyle \vep}^{-1}(\tilde{x}^0,\tilde{x}^1,\dots,\tilde{x}^n)},
$$
where $d\varphi_{\scriptscriptstyle \vep}^{-1}(\partial_{\tilde{x}^1}) = \vep^2\,\partial_{x^1}, d\varphi_{\scriptscriptstyle \vep}^{-1}(\partial_{\tilde{x}^2}) = \vep\,\partial_{x^2}$, etc., to obtain
\beqa
dx^0 \otimes dx^1 \!\!&=&\!\! \vep^{2}\,d\tilde{x}^0 \otimes d\tilde{x}^1, \nonumber\\
g_{11}(x^0,x^1,x^2,\dots,x^n)\,dx^1 \otimes dx^1 \!\!&\overset{\eqref{newcomp}}{=}&\!\! \vep^{2}\,(\gpw)_{11}(\tilde{x}^0,\tilde{x}^1,\tilde{x}^2,\dots,\tilde{x}^n)\,d\tilde{x}^1 \otimes d\tilde{x}^1,\nonumber\\
g_{12}(x^0,x^1,x^2,\dots,x^n)\,dx^1 \otimes dx^2 \!\!&=&\!\! \vep^{2}\,(\gpw)_{12}(\tilde{x}^0,\tilde{x}^1,\tilde{x}^2,\dots,\tilde{x}^n)\,d\tilde{x}^1 \otimes d\tilde{x}^2,\nonumber\\
g_{22}(x^0,x^1,x^2,\dots,x^n)\,dx^2 \otimes dx^2 \!\!&\overset{\eqref{newcomp2}}{=}&\!\!\vep^{2}\,(\gpw)_{22}(\tilde{x}^0,\tilde{x}^1,\tilde{x}^2,\dots,\tilde{x}^n)\,d\tilde{x}^2 \otimes d\tilde{x}^2,\nonumber
\eeqa
and so on, which clearly yields the relationship
\beqa
\label{eqn:cm}
\gpw = \frac{g_{\scriptscriptstyle \vep}}{\vep^2}\cdot
\eeqa
In particular, the Levi-Civita connections of $h_{\scriptscriptstyle \vep}$ and $g_{\scriptscriptstyle \vep}$ are equal: $\nabla^{{\scalebox{0.55}{\text{$h_\vep$}}}} = \nabla$. The final step in Penrose's construction is to take the limit of $g_{\scriptscriptstyle \vep}/\vep^2$ as $\vep \to 0$,
$$
\lim_{\vep \to 0} \frac{g_{\scriptscriptstyle \vep}}{\vep^{2}} = \lim_{\vep \to 0} \gpw,
$$
which limit yields a \emph{nondegenerate}\,---\,and \emph{non-flat}!\,---\,Lorentzian metric:
\beqa
\label{plw2}
   \hpw \defeq \underbrace{\,\begin{pmatrix}
        0 & 1  & 0 & 0 & \cdots & 0\\
        1 & 0 & 0 & 0 & \cdots & 0\\
        0 & 0 &  g_{22}(\tilde{x}^0,0,\dots,0) & g_{23}(\tilde{x}^0,0,\dots,0) & \cdots & g_{2n}(\tilde{x}^0,0,\dots,0)\\
        0 & 0 &  g_{32}(\tilde{x}^0,0,\dots,0) & g_{33}(\tilde{x}^0,0,\dots,0) & \cdots & g_{3n}(\tilde{x}^0,0,\dots,0)\\
        \vdots & \vdots & \vdots & \vdots & \ddots &  \vdots\\
        0 & 0 & g_{n2}(\tilde{x}^0,0,\dots,0) & g_{n3}(\tilde{x}^0,0,\dots,0) & \cdots & g_{nn}(\tilde{x}^0,0,\dots,0)
      \end{pmatrix}\,}_{\text{defined in the coordinates}~(\tilde{x}^0,\tilde{x}^1,\tilde{x}^2,\dots,\tilde{x}^n)}\cdot
\eeqa
As it turns out, \eqref{plw2} is a distinguished and very well known metric in general relativity, a so called \emph{plane wave metric} (expressed here in so called ``Rosen coordinates"), and the limit we've just described is now known as Penrose's ``plane wave limit."  The most obvious feature of \eqref{plw2} is that \emph{it is a far simpler metric than $g$}; indeed, its components are functions of only the one variable $\tilde{x}^0 = x^0$, which means that its curvature properties are given by ODEs, not PDEs.  What is of fundamental importance for us is that the limit taken was that of a one-parameter family of metrics $\gpw$ each of which was homothetic \eqref{eqn:cm} to our original metric $g$.  For this reason, certain curvature properties are \emph{preserved} in this limit\,---\,in the language of \cite{gerochL}, they are ``hereditary":

\begin{prop}
\label{thm:hereditary}
Let $(M,g)$ be a Lorentzian manifold expressed locally in the form \eqref{nc}, and let $h_{\scalebox{.4}{\emph{PW}}}$ denote its corresponding plane wave limit \eqref{plw2}. Then the following is true\emph{:}
\begin{enumerate}[leftmargin=*]
\item[\emph{1.}] If $g$ is an Einstein metric, $\emph{\text{Ric}} = \lambda g$, then $h_{\scalebox{.4}{\emph{PW}}}$ is Ricci-flat.
\item[\emph{2.}] If $g$ is locally conformally flat, then so is $h_{\scalebox{.4}{\emph{PW}}}$.
\item[\emph{3.}] If $g$ is locally symmetric, $\nabla \emph{\text{Rm}} = 0$, then so is $h_{\scalebox{.4}{\emph{PW}}}$.
\item[\emph{4.}] If $g$ has parallel Ricci tensor, $\nabla \emph{\text{Ric}} = 0$, then so does $h_{\scalebox{.4}{\emph{PW}}}$.
\end{enumerate}
\end{prop}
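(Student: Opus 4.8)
The plan is to build everything on the single structural fact secured just above the statement: for each $\vep>0$ the metric $\gpw$ is homothetic to $g$. Indeed $g_{\scriptscriptstyle\vep}=(\varphi_{\scriptscriptstyle\vep}^{-1})^{*}g$ is isometric to $g$, being a diffeomorphic pullback, and $g_{\scriptscriptstyle\vep}=\vep^{2}\,\gpw$ by \eqref{eqn:cm}. Coupling this with the elementary remark that $\gpw\to\hpw$ smoothly on compact subsets of the chart as $\vep\to0$, the proposition reduces to two observations. The algebraic one: each of properties 1--4 is the vanishing of a tensor assembled by a universal recipe from the metric, its inverse, the curvature, and covariant derivatives of the curvature, and that tensor is invariant under a constant rescaling of the metric (or, more generally, homogeneous of some fixed weight in it). The analytic one: the map sending a nondegenerate metric to any such curvature tensor is continuous with respect to $C^{\infty}_{\mathrm{loc}}$ convergence. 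Granting both, the argument is uniform: the obstruction tensor vanishes for $g$, hence for $g_{\scriptscriptstyle\vep}$ by isometry invariance, hence for $\gpw=g_{\scriptscriptstyle\vep}/\vep^{2}$ (a constant rescaling cannot resurrect a tensor that was already zero), and then one lets $\vep\to0$.

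Case by case. For statement 3, $\nabla\mathrm{Rm}=0$: a constant rescaling leaves both the Levi-Civita connection and the $(1,3)$ curvature tensor unchanged, hence leaves $\nabla\mathrm{Rm}$ unchanged, so it vanishes for every $\gpw$ and therefore, by the analytic observation, for $\hpw$. For statement 4, $\nabla\mathrm{Ric}=0$: the $(0,2)$ Ricci tensor is a contraction of the $(1,3)$ curvature and hence also rescaling-invariant, as is $\nabla$, so the same reasoning applies. For statement 2, locally conformally flat: in dimension $\geq4$ this is the vanishing of the Weyl $(1,3)$-tensor, in dimension $3$ the vanishing of the Cotton tensor, and in dimension $2$ it is automatic; in each case the relevant obstruction tensor is invariant under constant rescalings (for the Cotton tensor, because the Schouten tensor and $\nabla$ both are), so it vanishes for all $\gpw$ and hence for $\hpw$. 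For statement 1, $\mathrm{Ric}=\lambda g$: here $\mathrm{Ric}_{\gpw}=\mathrm{Ric}_{g_{\scriptscriptstyle\vep}}=\lambda g_{\scriptscriptstyle\vep}=\lambda\vep^{2}\,\gpw$, so $\gpw$ is Einstein with constant $\lambda\vep^{2}$; letting $\vep\to0$ and using continuity of $\mathrm{Ric}$ gives $\mathrm{Ric}_{\hpw}=\lim_{\vep\to0}\lambda\vep^{2}\gpw=0$, i.e.\ $\hpw$ is Ricci-flat.

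The step that requires genuine, if routine, attention\,---\,the main obstacle here\,---\,is the analytic observation, i.e.\ justifying the passage to the limit. From \eqref{newmetric}--\eqref{newcomp2}, the components of $\gpw$ in the coordinates $(\tilde x^{0},\dots,\tilde x^{n})$ are $g_{ab}(\tilde x^{0},\vep^{2}\tilde x^{1},\vep\tilde x^{2},\dots,\vep\tilde x^{n})$ for $a,b\geq2$, together with the entries $0$, $1$, $\vep g_{1b}$, and $\vep^{2}g_{11}$ evaluated at the same shrinking arguments. Differentiation in $\tilde x^{1}$ produces a factor $\vep^{2}$, in $\tilde x^{j}$ with $j\geq2$ a factor $\vep$, and in $\tilde x^{0}$ no factor, so each component and each of its partial derivatives converges uniformly on compact sets as $\vep\to0$, with limit exactly the metric $\hpw$ of \eqref{plw2}; in particular $\gpw\to\hpw$ in $C^{\infty}_{\mathrm{loc}}$. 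Since $\hpw$ is nondegenerate, $\det\gpw$ stays bounded away from $0$ on compacta, and $\mathrm{Rm}$, $\mathrm{Ric}$, the Weyl and Cotton tensors, and $\nabla\mathrm{Rm}$, $\nabla\mathrm{Ric}$ are all rational functions of the $2$-jet (resp.\ $3$-jet) of the metric components with denominators powers of $\det$; hence each of these tensors for $\gpw$ converges to the corresponding one for $\hpw$, which is precisely what licenses passing the identities above to the limit. The same mechanism yields the general ``hereditary'' principle quoted in the introduction: any covariant tensor built universally from $g$, $g^{-1}$, $\mathrm{Rm}$ and its covariant derivatives that is homogeneous under $g\mapsto cg$ and vanishes for $g$ must, by the two steps above, vanish for $\hpw$ as well.
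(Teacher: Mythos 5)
Your argument is correct and follows essentially the same route as the paper's proof: both exploit the homothety $\gpw=g_{\scriptscriptstyle\vep}/\vep^{2}$ together with the invariance (or fixed homogeneity) of the relevant curvature obstructions under constant rescaling, and both justify interchanging the limit $\vep\to0$ with the curvature computation\,---\,you via $C^{\infty}_{\mathrm{loc}}$ convergence of the metric components, the paper via the $\vep$-scaling of the Christoffel symbols, which is the same mechanism. Your explicit treatment of the low-dimensional cases of conformal flatness via the Cotton tensor is a small refinement the paper omits, but it does not change the structure of the argument.
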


\begin{proof}
This is known (see, e.g., \cite{philip,gerochL}), though our presentation differs from these.  Given the constant conformal factor in \eqref{eqn:cm}, the curvature 4-tensor $\text{Rm}$, the Ricci tensor $\text{Ric}$, and the Weyl tensor $\text{W}$ transform as follows (see, e.g., \cite[Theorem~7.30]{Lee}):
\beqa
\label{eqn:list}
\text{Rm}_{\scalebox{0.55}{\text{$h_\vep$}}} = \vep^{-2}\text{Rm}_{\scalebox{0.55}{\text{$g_\vep$}}} \comma \text{Ric}_{\scalebox{0.55}{\text{$h_\vep$}}} = \text{Ric}_{\scalebox{0.55}{\text{$g_\vep$}}} \comma \text{W}_{\scalebox{0.55}{\text{$h_\vep$}}} = \vep^{-2}\text{W}_{\scalebox{0.55}{\text{$g_\vep$}}}.
\eeqa
Our first task is to establish that
\beqa
\label{eqn:list2}
\text{Ric}_{\scalebox{0.55}{\text{$\hpw$}}} = \lim_{\vep\to0}\text{Ric}_{\scalebox{0.55}{\text{$h_\vep$}}} \comma \text{W}_{\scalebox{0.55}{\text{$\hpw$}}} = \lim_{\vep\to0}\text{W}_{\scalebox{0.55}{\text{$h_\vep$}}}  \comma \nabla^{\scalebox{0.3}{PW}}\text{Rm}_{\scalebox{0.55}{\text{$\hpw$}}} = \lim_{\vep\to0}\nabla^{\scalebox{0.55}{\text{$h_\vep$}}}\text{Rm}_{\scalebox{0.55}{\text{$h_\vep$}}}.
\eeqa
Indeed, an examination of \eqref{newmetric}-\eqref{newcomp2}, together with the inverse of $\gpw$,\footnote{The inverse of $\gpw$ is of the form
\beqa
\big((\gpw)^{ij}\big) = 
    \begin{pmatrix}
        \vep^2f^{00} & 1  & \vep f^{02} & \vep f^{03} & \cdots & \vep f^{0n}\\
        1 & 0 & 0 & 0 & \cdots & 0\\
        \vep f^{02} & 0 &  h^{22} & h^{23} & \cdots & h^{2n}\\
        \vep f^{03} & 0 &  h^{32} & h^{33} & \cdots & h^{3n}\\
        \vdots & \vdots & \vdots & \vdots & \ddots &  \vdots\\
        \vep f^{0n} & 0 & h^{n2} & h^{n3} & \cdots & h^{nn}
      \end{pmatrix},\nonumber
\eeqa
where each $f^{0i}$ is a certain smooth function of the $g_{ij},h_{ij}$'s and where the submatrix $(h^{ij})$ is the inverse of $h$.  Every component other than the two instances of $1$ has $\text{det}\,h$ for its denominator.} yields that every Christoffel symbol $\Gamma^i_{jk}$ takes the form
\beqa
\label{eqn:gamma*}
\Gamma^i_{jk} = \vep^{\ell}\gamma^i_{jk}(\tilde{x}^0,\vep^2\tilde{x}^1,\vep\,\tilde{x}^2,\dots,\vep\,\tilde{x}^n) \comma 0 \leq \ell \leq 4,
\eeqa
for some smooth function $\gamma^i_{jk}(\tilde{x}^0,\vep^2\tilde{x}^1,\vep\,\tilde{x}^2,\dots,\vep\,\tilde{x}^n)$; furthermore, $\ell = 0$ in \eqref{eqn:gamma*} only for $\Gamma^1_{ij}, \Gamma^i_{0j}$ with $i,j =2,\dots,n$, and these particular Christoffel symbols involve derivatives by $\tilde{x}^0$ only.  By the chain rule, any partial derivatives of the functions $\gamma^i_{jk}(\tilde{x}^0,\vep^2\tilde{x}^1,\vep\,\tilde{x}^2,\dots,\vep\,\tilde{x}^n)$ by $\tilde{x}^1,\dots,\tilde{x}^n$ will vanish in the limit $\vep \to 0$,
\beqa
\lim_{\vep \to 0}\frac{\partial\gamma^i_{jk}(\tilde{x}^0,\vep^2\tilde{x}^1,\vep\,\tilde{x}^2,\dots,\vep\,\tilde{x}^n)}{\partial \tilde{x}^{1,2,\dots,n}} \!\!&=&\!\!\nonumber\\
&&\hspace{-1.5in}\lim_{\vep \to 0}\underbrace{\,\vep^{\ell}\,}_{\ell\,=\,1,2} \frac{\partial\gamma^i_{jk}(\tilde{x}^0,\tilde{x}^1,\tilde{x}^2,\dots,\tilde{x}^n)}{\partial \tilde{x}^{1,2,\dots,n}}\Bigg|_{(\tilde{x}^0,\vep^2\tilde{x}^1,\vep\,\tilde{x}^2,\dots,\vep\,\tilde{x}^n)} = 0,\nonumber
\eeqa
it follows that only the Christoffel symbols $\Gamma^1_{ij}, \Gamma^i_{0j}$, their products, and their partial derivatives by $\tilde{x}^0$ only, will remain after the limit $\vep \to 0$ is taken.  In any case, the following is clear:
\begin{enumerate}[leftmargin=*]
\item[i.] One can take the limit $\lim_{\vep \to 0} \gpw$ to obtain $h_{\scalebox{.4}{PW}}$, and \emph{then} compute $\text{Rm}_{\scalebox{0.55}{\text{$\hpw$}}}, \text{W}_{\scalebox{0.55}{\text{$\hpw$}}}$, and $\text{Ric}_{\scalebox{0.55}{\text{$\hpw$}}}$ directly from $h_{\scalebox{.4}{PW}}$,
\item[ii.] Or one can compute $\text{Rm}_{\scalebox{0.55}{\text{$h_\vep$}}}, \text{W}_{\scalebox{0.55}{\text{$h_\vep$}}}$, and $\text{Ric}_{\scalebox{0.55}{\text{$h_\vep$}}}$ from the outset, and \emph{then} take their limits as $\vep \to 0$;
\end{enumerate}
by the smooth dependence on $\vep$, one obtains the same result either way.  Having established this, suppose now that $g_{\scriptscriptstyle \vep}$ is an Einstein metric.  Then the fact that
$$
\text{Ric}_{\scalebox{0.55}{\text{$h_\vep$}}} \overset{\eqref{eqn:list}}{=} \text{Ric}_{\scalebox{0.55}{\text{$g_\vep$}}} = \lambda g_{\scriptscriptstyle \vep} \overset{\eqref{eqn:cm}}{=} \lambda(\vep^2 h_{\scriptscriptstyle \vep})
$$
directly yields
$$
\text{Ric}_{\scalebox{0.55}{\text{$\hpw$}}} \overset{\eqref{eqn:list2}}{=} \lim_{\vep \to 0} \text{Ric}_{\scalebox{0.55}{\text{$h_\vep$}}} = \lambda(0\cdot \hpw) = 0,
$$
thus proving 1. above.   Next, if $g_{\scriptscriptstyle \vep}$ is locally conformally flat, so that $\text{W}_{\scalebox{0.55}{\text{$g_\vep$}}} = 0$, then
$$
\text{W}_{\scalebox{0.55}{\text{$h_\vep$}}} \overset{\eqref{eqn:list}}{=} \frac{\text{W}_{\scalebox{0.55}{\text{$g_\vep$}}}}{\vep^2} = 0 \imp \text{W}_{\scalebox{0.55}{\text{$\hpw$}}} \overset{\eqref{eqn:list2}}{=} \lim_{\vep\to0}\text{W}_{\scalebox{0.55}{\text{$h_\vep$}}} = 0.
$$
Finally, if $g_{\scriptscriptstyle \vep}$ is locally symmetric, so that $\nabla \text{Rm}_{\scalebox{0.55}{\text{$g_\vep$}}} = 0$, then
$$
\nabla^{{\scalebox{0.55}{\text{$h_\vep$}}}} \text{Rm}_{\scalebox{0.55}{\text{$h_\vep$}}} \overset{\eqref{eqn:list}}{=} \frac{\nabla \text{Rm}_{\scalebox{0.55}{\text{$g_\vep$}}}}{\vep^2} = 0\imp \nabla^{\scalebox{0.3}{PW}} \text{Rm}_{\scalebox{0.55}{\text{$\hpw$}}} \overset{\eqref{eqn:list2}}{=} \lim_{\vep\to0} \nabla^{{\scalebox{0.55}{\text{$h_\vep$}}}} \text{Rm}_{\scalebox{0.55}{\text{$h_\vep$}}} = 0.
$$
Likewise if $g$ has parallel Ricci tensor.
\end{proof}

Note that the curvature properties mentioned in Proposition \ref{thm:hereditary} are not the only ones preserved in the limit; indeed, so are any other tensors that vanish and are constructed from $\nabla, \text{Rm}$, and traces (or irreducible components) of the latter. For a more thorough discussion of this fact, consult \cite{gerochL}.

\subsection{The plane wave limit of a Riemannian manifold}
\label{sec:pp2}
Now that we have a firm understanding of Penrose's plane wave limit, we can make use of Proposition \ref{thm:hereditary} to arrive at results purely in Riemannian geometry.  To set the stage, let $(M,\bar{g})$ be a Riemannian $n$-manifold, and recall the existence of \emph{semigeodesic coordinates}, namely, at each point of $M$, local coordinates $(r,x^2,\dots,x^n)$ exist in which the integral curves $s \mapsto (s,x^2,\dots,x^n)$ of $\partial_r$ are geodesics normal to the level sets of $r$:
\beqa
\label{eqn:sg0}
\bar{g} = (dr)^2 + \bar{g}_{ij}(r,x^2,\dots,x^n)dx^idx^j.
\eeqa
It follows that $\partial_r = \text{grad}_{\scalebox{.5}{$\bar{g}$}}r$ and that $r$ is a local distance function (see, e.g., \cite[Proposition~6.41,~p.~182]{Lee}).  Choose such coordinates in a neighborhood $\uu \subseteq M$, and henceforth restrict $\bar{g}$ to $\uu$.  In order to take the ``plane wave limit of $(\uu,\bar{g}|_{\uu})$," we will need to suitably embed it in a Lorentzian manifold.  We do so in the simplest way possible, by embedding $(\uu,\bar{g}|_{\uu})$ as a ``time-symmetric" hypersurface in the $(n+1)$-dimensional Lorentzian manifold 
\beqa
\label{eqn:lor0}
\RR \times \uu \comma g \defeq -(dt)^2 + \bar{g}.
\eeqa
In the coordinate basis $\{\partial_t,\partial_r,\partial_2,\dots,\partial_n\}$, this Lorentzian metric is
\beqa
    (g_{\alpha\beta}) = \begin{pmatrix}
        -1 & 0  & 0 & 0 & \cdots & 0\\
        0 & 1& 0 & 0 & \cdots & 0\\
        0 & 0&  \bar{g}_{22} & \bar{g}_{23} & \cdots & \bar{g}_{2n}\\
        0 & 0 &  \bar{g}_{32} & \bar{g}_{33} & \cdots & \bar{g}_{3n}\\
        \vdots & \vdots & \vdots & \vdots & \ddots &  \vdots\\
        0 & 0 & \bar{g}_{n2} & \bar{g}_{n3} & \cdots & \bar{g}_{nn}
      \end{pmatrix}\cdot\nonumber
\eeqa
In what follows, we reserve the Latin indices $i,j,k$ for the coordinates $x^2,\dots,x^n$ and Greek indices $\alpha, \beta$ for $t,r,x^2,\dots,x^n$.  The reason for this choice of embedding is the direct relationship between the curvatures of $\bar{g}$ and $g$:

\begin{lemma}
\label{lemma:1}
If $\bar{g}$ in \eqref{eqn:sg0} is locally symmetric, then so is $g$ in \eqref{eqn:lor0}.  If $\bar{g}$ has parallel Ricci tensor, then so does $g$.  Finally, if $\bar{g}$ is Einstein, $\emph{\text{Ric}}_{\scriptscriptstyle \bar{g}} = \lambda \bar{g}$, then $\emph{\text{Ric}}_{\scriptscriptstyle g} = \lambda (g+(dt)^2)$.
\end{lemma}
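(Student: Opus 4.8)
The plan is to exploit the fact that $g = -(dt)^2 + \bar{g}$ in \eqref{eqn:lor0} is a semi-Riemannian product of the flat line $(\RR,-(dt)^2)$ with $(\uu,\bar{g})$, so that every curvature quantity that enters the statement block-decomposes with the $\RR$-factor contributing nothing. First I would record that the Levi-Civita connection $\nabla$ of $g$ is the product connection. Since in the basis $\{\partial_t,\partial_r,\partial_2,\dots,\partial_n\}$ the matrix $(g_{\alpha\beta})$ is block diagonal with the constant entry $-1$ in the $t$-slot and a $\bar{g}$-block that does not depend on $t$, a direct inspection of $\Gamma^\gamma_{\alpha\beta} = \tfrac12 g^{\gamma\delta}(\partial_\alpha g_{\beta\delta}+\partial_\beta g_{\alpha\delta}-\partial_\delta g_{\alpha\beta})$ shows that every Christoffel symbol carrying a $t$ index vanishes, while the remaining symbols $\Gamma^k_{ij}$ (indices among $r,x^2,\dots,x^n$) coincide with those of $\bar{g}$. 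Equivalently, $\nabla_{\partial_t}$ annihilates every coordinate field and $\nabla_X Y = \bar{\nabla}_X Y$ for $X,Y$ tangent to $\uu$, where $\bar{\nabla}$ is the Levi-Civita connection of $\bar{g}$.

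From this the curvature tensor $\text{Rm}_g$ of $g$ has the following shape: a component vanishes as soon as one of its four entries equals $\partial_t$ (because $\nabla_{\partial_t}$ kills everything and every $\bar{g}$-built quantity is $t$-independent), and on entries tangent to $\uu$ one has $\text{Rm}_g = \text{Rm}_{\bar{g}}$. Applying the same reasoning one level higher shows that a component of $\nabla\text{Rm}_g$ vanishes whenever any of its five entries is $\partial_t$, while on purely spatial entries $\nabla\text{Rm}_g = \bar{\nabla}\,\text{Rm}_{\bar{g}}$; hence $\bar{\nabla}\,\text{Rm}_{\bar{g}}=0$ forces $\nabla\text{Rm}_g=0$, which is the first claim. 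Tracing gives likewise $\text{Ric}_g(\partial_t,\cdot)=0$ and $\text{Ric}_g(X,Y) = \text{Ric}_{\bar{g}}(X,Y)$ for $X,Y$ tangent to $\uu$, and one further covariant derivative yields $\nabla\text{Ric}_g=0$ the moment $\bar{\nabla}\,\text{Ric}_{\bar{g}}=0$, the second claim. For the third claim, if $\text{Ric}_{\bar{g}}=\lambda\bar{g}$ then this description reads $\text{Ric}_g(\partial_t,\cdot)=0$ and $\text{Ric}_g=\lambda\bar{g}$ on $\uu$; since as a $(0,2)$-tensor on $\RR\times\uu$ one has $g+(dt)^2 = \bar{g}$ (the degenerate tensor annihilating $\partial_t$ and agreeing with $\bar{g}$ on the slice), this is exactly $\text{Ric}_g = \lambda(g+(dt)^2)$.

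Alternatively one can bypass coordinates and simply quote the standard expressions for the curvature tensor, the Ricci tensor, and $\nabla\text{Rm}$ of a semi-Riemannian product (e.g. O'Neill), observing that the line factor is flat. There is no real obstacle in either route; the only point that deserves care is the Einstein case, where $g$ is emphatically \emph{not} Einstein unless $\lambda=0$, so one must resist writing $\text{Ric}_g=\lambda g$ and instead identify the correct tensor $\lambda(g+(dt)^2)$ --- this is the precise sense in which the Einstein condition is ``only slightly altered'' by the time-symmetric embedding. A secondary bookkeeping point is to check that the mixed components of $\nabla\text{Rm}_g$ and $\nabla\text{Ric}_g$ (those with a $\partial_t$ entry), and not merely the fully spatial ones, genuinely vanish; this follows because $\nabla_{\partial_t}$ is trivial and every tensor built from $\bar{g}$ is independent of $t$.
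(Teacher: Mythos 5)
Your proposal is correct and takes essentially the same route as the paper: both exploit the fact that $g=-(dt)^2+\bar{g}$ is a semi-Riemannian product with a flat, one-dimensional factor, so that the Levi-Civita connection and all curvature quantities block-decompose with every $\partial_t$-component vanishing (the paper simply cites O'Neill for the product-connection facts that you derive from the Christoffel symbols). Your identification of $\lambda(g+(dt)^2)$ with $\lambda\bar{g}$ in the Einstein case is exactly how the paper concludes as well.
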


\begin{proof}
Let $X,Y,Z$ be the lifts to $\RR\times \uu$ of vector fields on $\uu$, and let $\nabla^{\scriptscriptstyle g}, R_{\scriptscriptstyle g}$ and $\nabla^{\scriptscriptstyle \bar{g}}, R_{\scriptscriptstyle \bar{g}}$ denote the Levi-Civita connections and curvature endomorphisms of $g$ and $\bar{g}$, respectively.  Then
$\nabla^{\scriptscriptstyle g}_{\!X}{Y} = \nabla^{\scriptscriptstyle \bar{g}}_{\!X}{Y}$, while $\nabla^{\scriptscriptstyle g}_{\!X}{\partial_t} = \nabla^{\scriptscriptstyle g}_{\!\partial_t}X = \nabla^{\scriptscriptstyle g}_{\!\partial_t}\partial_t = 0$; see, e.g., \cite[pp.~206ff.]{o1983}.  Thus
$$
R_{\scriptscriptstyle g}(X,Y)Z = R_{\scriptscriptstyle \bar{g}}(X,Y)Z \comma R_{\scriptscriptstyle g}(\cdot,\cdot)\partial_t = R_{\scriptscriptstyle g}(\cdot,\partial_t)\cdot = 0.
$$
It follows that if $\bar{g}$ is locally symmetric, $\nabla^{\scriptscriptstyle \bar{g}}\text{Rm}_{\scriptscriptstyle \bar{g}} = 0$, then so is $g$, and if $\bar{g}$ has parallel Ricci tensor, $\nabla^{\scriptscriptstyle \bar{g}}\text{Ric}_{\scriptscriptstyle \bar{g}} = 0$, then so does $g$.  Similarly,
$$
\label{eqn:Ric0}
\text{Ric}_{\scriptscriptstyle g}(X,Y) = \text{Ric}_{\scriptscriptstyle \bar{g}}(X,Y) = \lambda \bar{g} \comma \text{Ric}_{\scriptscriptstyle g}(\partial_t,\cdot) = 0,
$$
so that the equality $\text{Ric}_{\scriptscriptstyle g} = \lambda (g+(dt)^2)$ holds.
\end{proof}

Our goal now is to take the plane wave limit of \eqref{eqn:lor0}, keeping track of the Riemannian metric $\bar{g}$ embedded within, and then apply Proposition \ref{thm:hereditary}. In order to do this, we must first express the Lorentzian metric \eqref{eqn:lor0} in null coordinates \eqref{nc}.  The simplest way to do so is to change coordinates
$
(t,r,x^2,\dots,x^n) \mapsto (x^0,x^1,x^2,\dots,x^n)
$
via
\beqa
\label{eqn:vu2}
x^0 \defeq \frac{1}{\sqrt{2}}(r+t) \comma x^1 \defeq \frac{1}{\sqrt{2}}(r-t),
\eeqa
with respect to which $g$ will take the form
$$
\label{eqn:lor11}
    (g_{\alpha\beta}) = \underbrace{\,\begin{pmatrix}
        0 & 1  & 0 & 0 & \cdots & 0\\
        1 & 0& 0 & 0 & \cdots & 0\\
        0 & 0&  \bar{g}_{22} & \bar{g}_{23} & \cdots &  \bar{g}_{2n}\\
        0 & 0 &  \bar{g}_{32} & \bar{g}_{33} & \cdots & \bar{g}_{3n}\\
        \vdots & \vdots & \vdots & \vdots & \ddots &  \vdots\\
        0 & 0 & \bar{g}_{n2} & \bar{g}_{n3} & \cdots & \bar{g}_{nn}
      \end{pmatrix}\,}_{(g_{\alpha\beta})~\text{in coordinates}~(x^0,x^1,x^2,\dots,x^n)} \commas \bar{g}_{ij}\big((x^0+x^1)/\sqrt{2},x^2,\dots,x^n\big).\nonumber
$$
Note that the null gradient vector field $\partial_0$ giving rise to these coordinates is simply
$
\partial_0 = \text{grad}\,x^1 =\frac{1}{\sqrt{2}}(\partial_r+\partial_t).
$
Now we can take the plane wave limit of $g$. Indeed, setting $g_{\scriptscriptstyle \vep} = (\varphi_{\scriptscriptstyle \vep}^{-1})^*g$ as in \eqref{eqn:gg_vep} and bearing in mind that, with the coordinate transformation \eqref{eqn:tilde}, the components $\bar{g}_{ij}$ take the form
\beqa
\label{eqn:bar_g_ij}
\bar{g}_{ij}\big((\tilde{x}^0+\vep^2\tilde{x}^1)/\sqrt{2},\vep\tilde{x}^2,\dots,\vep\tilde{x}^n\big),
\eeqa
it follows that the metric $\gpw$ given by \eqref{newmetric} is
$$
    (({\gpw})_{\alpha\beta}) = \underbrace{\,\begin{pmatrix}
        0 & 1  & 0 & 0 & \cdots & 0\\
        1 & 0& 0 & 0 & \cdots & 0\\
        0 & 0&  \bar{g}_{22} & \bar{g}_{23} & \cdots &  \bar{g}_{2n}\\
        0 & 0 &  \bar{g}_{32} & \bar{g}_{33} & \cdots & \bar{g}_{3n}\\
        \vdots & \vdots & \vdots & \vdots & \ddots &  \vdots\\
        0 & 0 & \bar{g}_{n2} & \bar{g}_{n3} & \cdots & \bar{g}_{nn}
      \end{pmatrix}\,}_{\text{defined in coordinates}~(\tilde{x}^0,\tilde{x}^1,\tilde{x}^2,\dots,\tilde{x}^n)} \commas \text{$\bar{g}_{ij}$ given by \eqref{eqn:bar_g_ij}}.\nonumber
$$
Recall that this metric is conformal to $g_{\scriptscriptstyle \vep}$.  Taking the limit $\vep \to 0$ yields
\beqa
\label{eqn:pw2}
\hpw = \begin{pmatrix}
        0 & 1  & 0 & \cdots & 0\\
        1 & 0 & 0 & \cdots & 0\\
        0 & 0 &  g_{22}(\tilde{x}^0/\sqrt{2},0,\dots,0) & \cdots &  g_{2n}(\tilde{x}^0/\sqrt{2},0,\dots,0)\\
        0 & 0 &  g_{32}(\tilde{x}^0/\sqrt{2},0,\dots,0) & \cdots & g_{3n}(\tilde{x}^0/\sqrt{2},0,\dots,0)\\
        \vdots & \vdots & \vdots & \ddots &  \vdots\\
        0 & 0 & g_{n2}(\tilde{x}^0/\sqrt{2},0,\dots,0) & \cdots & g_{nn}(\tilde{x}^0/\sqrt{2},0,\dots,0)
      \end{pmatrix}\cdot\nonumber
\eeqa
A final, cosmetic relabeling of coordinates,
\beqa
\label{eqn:lastchange}
(\tilde{x}^0,\tilde{x}^1,\tilde{x}^i) \mapsto (\sqrt{2}\tilde{r},\tilde{t}/\sqrt{2},\tilde{x}^i),
\eeqa
puts $\hpw$ into a form more directly resembling our original Riemannian metric $\bar{g}$:
\beqa
\label{eqn:better}
\hpw = 2d\tilde{r}d\tilde{t}+g_{ij}(\tilde{r},0,\dots,0)d\tilde{x}^id\tilde{x}^j.
\eeqa
The following constitutes the fruits of our labors, connecting geometric properties of $\bar{g}$ with those of (the much simpler metric) $\hpw$:

\begin{prop}
\label{prop:einstein}
Let $\bar{g}$ be a Riemannian metric expressed locally in any choice of semgeodesic coordinates \eqref{eqn:sg0}. Let $h_{\scalebox{0.4}{\emph{PW}}}$ in \eqref{eqn:better} be its corresponding Lorentzian plane wave metric.  Then the following is true\emph{:}
\begin{enumerate}[leftmargin=*]
\item[\emph{1}.] If $\bar{g}$ is Einstein, $\emph{\text{Ric}}_{\scriptscriptstyle \bar{g}} = \lambda \bar{g}$, then $h_{\scalebox{0.4}{\emph{PW}}}$ satisfies $\emph{\text{Ric}}_{\scalebox{0.55}{$h_{\scalebox{.4}{\emph{PW}}}$}} = \lambda (d\tilde{r})^2$.
\item[\emph{2}.] If $\bar{g}$ is locally symmetric, then so is $h_{\scalebox{0.4}{\emph{PW}}}$.
\item[\emph{3}.] If $\bar{g}$ has parallel Ricci tensor, then so does $h_{\scalebox{0.4}{\emph{PW}}}$.
\end{enumerate}
\end{prop}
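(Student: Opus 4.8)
The plan is to chain together the two main tools already assembled: Lemma \ref{lemma:1}, which lifts curvature properties of $\bar g$ to the Lorentzian product $g = -(dt)^2 + \bar g$, and Proposition \ref{thm:hereditary}, which passes those same properties down to the plane wave limit $\hpw$. The entire construction of Section \ref{sec:pp2}\,---\,the passage to null coordinates \eqref{eqn:vu2}, the rescaling \eqref{eqn:tilde}, the limit $\vep \to 0$, and the cosmetic relabeling \eqref{eqn:lastchange}\,---\,has already verified that the plane wave limit of $g$ is precisely the metric $\hpw$ displayed in \eqref{eqn:better}. So the proof is essentially a bookkeeping exercise: for each of the three hypotheses on $\bar g$, I would invoke Lemma \ref{lemma:1} to conclude the corresponding statement for $g$, then invoke the matching item of Proposition \ref{thm:hereditary} to conclude it for $\hpw$.

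Concretely, for item 2 (locally symmetric): if $\bar g$ satisfies $\nabla^{\bar g}\text{Rm}_{\bar g} = 0$, then by Lemma \ref{lemma:1}, $g = -(dt)^2 + \bar g$ satisfies $\nabla^{g}\text{Rm}_{g} = 0$; since $g$ is written in null coordinates of the form \eqref{nc} with plane wave limit $\hpw$, item 3 of Proposition \ref{thm:hereditary} gives $\nabla^{\scriptscriptstyle \hpw}\text{Rm}_{\hpw} = 0$. Item 3 (parallel Ricci) is identical, using the parallel-Ricci clauses of both results. Item 1 (Einstein) requires one extra remark: Lemma \ref{lemma:1} tells us $g$ is \emph{not} Einstein but rather satisfies $\text{Ric}_{g} = \lambda(g + (dt)^2)$, so Proposition \ref{thm:hereditary} as stated (whose Einstein clause assumes $\text{Ric} = \lambda g$) does not apply verbatim. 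Here I would instead argue directly from the structure of the limit: the hereditary argument in the proof of Proposition \ref{thm:hereditary} shows that $\text{Ric}_{\hpw} = \lim_{\vep \to 0}\text{Ric}_{h_\vep} = \lim_{\vep \to 0}\text{Ric}_{g_\vep}$ (the Ricci tensor is conformally invariant under the constant factor $\vep^{-2}$ in the $(0,2)$ slot, cf.\ \eqref{eqn:list}), and that only the surviving Christoffel symbols contribute in the limit. Since $\text{Ric}_g = \lambda(g + (dt)^2)$ and in the null coordinates $(x^0,x^1,\dots)$ the tensor $g + (dt)^2$ becomes (twice) $dx^1 \otimes dx^1$\,---\,the only block that is \emph{not} rescaled to zero in \eqref{newmetric}-\eqref{newcomp2}\,---\,one gets $\text{Ric}_{\hpw} = \lambda\, dx^1 \otimes dx^1$, which under the relabeling \eqref{eqn:lastchange} is exactly $\lambda(dr)^2$.

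The main obstacle, such as it is, is precisely this mismatch in item 1: one must be careful that the $(dt)^2$ correction term in Lemma \ref{lemma:1} does not vanish in the plane wave limit the way the other off-diagonal and rescaled blocks do, but instead survives as the $(dr)^2$ term. Tracking this requires re-examining which metric components in \eqref{newmetric} carry factors of $\vep$: the $g_{11}$ component (here the coefficient of $dx^1 \otimes dx^1$) is multiplied by $\vep^2$ in $\gpw$, yet $\text{Ric}_{h_\vep} = \text{Ric}_{g_\vep}$ has no such rescaling, so the $\lambda\, dx^1\otimes dx^1$ piece of $\text{Ric}_{g_\vep}$ passes to the limit undamped while the metric component it is built from does not\,---\,there is no contradiction, since Ricci is not the metric. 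Everything else is a routine application of the two cited results, and I would keep the write-up short accordingly, perhaps merging items 2 and 3 into a single sentence.
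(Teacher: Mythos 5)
Your overall strategy is exactly the paper's: for local symmetry and parallel Ricci you chain Lemma \ref{lemma:1} with Proposition \ref{thm:hereditary}, and for the Einstein case you correctly observe that $g = -(dt)^2+\bar g$ is not Einstein, so the hereditary proposition cannot be quoted verbatim and one must instead take the limit of $\text{Ric}_{\scalebox{0.55}{\text{$h_\vep$}}} = \text{Ric}_{\scalebox{0.55}{\text{$g_\vep$}}} = \lambda(g_{\scriptscriptstyle\vep}+(dt)^2)$ by hand. Items 2 and 3 are fine as written.

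The execution of item 1, however, contains a concrete error. You assert that $g+(dt)^2$ ``becomes (twice) $dx^1\otimes dx^1$'' in null coordinates and that the $dx^1\otimes dx^1$ block is the one surviving the limit. Neither is true. First, $g+(dt)^2 = \bar g = (dr)^2+\bar g_{ij}dx^idx^j$, and since $(dr)^2 = \tfrac12(dx^0+dx^1)\otimes(dx^0+dx^1)$ this tensor has nonzero $00$, $01$, $11$ and $ij$ components. Second, under the rescaling \eqref{eqn:tilde} one has $dx^1 = \vep^2\,d\tilde x^1$ and $dx^i = \vep\,d\tilde x^i$, so for a $(0,2)$-tensor that is \emph{not} divided by $\vep^2$ (such as $\text{Ric}_{\scalebox{0.55}{\text{$g_\vep$}}}$, by \eqref{eqn:list}) every component \emph{except} the $d\tilde x^0\otimes d\tilde x^0$ one acquires a positive power of $\vep$ and vanishes in the limit; the $dx^1\otimes dx^1$ piece in particular picks up $\vep^4$. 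The correct computation is $\text{Ric}_{\scalebox{0.55}{\text{$g_\vep$}}} = \lambda\big(\vep^2\gpw+\tfrac12(d\tilde x^0-\vep^2 d\tilde x^1)\otimes(d\tilde x^0-\vep^2 d\tilde x^1)\big)\to \tfrac{\lambda}{2}\,d\tilde x^0\otimes d\tilde x^0$, which equals $\lambda(dr)^2$ because \eqref{eqn:lastchange} sends $\tilde x^0\mapsto\sqrt2\,r$ (and $\tilde x^1\mapsto t/\sqrt2$, so your $d\tilde x^1\otimes d\tilde x^1$ would instead have produced a $(dt)^2$ term, which does not appear in the statement). The conclusion you reach is correct, but the mechanism you describe for reaching it is backwards; replacing it with the two-line limit above, which is what the paper does, repairs the proof.
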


\begin{proof}
We work in the coordinates \eqref{eqn:tilde}, with $g_{\scriptscriptstyle \vep}$ in place of $g$ in \eqref{eqn:lor0}.  If $\text{Ric}_{\scriptscriptstyle \bar{g}} = \lambda \bar{g}$, then by Lemma \ref{lemma:1}, $\text{Ric}_{\scalebox{0.55}{\text{$g_\vep$}}} = \lambda(g_{\scriptscriptstyle \vep}+(dt)^2)$.  Thus, recalling that
$$
t \overset{\eqref{eqn:vu2}}{=} \frac{1}{\sqrt{2}}(x^0-x^1) \overset{\eqref{eqn:tilde}}{=}  \frac{1}{\sqrt{2}}(\tilde{x}^0-\vep^2\tilde{x}^1),
$$
we have that
$$
\text{Ric}_{\scalebox{0.55}{\text{$h_\vep$}}} \overset{\eqref{eqn:list}}{=} \text{Ric}_{\scalebox{0.55}{\text{$g_\vep$}}} = \lambda\big(g_{\scriptscriptstyle \vep}+(dt)^2\big)
\overset{\eqref{eqn:cm}}{=} \lambda \Big(\vep^2 \gpw+\frac{1}{2}d(\tilde{x}^0-\vep^2\tilde{x}^1)^2\Big),
$$
so that
$$
\text{Ric}_{\scalebox{0.55}{\text{$\hpw$}}} \overset{\eqref{eqn:list2}}{=}\lim_{\vep\to0}\text{Ric}_{\scalebox{0.55}{\text{$h_\vep$}}} = \lambda\cdot0\cdot\hpw + \frac{\lambda}{2}(d\tilde{x}^0)^2 \overset{\eqref{eqn:lastchange}}{=} \lambda (d\tilde{r})^2.
$$
For the remaining cases, we may rely directly on Proposition \ref{thm:hereditary}.  Indeed, if $g$ is locally symmetric, then by Lemma \ref{lemma:1} so is $(\RR \times  \uu,\gL)$, so that
$$
\nabla^{\scalebox{0.3}{PW}}\text{Rm}_{\scalebox{0.4}{PW}} \overset{\eqref{eqn:list}}{=} \vep^{-2} (\nabla^{\scalebox{0.55}{\text{$h_\vep$}}}\text{Rm}_{\scalebox{0.55}{\text{$h_\vep$}}}) = \vep^{-2}\cdot 0 = 0,
$$
where $\nabla^{\scalebox{0.3}{PW}} = \nabla^{\scalebox{0.55}{\text{$h_\vep$}}}$ because the conformal factor is a constant; likewise if $g$ has parallel Ricci tensor.  Taking the limit $\vep \to 0$, the proof is complete.
\end{proof}
(Henceforth we drop the ``$\sim$"'s in \eqref{eqn:better} for convenience.)  To illustrate the Einstein case, consider on $\RR^n = \{(r,x^2,\dots,x^n)\}$ the Riemannian metric $\bar{g}$ given by
$$
\bar{g} \defeq (dr)^2+ \sum_{i=2}^ne^{\sqrt{2}r}(dx^i)^2
$$
This is an Einstein metric, with Einstein constant $\lambda = -\frac{n-1}{2}$. Its corresponding plane wave limit \eqref{eqn:better} on $\RR^{n+1} = \{(r,t,x^2,\dots,x^n)\}$,
$$
\hpw = 2drdt+\sum_{i=2}^ne^{\sqrt{2}r}(dx^i)^2,
$$
is easily verified to satisfy $\text{Ric}_{\scalebox{0.55}{\text{$\hpw$}}} = -\frac{n-1}{2}(dr)^2$.  Moving on, as we show in Proposition \ref{thm:obst} below, Proposition \ref{prop:einstein} takes on a more consequential form when the Riemannian metrics \eqref{eqn:sg0} are thought of as defining a kind of ``unfolding" over Lorentzian plane wave metrics, as this allows one to make statements about entire classes of Riemannian metrics.  Here is the precise of definition:

\begin{defn}
\label{def:ext}
Let $(\bar{g}_{ij}(r))_{i,j=2,\dots,n}$ be a symmetric, positive-definite matrix of smooth functions $\bar{g}_{ij}(r)$.  Let $\mathscr{F}_{\scalebox{0.5}{\emph{$\bar{g}$}}}$ denote the set of all Riemannian metrics $g_{\scalebox{0.4}{R}}$ on $\RR^{n} = \{(r,x^2,\dots,x^n)\}$ of the form
$$
g_{\scalebox{0.4}{R}} \defeq (dr)^2 + g_{ij}(r,x^2,\dots,x^n)dx^idx^j \comma g_{ij}(r,0,\dots,0) = \bar{g}_{ij}(r).
$$
Any $g_{\scalebox{0.4}{R}} \in \mathscr{F}_{\scalebox{0.5}{\emph{$\bar{g}$}}}$ will be called a \emph{Riemannian extension of $(\bar{g}_{ij}(r))$}.
\end{defn}

\begin{prop}
\label{thm:obst}
On $\RR^{n+1} = \{(r,t,x^2,\dots,x^n)\}$, define the plane wave
\beqa
\label{eqn:Rosen}
h_{\scalebox{.4}{\emph{PW}}} = \begin{pmatrix}
        0 & 1  & 0 & \cdots & 0\\
        1 & 0 & 0 & \cdots & 0\\
        0 & 0 & \bar{g}_{22}(r) & \cdots & \bar{g}_{2n}(r)\\
        0 & 0 & \bar{g}_{32}(r) & \cdots & \bar{g}_{3n}(r)\\
        \vdots & \vdots & \vdots & \ddots &  \vdots\\
        0 & 0 & \bar{g}_{n2}(r) & \cdots & \bar{g}_{nn}(r)
      \end{pmatrix},
 \eeqa
with $(\bar{g}_{ij}(r))$ as in Definition \ref{def:ext}, and let $\mathscr{F}_{\scalebox{0.5}{\emph{$\bar{g}$}}}$ denote the set of all Riemannian extensions $g_{\scalebox{0.4}{R}}$ of $(\bar{g}_{ij}(r))$.  If $h_{\scalebox{.4}{\emph{PW}}}$ is not Ricci-flat, then neither is any $g_{\scalebox{0.4}{R}} \in \mathscr{F}_{\scalebox{0.5}{\emph{$\bar{g}$}}}$.  If $h_{\scalebox{.4}{\emph{PW}}}$ is not locally symmetric, then neither is any $g_{\scalebox{0.4}{R}} \in \mathscr{F}_{\scalebox{0.5}{\emph{$\bar{g}$}}}$. If $h_{\scalebox{.4}{\emph{PW}}}$ does not have parallel Ricci tensor, then neither will any $g_{\scalebox{0.4}{R}} \in \mathscr{F}_{\scalebox{0.5}{\emph{$\bar{g}$}}}$.
\end{prop}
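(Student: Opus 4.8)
The plan is to read the proposition off of Proposition~\ref{prop:einstein}, the one new ingredient being the observation that \emph{every} Riemannian extension $g_{\scalebox{0.4}{R}}\in\mathscr{F}_{\scalebox{0.5}{\emph{$\bar{g}$}}}$ has the \emph{same} plane wave limit, namely the metric $h_{\scalebox{.4}{\emph{PW}}}$ displayed in \eqref{eqn:Rosen}. Granting that, each of the three assertions is simply the contrapositive of the corresponding item of Proposition~\ref{prop:einstein}.

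First I would fix an arbitrary $g_{\scalebox{0.4}{R}}\in\mathscr{F}_{\scalebox{0.5}{\emph{$\bar{g}$}}}$. By Definition~\ref{def:ext} it is already presented in the semigeodesic form \eqref{eqn:sg0} (its defining expression furnishes global semigeodesic coordinates on $\RR^{n}$), with components $g_{ij}(r,x^2,\dots,x^n)$ subject to the normalization $g_{ij}(r,0,\dots,0)=\bar{g}_{ij}(r)$. Hence Proposition~\ref{prop:einstein} applies verbatim to $g_{\scalebox{0.4}{R}}$ (in the role of ``$\bar{g}$'' there), and by \eqref{eqn:better} its plane wave limit is
\[
2\,dr\,dt+g_{ij}(r,0,\dots,0)\,dx^i dx^j \;=\; 2\,dr\,dt+\bar{g}_{ij}(r)\,dx^i dx^j,
\]
which is exactly the metric whose component matrix, in the coordinate order $(r,t,x^2,\dots,x^n)$, is the one displayed in \eqref{eqn:Rosen}. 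The key point is thus that the plane wave limit forgets the transverse ($x^2,\dots,x^n$) dependence of the $g_{ij}$ entirely: it depends only on the restriction $g_{ij}(r,0,\dots,0)=\bar{g}_{ij}(r)$, and so it is one and the same $h_{\scalebox{.4}{\emph{PW}}}$ for all $g_{\scalebox{0.4}{R}}\in\mathscr{F}_{\scalebox{0.5}{\emph{$\bar{g}$}}}$. (Conceptually, Penrose's limit records only the behavior of the metric along the distinguished null geodesic, which in these coordinates is the $\tilde{x}^0$-axis; the transverse coordinates are rescaled by $\vep$ in \eqref{eqn:tilde} and pushed to infinity as $\vep\to0$.)

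Now I would run the three contrapositives. If some $g_{\scalebox{0.4}{R}}\in\mathscr{F}_{\scalebox{0.5}{\emph{$\bar{g}$}}}$ were Ricci-flat, then it would be Einstein with $\lambda=0$, so part~1 of Proposition~\ref{prop:einstein} would give that the Ricci tensor of $h_{\scalebox{.4}{\emph{PW}}}$ equals $0\cdot(dr)^2=0$ --- i.e.\ that $h_{\scalebox{.4}{\emph{PW}}}$ is Ricci-flat, contrary to hypothesis; hence no $g_{\scalebox{0.4}{R}}$ is Ricci-flat. Likewise, if some $g_{\scalebox{0.4}{R}}$ were locally symmetric, part~2 would make $h_{\scalebox{.4}{\emph{PW}}}$ locally symmetric, and if some $g_{\scalebox{0.4}{R}}$ had parallel Ricci tensor, part~3 would give the same for $h_{\scalebox{.4}{\emph{PW}}}$; both contradict the hypotheses. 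This completes the argument.

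I do not expect a genuine obstacle here: all of the real work --- legitimizing the passage to the plane wave limit, and verifying that Ricci-flatness, local symmetry, and parallelism of the Ricci tensor are hereditary under it --- has already been carried out in Propositions~\ref{thm:hereditary} and \ref{prop:einstein}. The only step meriting a sentence of care is the identification of the limit \eqref{eqn:better} with the matrix \eqref{eqn:Rosen}, together with the remark that this limit is insensitive to the off-axis behavior of the $g_{ij}$; both follow immediately from the construction in Section~\ref{sec:pp2}. In effect the present proposition merely repackages Proposition~\ref{prop:einstein} in the ``moduli space over plane waves'' language introduced via Definition~\ref{def:ext}.
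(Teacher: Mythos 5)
Your proposal is correct and is essentially the paper's own proof: the paper likewise forms $-(dt)^2+g_{\scalebox{0.4}{R}}$ for an arbitrary extension and invokes Proposition \ref{prop:einstein}, with the same implicit observation that the plane wave limit depends only on $g_{ij}(r,0,\dots,0)=\bar{g}_{ij}(r)$ and is therefore common to all of $\mathscr{F}_{\scalebox{0.5}{$\bar{g}$}}$. You have merely made explicit the contrapositive step and the $\lambda=0$ specialization in the Ricci-flat case, which the paper leaves tacit.
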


\begin{proof}
For any Riemannian extension $\gR \in \mathscr{F}_{\scalebox{0.5}{\emph{$\bar{g}$}}}$, simply form the Lorentzian metric $g \defeq -(dt)^2 + g_{\scalebox{0.4}{\emph{R}}}$
as in \eqref{eqn:lor0}, and use Proposition \ref{prop:einstein}.
\end{proof}

This result is much more appealing when the plane wave metric \eqref{eqn:Rosen} is expressed in ``Brinkmann coordinates" (see \eqref{eqn:Riem1} in the Appendix), for in such coordinates its curvature 4-tensor is effectively determined by the Hessian of just one function:

\begin{thm}
\label{cor:1}
Let $(\bar{g}_{ij}(r)), \mathscr{F}_{\scalebox{0.5}{\emph{$\bar{g}$}}}$, and $h_{\scalebox{.4}{\emph{PW}}}$ be as in Proposition \ref{thm:obst}, and express $h_{\scalebox{.4}{\emph{PW}}}$ in ``Brinkmann coordinates" as in \eqref{eqn:Riem1}, with corresponding function $H(u,x^2,\dots,x^n)$ given by \eqref{eqn:HH}. Let $\Delta H$ denote the \emph{(}Euclidean\emph{)} Laplacian of $H$ with respect to $x^2,\dots,x^n$.  Then the following is true\emph{:}
\begin{enumerate}[leftmargin=*]
\item[\emph{1.}] If $\partial_u(\Delta H) \neq 0$, then no $g_{\scalebox{0.4}{R}} \in \mathscr{F}_{\scalebox{0.5}{\emph{$\bar{g}$}}}$ has parallel Ricci tensor.
\item[\emph{2.}] If $\Delta H \neq 0$, then no $g_{\scalebox{0.4}{R}} \in \mathscr{F}_{\scalebox{0.5}{\emph{$\bar{g}$}}}$ is Ricci-flat.  
 \item[\emph{3.}] If any $H_{iju} \neq 0$ \emph{(}$i,j=x^2,\dots,x^n$\emph{)}, then no $g_{\scalebox{0.4}{R}} \in \mathscr{F}_{\scalebox{0.5}{\emph{$\bar{g}$}}}$ is locally symmetric.
\end{enumerate}
\end{thm}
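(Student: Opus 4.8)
The plan is to invoke Proposition \ref{thm:obst} to strip away everything Riemannian and reduce the theorem to three curvature statements about the single Lorentzian metric $\hpw$ in Brinkmann form. By Proposition \ref{thm:obst} it is enough to show: (2) if $\Delta H \not\equiv 0$ then $\hpw$ is not Ricci-flat; (3) if $H_{iju} \not\equiv 0$ for some $i,j$ then $\hpw$ is not locally symmetric; and (1) if $\partial_u(\Delta H) \not\equiv 0$ then $\hpw$ does not have parallel Ricci tensor. All three are computations for a plane wave $\hpw = 2\,du\,dv + H(u,x^2,\dots,x^n)\,du^2 + \sum_i (dx^i)^2$ of the Brinkmann shape \eqref{eqn:Riem1}, with $H$ the function \eqref{eqn:HH}, which --- being a genuine Penrose plane wave limit --- is quadratic in the transverse variables $x^2,\dots,x^n$; in particular $H_{ij}$ and $\Delta H$ are then functions of $u$ alone and $H_{ijk}\equiv 0$.

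First I would record the connection of $\hpw$ in Brinkmann coordinates $(u,v,x^2,\dots,x^n)$: the only Christoffel symbols that do not vanish (up to symmetry) are $\Gamma^v_{uu} = \tfrac12 H_u$, $\Gamma^v_{ui} = \tfrac12 H_i$, and $\Gamma^i_{uu} = -\tfrac12 H_i$, where a subscript denotes the corresponding partial derivative. A direct computation then gives that the only independent nonzero component of the curvature $4$-tensor is (up to a sign convention) $R_{iuju} = -\tfrac12 H_{ij}$, and hence the only nonzero component of the Ricci tensor is $\text{Ric}_{uu} = -\tfrac12\,\Delta H$. Thus $\hpw$ is Ricci-flat if and only if $\Delta H \equiv 0$, which is exactly assertion 2.

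Next I would differentiate. Because $\text{Rm}$ and $\text{Ric}$ already have all their indices in the $u$-direction and the connection coefficients are of the restricted form above, one checks that the only nonvanishing component of $\nabla\text{Rm}$ is $\nabla_u R_{iuju} = -\tfrac12 H_{iju}$ and the only nonvanishing component of $\nabla\text{Ric}$ is $\nabla_u\text{Ric}_{uu} = -\tfrac12\,\partial_u(\Delta H)$ --- here the quadratic dependence of $H$ on $x^2,\dots,x^n$ is what makes the potential transverse-direction derivatives $H_{ijk}$ and $\partial_i(\Delta H)$ vanish. Consequently $\hpw$ is locally symmetric if and only if all $H_{iju}\equiv 0$, and it has parallel Ricci tensor if and only if $\partial_u(\Delta H)\equiv 0$; these are assertions 3 and 1. (Alternatively one may quote the classical normal-form discussion of plane waves, where local symmetry corresponds to a constant profile and parallel Ricci to a constant trace of the profile.)

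The main obstacle is the index bookkeeping in the third paragraph: one must verify that no \emph{other} component of $\nabla\text{Rm}$ or $\nabla\text{Ric}$ can be nonzero. I would do this by first listing the components of $\text{Rm}$ and $\text{Ric}$ that are allowed to be nonzero, then observing that every correction term in $\nabla\text{Rm}$ or $\nabla\text{Ric}$ is a Christoffel symbol from the short list above contracted against one of these, which always forces either a vanishing factor $\Gamma^u_{\mu\nu}\equiv 0$ or a curvature component already known to vanish --- so the corrections contribute nothing. I would also pin down signs and normalizations against the conventions of \eqref{eqn:Riem1}--\eqref{eqn:HH} in the Appendix, although, since the conclusions concern only (non)vanishing, the signs are irrelevant to the result itself.
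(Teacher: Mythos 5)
Your proposal is correct and follows essentially the same route as the paper: reduce to the Lorentzian metric $h_{\scalebox{.4}{PW}}$ via Proposition \ref{thm:obst}, pass to Brinkmann coordinates where $H$ is quadratic in the transverse variables, and verify that the only nonvanishing components of $\text{Rm}$, $\text{Ric}$, $\nabla\text{Rm}$, and $\nabla\text{Ric}$ are $-\tfrac12 H_{ij}$, $-\tfrac12\Delta H$, $-\tfrac12 H_{iju}$, and $-\tfrac12\partial_u(\Delta H)$ respectively. The paper's proof is the same computation using the Christoffel symbols \eqref{eqn:Christ}, so no further comparison is needed.
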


\begin{proof}
As shown in Proposition \ref{prop:pp-wave} of the Appendix, the plane wave \eqref{eqn:Rosen} is in so called ``Rosen coordinates."  An isometry will then put $h_{\scalebox{.4}{PW}}$ in the ``Brinkmann coordinates" $(v,u,x^2,\dots,x^n)$ shown in \eqref{eqn:Riem1},
\beqa
\label{eqn:Brink2}
h_{\scalebox{.4}{PW}} = \begin{pmatrix}
        0 & 1  & 0 & \cdots & 0\\
        1 & H & 0 & \cdots & 0\\
        0 & 0 & 1 & \cdots & 0\\
                \vdots & \vdots & \vdots & \ddots &  \vdots\\
        0 & 0 & 0 & \cdots & 1
      \end{pmatrix},\nonumber
      \eeqa
with $H(u,x^2,\dots,x^n)$ a function quadratic in $x^2,\dots,x^n$ (see \eqref{eqn:HH}).  In this form, it is straightforward to verify, using the Christoffel symbols \eqref{eqn:Christ}, that the only nonvanishing components of $\text{Rm}$ and $\text{Ric}$ are
$$
\text{Rm}(\partial_i,\partial_u,\partial_u,\partial_j) = -\frac{1}{2}H_{ij} \comma \text{Ric}(\partial_u,\partial_u) = -\frac{1}{2}\Delta H.
$$
In particular, if $H$ is not harmonic in $x^2,\dots,x^n$, then $h_{\scalebox{.4}{PW}}$ will not be Ricci-flat.  Similarly, the only nonvanishing components of $\nabla\text{Rm}$ and $\nabla\text{Ric}$ (when $H$ is quadratic in $x^2,\dots,x^n$, which is the case here) are
$$
(\nabla_{\!\partial_{u}}\text{Rm})(\partial_i,\partial_u,\partial_u,\partial_j) = -\frac{1}{2}H_{iju} \commas (\nabla_{\!\partial_{u}}\text{Ric})(\partial_u,\partial_u) = -\frac{1}{2}\partial_u(\Delta H).
$$
In particular, if any $H_{iju} \neq 0$, then $h_{\scalebox{.4}{PW}}$ will not be locally symmetric; if $\partial_u(\Delta H) \neq 0$, then $h_{\scalebox{.4}{PW}}$ will not have parallel Ricci tensor.  Applying Proposition \ref{thm:obst} to any Riemannian  extension $\gR \in \mathscr{F}_{\scalebox{0.5}{\emph{$\bar{g}$}}}$ now completes the proof.
\end{proof}

\section{Global Obstructions to Distinguished Riemannian metrics}
\label{sec:global1}
\subsection{Introduction}
We now switch our focus from the local to the global, and search for obstructions to distinguished Riemannian metrics in  the compact setting.  The obstructions we seek are to metrics that are deformations of constant curvature metrics.  Recall that a Riemannian metric $g$ has \emph{constant \emph{(}sectional\emph{\,)} curvature} if its Riemann curvature 4-tensor $\text{Rm}$ satisfies
\beqa
\label{eqn:cc0}
\text{Rm} = \frac{1}{2}\lambda g \kn g
\eeqa
for some $\lambda \in \RR$.  When $\lambda > 0$, the Riemannian metric $g$ is called a \emph{spherical space form}; such metrics have been classified in \cite{wolf}, and include among them the standard round spheres $(\mathbb{S}^n,\mathring{g})$, as well as the class of \emph{Lens spaces} $L(p,q)$ in dimension 3.  However, there are no such examples in the Lorentzian world.  Indeed, by \cite{CM} and \cite{Klingler}, there are no Lorentzian spherical space forms when $M$ is compact, a foundational result in Lorentzian geometry (see \cite{lundberg} for a comprehensive account).  They only exist in the noncompact setting, the canonical one being \emph{de Sitter spacetime} $(\mathbb{S}_1^n,g_{\scalebox{0.4}{dS}})$ ($n \geq 3$), which is the warped product $(\RR \times \mathbb{S}^{n-1},-(dt)^2 + f(t) \mathring{g})$, where  $\mathring{g}$ is the standard Riemannian metric above and 
$$
f(t) = r^2 \cosh^2 \left(\frac{t}{r}\right),
$$
with $r$ the radius of $\mathbb{S}^{n-1}$; consult, e.g., \cite{hartman} and \cite[p.~183ff.]{beem}. This is a complete Lorentzian manifold with constant positive curvature ($\lambda = 1/r^2$).  The starting point of our analysis is the observation that the vector field
$$
T \defeq \cds{}{t} = -\partial_t
$$
is a special case of a unit length \emph{closed} vector field.  As a consequence, the Riemannian metric \eqref{eqn:3*} formed from de Sitter spacetime by ``removing the minus sign,"
$$
g \defeq (dt)^2 + f(t)\mathring{g},
$$
is also complete (see \cite[Lemma~40, p.~209]{o1983}) but more importantly, it has Riemann curvature 4-tensor $\text{Rm}$ related to that of $\gL$ in a simple and elegant way:
\beqa
\label{eqn:begin0}
\text{Rm} = \text{Rm}_{\scalebox{0.4}{\emph{L}}} - \nabla T^{\flat} \kn \nabla T^{\flat}.
\eeqa
We will prove, and generalize, \eqref{eqn:begin0} in Proposition \ref{prop:closedT} below.  For now we point out that, since de Sitter spacetime has constant curvature, $\text{Rm}_{\scalebox{0.4}{\emph{L}}} = \frac{1}{2}\gL \kn \gL$ (with $\lambda = 1$), and since $g = g_{\scalebox{0.4}{dS}}+2(dt)^2$, $\text{Rm}$ can in fact be written directly in terms of $g$, as follows:
\beqa
\label{eqn:Rm0}
\text{Rm} = \frac{1}{2}g \kn g - 2g \kn (T^{\flat}\otimes T^{\flat}) - \nabla T^{\flat} \kn \nabla T^{\flat}.
\eeqa
This makes clear precisely how $g$ deviates from the constant curvature case \eqref{eqn:cc0}, as we describe in more detail in Section \ref{sec:Concluding} below.  In  Theorem \ref{thm:3}, we will generalize this result by first generalizing \eqref{eqn:begin0}, in Proposition \ref{prop:closedT}.  First, let us recall some basic properties of closed vector fields.

\subsection{Closed vector fields}
\label{sec:Prop}
Recall that a vector field $T$ on a Riemannian manifold $(M,g)$ is \emph{closed} if its metric 1-form $T^\flat \defeq g(T,\cdot)$ is closed: $dT^\flat = 0$.  If $M$ is simply connected (and noncompact), then such a $T$ is necessarily a gradient and $\nabla T^{\flat}$ is its Hessian (with $\nabla$ the Levi-Civita connection of $g$); generally speaking, $T$ is always at least locally a gradient.  But the crucial property of such vector fields, for our purposes, is the following:

\begin{lemma}
\label{lemma:Tclosed}
Let $T$ be a unit length vector field on a Riemannian manifold $(M,g)$.  Then $T$ is closed if and only if the endomorphism
$$
\mathcal{D} \colon TM \lra TM  \comma V \mapsto \cd{V}{T}
$$
is self-adjoint with respect to $g$.  Equivalently, $T$ is closed if and only if the 2-tensor $\nabla T^{\flat}$ is symmetric.
\end{lemma}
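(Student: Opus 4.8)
The plan is to relate the exterior derivative of $T^\flat$ directly to the antisymmetric part of $\nabla T^\flat$, using only the torsion-freeness of the Levi-Civita connection and the unit-length hypothesis. First I would recall the standard identity $dT^\flat(V,W) = (\nabla_V T^\flat)(W) - (\nabla_W T^\flat)(V) = g(\nabla_V T, W) - g(\nabla_W T, V)$, valid for \emph{any} 1-form and its metric dual since $\nabla$ is torsion-free; this holds whether or not $T$ has unit length. From this it is immediate that $dT^\flat = 0$ if and only if $g(\nabla_V T,W) = g(\nabla_W T,V)$ for all $V,W$, which is precisely the statement that $\mathcal{D}\colon V \mapsto \nabla_V T$ is self-adjoint with respect to $g$, equivalently that the $(0,2)$-tensor $(V,W)\mapsto g(\nabla_V T, W) = (\nabla_V T^\flat)(W)$ is symmetric.

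So the core equivalence $dT^\flat = 0 \iff \nabla T^\flat \text{ symmetric} \iff \mathcal{D} \text{ self-adjoint}$ does not actually require $|T|=1$ and falls out of the torsion-free identity in one line. The only place the unit-length hypothesis plays a role is in ensuring $\mathcal{D}$ genuinely maps into (and is meaningfully an endomorphism of) the relevant space and that the skew part of $\mathcal{D}$ has no $T$-component: differentiating $g(T,T)=1$ gives $g(\nabla_V T, T) = 0$ for all $V$, so $\mathcal{D}(V) \perp T$ always, and in particular $g(\mathcal{D}(T),T) = 0$; this is a consistency remark rather than a step the equivalence depends on. I would include it briefly since the lemma is stated for unit vector fields and it clarifies why that hypothesis appears.

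I would organize the write-up as: (1) state the identity for $dT^\flat$ in terms of $\nabla$, citing torsion-freeness; (2) observe $(\nabla_V T^\flat)(W) = g(\mathcal{D}(V),W)$ so that $dT^\flat(V,W)$ is exactly twice the skew part of this bilinear form; (3) conclude the chain of equivalences; (4) append the $g(\nabla_V T,T)=0$ remark from unit length. There is no real obstacle here — the statement is essentially a repackaging of the formula $d\omega(V,W) = V(\omega(W)) - W(\omega(V)) - \omega([V,W])$ together with $\nabla_V W - \nabla_W V = [V,W]$. The one point requiring a word of care is the distinction between the tensor $\nabla T^\flat$ (which lives in $T^*M \otimes T^*M$ and whose symmetry is the assertion) and the endomorphism $\mathcal{D}$ (in $T^*M \otimes TM$): the musical isomorphism induced by $g$ intertwines "self-adjoint $\mathcal{D}$" with "symmetric $\nabla T^\flat$", so these two formulations of the conclusion are literally the same condition transported across $\flat$, and I would note this explicitly to justify the "Equivalently" in the statement.
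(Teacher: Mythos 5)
Your proof is correct, and it takes a genuinely different route from the paper's. You invoke the torsion-free identity $dT^\flat(V,W) = (\nabla_V T^\flat)(W) - (\nabla_W T^\flat)(V) = g(\nabla_V T, W) - g(\nabla_W T, V)$ and read off the whole chain of equivalences in one line, correctly observing that the unit-length hypothesis is not needed for the equivalence itself. The paper instead argues in an adapted orthonormal frame $\{T, X_1,\dots,X_{n-1}\}$: it first extracts from $dT^\flat = 0$ the two auxiliary facts that $T$ has geodesic flow ($\nabla_T T = 0$, which \emph{does} use $g(T,T)=1$ to kill the $g(\nabla_V T, T)$ term) and that $T^\perp$ is integrable, and then verifies self-adjointness of $\mathcal{D}$ component by component. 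Your argument is shorter and more general; the paper's buys two byproducts ($\nabla_T T = 0$ and integrability of $T^\perp$, the latter labelled \eqref{eqn:prop1}) that are reused explicitly in the proofs of Lemma \ref{lemma:LCL} and Proposition \ref{prop:closedT}, so the frame-based detour is not wasted there. If you adopt your streamlined proof in this context, you would still want to record those two consequences separately, since the later curvature computations lean on them. Your closing remark distinguishing the $(0,2)$-tensor $\nabla T^\flat$ from the endomorphism $\mathcal{D}$ under the musical isomorphism is accurate and justifies the ``Equivalently'' in the statement.
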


\begin{proof}
Suppose that $T$ is closed.  Then it must have geodesic flow, $\cd{T}{T} = 0$,  because
\beqa
\label{eqn:prop0}
\underbrace{\,dT^\flat(T,V)\,}_{0} = g(\cd{T}{T},V)\hspace{.2in}\text{for all $V \in \mathfrak{X}(M)$}.
\eeqa
Furthermore, its orthogonal complement $T^{\perp} \subseteq TM$ will be integrable, because
\beqa
\label{eqn:prop1}
\underbrace{\,dT^\flat(X_i,X_j)\,}_{0} = -g(T,[X_i,X_j])\hspace{.2in}\text{for all $X_i,X_j \in \Gamma(T^\perp)$}.
\eeqa
Then, with respect to any $g$-orthonormal basis of the form $\{T,X_1,\dots,X_{n-1}\}$, observe that
\beqa
g(\cd{T}{T},X_i) \!\!\!\!\!&\overset{\eqref{eqn:prop0}}{=}&\!\!\!\!\! 0\, =\, g(\cd{X_i}{T},T),\nonumber\\
g(\cd{X_i}{T},X_j) \!\!\!\!\!&=&\!\!\!\!\!\! -g(T,\cd{X_i}{X_j}) \!\overset{\eqref{eqn:prop1}}{=}\! -g(T,\cd{X_j}{X_i}) = g(\cd{X_j}{T},X_i).\nonumber
\eeqa
It follows that $g(\cd{V}{T},W) = g(\cd{W}{T},V)$ for all $V,W \in \mathfrak{X}(M)$, so that $\mathcal{D}$ is indeed self-adjoint.  The converse is also clear from these two equations.
Finally, suppose that $T$ is closed and consider the 2-tensor $\nabla T^{\flat}$; then
$$
\nabla T^{\flat}(X_i,X_j) = -g(T,\cd{X_j}{X_i}) \overset{\eqref{eqn:prop1}}{=} -g(T,\cd{X_i}{X_j}) = \nabla T^{\flat}(X_j,X_i).
$$
Similarly, by \eqref{eqn:prop0} it will be the case that $\nabla T^{\flat}(T,X_i) = \nabla T^{\flat}(X_i,T) = 0$.  Thus $\nabla T^{\flat}$ is symmetric; the converse is also clear from these equations.
\end{proof}

Henceforth we will assume that $T$ is a unit length closed vector field.  As the endomorphism $\mathcal{D}$ will be self-adjoint, let $\{T,X_2,\dots,X_n\}$ be a local $g$-orthonormal basis of eigenvectors of $\mathcal{D}$,
\beqa
\label{eqn:Dbasis}
\mathcal{D}(T) = \cd{T}{T} = 0 \comma \mathcal{D}(X_i) = \cd{X_i}{T} = \lambda_i X_i,
\eeqa
for some $\lambda_2,\dots,\lambda_n \in \RR$.  Now consider the Lorentzian metric $\gL$ defined by 
$$
\gL \defeq g -2 T^{\flat} \otimes T^{\flat}.
$$
Observe that $\{T,X_2,\dots,X_n\}$ is simultaneously a $\gL$-orthonormal basis, except that $T$ is now unit ``timelike": $\gL(T,T) = -1$.  The Levi-Civita connection $\cds{}{}$ of $\gL$ is related to that of $g$ as follows:

\begin{lemma}
\label{lemma:LCL}
Let $\{T,X_2,\dots,X_n\}$ be a local $g$-orthonormal basis of eigenvectors of $\mathcal{D}$ with eigenvalues \eqref{eqn:Dbasis}.  Then the Levi-Civita connection $\nabla^{\scalebox{0.4}{L}}$ of the Lorentzian metric $g_{\scalebox{.4}{L}} = g -2 T^{\flat} \otimes T^{\flat}$ satisfies
\beqa
\left\{\begin{array}{ccc}
\nabla^{\scalebox{0.4}{L}}_{\!T}{T} \!\!\!&=&\hspace{-.5in} -\cd{T}{T} = 0,\\
 \nabla^{\scalebox{0.4}{L}}_{\!X_i}{T} \!\!\!&=&\hspace{-.3in} \cd{X_i}{T} = \lambda_iX_i,\\ \nabla^{\scalebox{0.4}{L}}_{\!X_i}{X_j} \!\!\!&=&\!\! 2\lambda_i\delta_{ij}T + \cd{X_i}{X_j}.
\end{array}\right.
\eeqa
\end{lemma}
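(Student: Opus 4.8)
The plan is to recognize the right-hand sides as the frame values of a torsion-free, $\gL$-compatible connection and then appeal to the uniqueness of the Levi-Civita connection of $\gL$. I would begin by recording how $\gL = g - 2T^{\flat}\otimes T^{\flat}$ behaves on the given frame: since $T^{\flat}(X_i)=0$ and $g(T,T)=1$, one has $\gL(T,T)=-1$, $\gL(T,X_i)=0$ and $\gL(X_i,X_j)=\delta_{ij}=g(X_i,X_j)$, so $\{T,X_2,\dots,X_n\}$ is $\gL$-orthonormal with $T$ timelike, and $\gL$ agrees with $g$ on every pair of vectors lying in $T^{\perp}$. Two consequences of the $g$-compatibility of $\nabla$ will carry the argument: pairing the identities in \eqref{eqn:Dbasis} against the frame gives $g(\nabla_{X_i}X_j,T)=-g(X_j,\nabla_{X_i}T)=-\lambda_i\delta_{ij}$ and $g(\nabla_T X_i,T)=-g(X_i,\nabla_T T)=0$; thus the $T$-component of $\nabla_{X_i}X_j$ is exactly $-\lambda_i\delta_{ij}T$, while $\nabla_T X_i$ already lies in $T^{\perp}$.

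Next I would define a connection $\w{\nabla}$ by prescribing, on the frame,
$$
\w{\nabla}_T T := 0,\qquad \w{\nabla}_{X_i}T := \lambda_i X_i,\qquad \w{\nabla}_T X_i := \nabla_T X_i,\qquad \w{\nabla}_{X_i}X_j := 2\lambda_i\delta_{ij}T + \nabla_{X_i}X_j,
$$
and extending to arbitrary vector fields by $C^{\infty}(M)$-linearity in the first slot and the Leibniz rule in the second; three of these four values are precisely the formulas claimed in the lemma. Torsion-freeness is immediate on frame pairs: $\w{\nabla}_T X_i-\w{\nabla}_{X_i}T=\nabla_T X_i-\nabla_{X_i}T=[T,X_i]$ since $\nabla$ is torsion-free, and $\w{\nabla}_{X_i}X_j-\w{\nabla}_{X_j}X_i=2(\lambda_i-\lambda_j)\delta_{ij}T+[X_i,X_j]=[X_i,X_j]$ because the $T$-coefficient vanishes unless $i=j$, in which case it is zero.

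The one step with content is checking $\gL$-compatibility, $A(\gL(B,C))=\gL(\w{\nabla}_A B,C)+\gL(B,\w{\nabla}_A C)$, which by tensoriality need only be verified for $A,B,C$ in the frame. The two identities above reduce every case to bookkeeping: whenever a $\nabla_{X_i}X_j$ or $\nabla_T X_i$ is paired under $\gL$ with some $X_k$ it may be replaced by the same pairing under $g$ (the $T$-component is invisible to both), so those subcases collapse to the known relations $X_i(\delta_{jk})=0$ and $T(\delta_{ij})=0$; and whenever it is paired with $T$, its contribution is governed by $g(\nabla_{X_i}X_j,T)=-\lambda_i\delta_{ij}$ together with $\gL(T,T)=-1$. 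The explicit $2\lambda_i\delta_{ij}T$ correction term in $\w{\nabla}_{X_i}X_j$ is exactly what the case $(A,B,C)=(X_i,X_j,T)$ requires in order to balance: its $\gL$-pairing with $T$ produces a $-2\lambda_i\delta_{ij}$ that cancels against the contributions of $\nabla_{X_i}X_j$ and $\w{\nabla}_{X_i}T$. Running through the finitely many frame triples — $(X_i,X_j,X_k)$, $(X_i,X_j,T)$, $(X_i,T,T)$, $(T,X_i,X_j)$, $(T,X_i,T)$, $(T,T,T)$ — confirms compatibility, so by uniqueness $\w{\nabla}=\nabla^{\scalebox{.4}{L}}$ and the lemma follows by reading off the prescribed values. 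I expect the only genuine obstacle to be the sign-bookkeeping in these checks: one must keep straight that $T$ is $\gL$-timelike and that $\nabla_{X_i}X_j$ secretly carries the term $-\lambda_i\delta_{ij}T$. An alternative that trades the candidate-connection argument for a purely mechanical one is to compute $2\gL(\nabla^{\scalebox{.4}{L}}_A B,C)$ directly from Koszul's formula on frame triples, eliminating the brackets via $[T,X_i]=\lambda_i X_i-\nabla_T X_i$ and $[X_i,X_j]=\nabla_{X_i}X_j-\nabla_{X_j}X_i$; the arithmetic is essentially the same.
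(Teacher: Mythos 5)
Your proof is correct. The paper itself disposes of this lemma in a single line --- ``These all follow by the Koszul formula, together with \eqref{eqn:Dbasis}'' --- i.e.\ it computes $2\gL(\nabla^{L}_{A}B,C)$ directly on frame triples, which is precisely the ``alternative'' you sketch in your last sentence. Your primary route is different in organization: you write down the candidate connection $\widetilde{\nabla}$ on the frame, check torsion-freeness and $\gL$-compatibility there, and invoke uniqueness of the Levi-Civita connection. The two arguments are driven by the same two identities ($g(\cd{X_i}{X_j},T)=-\lambda_i\delta_{ij}$ from \eqref{eqn:Dbasis}, and $\gL(\cdot,X_k)=g(\cdot,X_k)$ on $T^{\perp}$, together with $\gL(V,T)=-g(V,T)$ for the $T$-components), so neither buys a computational saving; but your version has two small advantages. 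First, it makes explicit the fourth value $\nabla^{L}_{T}X_i=\cd{T}{X_i}$, which the lemma's statement omits but which the proof of Proposition \ref{prop:closedT} quietly relies on (e.g.\ in identifying $\gL(\nabla^{L}_{T}X_i,X_j)$ with $g(\cd{T}{X_i},X_j)$). Second, it isolates exactly where the correction term $2\lambda_i\delta_{ij}T$ is forced, namely by compatibility in the triple $(X_i,X_j,T)$ against $\gL(T,T)=-1$: the Riemannian $T$-component $-\lambda_i\delta_{ij}$ of $\cd{X_i}{X_j}$ flips sign under $\gL$, and the factor $2$ is exactly the discrepancy. The only thing I would ask you to state explicitly in a final write-up is that one identity $\gL(V,T)=-g(V,T)$ for arbitrary $V$, since all of your sign bookkeeping flows through it; with that recorded, every one of your frame-triple checks goes through as you describe.
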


\begin{proof}
These all follow by the Koszul formula, together with \eqref{eqn:Dbasis}.
\end{proof}

Lemmas \ref{lemma:Tclosed} and \ref{lemma:LCL} combine to yield an elegant relationship between the curvature 4-tensors of $g$ and $\gL$; before stating it, recall the \emph{Kulkarni-Nomizu product}: for any pair of symmetric 2-tensors $P,Q$, the 4-tensor $P \kn Q$ defined by
\beqa
(P \kn Q)(v,w,x,y) \!\!&\defeq&\!\! P(v,y)Q(w,x) + P(w,x)Q(v,y)\nonumber\\
&&\hspace{.7in} - P(v,x)Q(w,y) - P(w,y)Q(v,x)\nonumber
\eeqa
is an algebraic curvature tensor (i.e., possessing the same symmetries as the Riemann curvature 4-tensor; note, e.g., that $P \kn Q = Q \kn P$ is one of these symmetries).

\begin{prop}
\label{prop:closedT}
Let $(M,g)$ be a Riemannian $n$-manifold equipped with a unit length closed vector field $T$. Define the Lorentzian metric
$$
g_{\scalebox{.4}{L}} \defeq g -2 T^{\flat} \otimes T^{\flat}.
$$
Then $g_{\scalebox{.4}{L}}$ has Riemann curvature 4-tensor $\emph{\text{Rm}}_{\scalebox{.4}{L}}$ given by
$$
\emph{\text{Rm}}_{\scalebox{.4}{L}} = \emph{\text{Rm}} + \nabla T^{\flat} \kn \nabla T^{\flat},
$$
where $\nabla$ and $\emph{\text{Rm}}$ are, respectively, the Levi-Civita connection and Riemann curvature 4-tensor of $g$.
\end{prop}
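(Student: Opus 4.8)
The plan is to realize the Levi-Civita connection $\cds{}{}$ of $\gL$ as a correction of $\cd{}{}$ by an explicit tensor, substitute this into the standard formula relating the two curvature endomorphisms, and then lower an index with $\gL$.

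First I would identify the difference tensor. Since $\cds{}{}$ and $\cd{}{}$ are both torsion-free, $A(X,Y) \defeq \cds{X}{Y} - \cd{X}{Y}$ defines a symmetric $(1,2)$-tensor. Lemma \ref{lemma:LCL} evaluates $A$ on the $\mathcal{D}$-eigenframe $\{T,X_2,\dots,X_n\}$: it gives $A(X_i,X_j) = 2\lambda_i\delta_{ij}T$ and $A(T,\cdot) = 0$. Since $\nabla T^{\flat}(X,Y) = g(\cd{X}{T},Y)$, on this frame $\nabla T^{\flat}$ is diagonal with entries $(0,\lambda_2,\dots,\lambda_n)$, so $A(X_i,X_j) = 2\,\nabla T^{\flat}(X_i,X_j)\,T$; as both sides are tensorial, this yields
$$
A(X,Y) = 2\,\nabla T^{\flat}(X,Y)\,T \qquad\text{for all }X,Y.
$$
I will also use repeatedly that $\nabla T^{\flat}(X,T) = g(\cd{X}{T},T) = \tfrac12 X\big(g(T,T)\big) = 0$, because $T$ has unit length.

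Next I would apply the curvature-difference identity: for torsion-free connections differing by $A$,
$$
R_{\scalebox{0.4}{L}}(X,Y)Z = R(X,Y)Z + (\nabla_X A)(Y,Z) - (\nabla_Y A)(X,Z) + A\big(X,A(Y,Z)\big) - A\big(Y,A(X,Z)\big),
$$
where $R$ and $R_{\scalebox{0.4}{L}}$ are the curvature endomorphisms of $g$ and $\gL$. The two quadratic terms vanish because $A(Y,Z)$ is a multiple of $T$ while $A(X,T) = 2\nabla T^{\flat}(X,T)\,T = 0$. Differentiating the formula for $A$ gives $(\nabla_X A)(Y,Z) = 2(\nabla_X\nabla T^{\flat})(Y,Z)\,T + 2\nabla T^{\flat}(Y,Z)\,\cd{X}{T}$, and on antisymmetrizing in $X,Y$ the Ricci identity for the $1$-form $T^{\flat}$ turns $(\nabla_X\nabla T^{\flat})(Y,Z) - (\nabla_Y\nabla T^{\flat})(X,Z)$ into $-T^{\flat}\big(R(X,Y)Z\big)$. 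Hence
$$
R_{\scalebox{0.4}{L}}(X,Y)Z = R(X,Y)Z - 2\,g\big(T,R(X,Y)Z\big)\,T + 2\,\nabla T^{\flat}(Y,Z)\,\cd{X}{T} - 2\,\nabla T^{\flat}(X,Z)\,\cd{Y}{T}.
$$

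Finally I would lower an index using $\gL = g - 2T^{\flat}\otimes T^{\flat}$, writing $\RmL(X,Y,Z,W) = \gL\big(R_{\scalebox{0.4}{L}}(X,Y)Z,W\big)$. With $g(\cd{X}{T},W) = \nabla T^{\flat}(X,W)$, $g(T,\cd{X}{T}) = 0$, and $g(T,T) = 1$, one finds $g\big(T,R_{\scalebox{0.4}{L}}(X,Y)Z\big) = -g\big(T,R(X,Y)Z\big)$, so the term $-2\,g\big(T,R_{\scalebox{0.4}{L}}(X,Y)Z\big)T^{\flat}(W)$ exactly cancels the $-2\,g\big(T,R(X,Y)Z\big)T^{\flat}(W)$ coming from the $g$-part of $\gL$; what survives is $\text{Rm}(X,Y,Z,W) + 2\nabla T^{\flat}(X,W)\nabla T^{\flat}(Y,Z) - 2\nabla T^{\flat}(X,Z)\nabla T^{\flat}(Y,W)$, which is precisely $\big(\text{Rm} + \nabla T^{\flat}\kn\nabla T^{\flat}\big)(X,Y,Z,W)$. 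The main obstacle is not conceptual but bookkeeping: keeping the sign and index conventions consistent — in particular aligning the Ricci identity and the passage from endomorphism to $(0,4)$-tensor with the curvature convention fixed by \eqref{eqn:cc0} — so that the surviving quadratic term carries the right coefficient to match $\nabla T^{\flat}\kn\nabla T^{\flat}$ and the $g\big(T,R(X,Y)Z\big)T^{\flat}(W)$ contributions cancel rather than reinforce. A reassuring check is that both sides of the claimed identity are algebraic curvature tensors and that the formula reduces to \eqref{eqn:begin0} when $g$ has constant curvature.
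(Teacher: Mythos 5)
Your argument is correct, and it reaches the identity by a genuinely different route than the paper. The paper proves Proposition \ref{prop:closedT} by brute force: it fixes the local $g$-orthonormal eigenframe $\{T,X_2,\dots,X_n\}$ of $\mathcal{D}$ and computes every component of $\text{Rm}_{\scalebox{.4}{L}}$ separately via repeated applications of the Koszul formula, splitting into the three cases $\text{Rm}_{\scalebox{.4}{L}}(X_i,T,T,X_j)$, $\text{Rm}_{\scalebox{.4}{L}}(X_i,X_k,T,X_j)$, and $\text{Rm}_{\scalebox{.4}{L}}(X_k,X_i,X_j,X_l)$. You instead package Lemma \ref{lemma:LCL} into the difference tensor $A(X,Y)=\cds{X}{Y}-\cd{X}{Y}=2\,\nabla T^{\flat}(X,Y)\,T$ and feed it into the standard curvature-difference formula; the quadratic terms die because $A$ is $T$-valued and $\nabla T^{\flat}(\cdot,T)=0$, the Ricci identity converts the antisymmetrized $\nabla A$ into a curvature term that cancels against the $-2T^{\flat}\otimes T^{\flat}$ correction when you lower the index with $\gL$, and what survives is exactly $\nabla T^{\flat}\kn\nabla T^{\flat}$ (I checked the coefficient against the paper's normalization of $\kn$, where $P\kn P$ carries the factor $2$ you obtain). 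Your route is more structural and frame-free once $A$ is identified, and it explains conceptually why the correction is precisely the Kulkarni--Nomizu square of $\nabla T^{\flat}$; note only that the identification of $A$ still leans on Lemma \ref{lemma:LCL} (equivalently on closedness of $T$, which makes $\nabla T^{\flat}$ symmetric), so closedness is used in the same essential way. What the paper's longer computation buys is the individual component identities \eqref{eqn:Ric1}, \eqref{eqn:Ric2}, and especially the Bochner-type relation \eqref{eqn:Bochner}, which are cited again in the proof of Theorem \ref{thm:3}; if one adopted your proof, those byproducts would have to be derived separately.
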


\begin{proof}
For $\text{Rm}$ and $\RmL$, we adopt the sign convention
$$
\text{Rm}(v,w,x,y) = g(\cd{v}{\!\cd{w}{\,x}},y) - g(\cd{w}{\cd{v}{\,x}},y) - g(\cd{[v,w]}{x},y).
$$  
We now compute and compare all the components of $\text{Rm}$ and $\RmL$ with respect to the $g$- and $\gL$-orthonormal basis \eqref{eqn:Dbasis}:
\begin{enumerate}[leftmargin=*]
\item[1.] We start with the components 
$\text{Rm}_{\scalebox{0.4}{\emph{L}}}(X_i,T,T,X_j)$:
\beqa
\text{Rm}_{\scalebox{0.4}{\emph{L}}}(X_i,T,T,X_j) \!\!&=&\!\! \gL(\cds{X_i}{\cancelto{0}{\!\cds{T}{T}}},X_j) - \!\!\!\!\!\underbrace{\,\gL(\cds{T}{\overbrace{\,\cds{X_i}{T}\,}^{\lambda_i X_i}},X_j)\,}_{\text{$T(\lambda_i)\delta_{ij}+\lambda_i\gL(\cds{T}{X_i},\,X_j)$}}\!\!\!\! - \underbrace{\,\gL(\cds{[X_i,T]}{T},X_j)\,}_{\text{$\gL(\cds{X_j}{\!T},[X_i,T])$}}\nonumber\\
&=& -T(\lambda_i)\delta_{ij} - \lambda_i\gL(\cds{T}{X_i},X_j) - \lambda_j\!\!\!\!\!\!\!\!\!\!\!\!\underbrace{\,\gL(X_j,[X_i,T])\,}_{\text{$\gL(X_j,\!\cds{X_i}{T})-\gL(X_j,\!\cds{T}{X_i})$}}\nonumber\\
&=& -\big(T(\lambda_i) + \lambda_j\lambda_i\big)\delta_{ij} + \underbrace{\,\gL(\cds{T}{X_i},X_j)\,}_{g(\cd{T}{X_i},X_j)}(\lambda_j-\lambda_i)\label{eqn:Bochner}\\
&=& \text{Rm}(X_i,T,T,X_j),\label{eqn:Ric1}
\eeqa
where the last equality is due to the fact that the same answer would have been reached starting with
$$
\text{Rm}(X_i,T,T,X_j) = g(\cd{X_i}{\!\cd{T}{T}},X_j) - g(\cd{T}{\cd{X_i}{T}},X_j) - g(\cd{[X_i,T]}{T},X_j).
$$

\item[2.] Next, the components $\text{Rm}_{\scalebox{0.4}{\emph{L}}}(X_i,X_k,T,X_j)$, with $k \neq i$:
\beqa
\text{Rm}_{\scalebox{0.4}{\emph{L}}}(X_i,X_k,T,X_j) \!\!&=&\!\! \underbrace{\,\gL(\cds{X_i}{{\overbrace{\,\cds{X_k}{T}\,}^{\lambda_k X_k}}},X_j)\,}_{g(\cd{X_i}{\!\cd{X_k}{T}},\,X_j)} - \underbrace{\,\gL(\cds{X_k}{\overbrace{\,\cds{X_i}{T}\,}^{\lambda_i X_i}},X_j)\,}_{g(\cd{X_k}{\!\cd{X_i}{T}},\,X_j)}\nonumber\\
&&\hspace{1.75in} - \underbrace{\,\gL(\cds{[X_i,X_k]}{T},X_j)\,}_{\text{$\gL(\cds{X_j}{\!T},\,[X_i,X_k])$}}\nonumber\\
&=&\!\! g(\cd{X_i}{\!\cd{X_k}{T}},X_j) - g(\cd{X_k}{\!\cd{X_i}{T}},X_j) - \underbrace{\,g(\cd{X_j}{T},[X_i,X_k])\,}_{g(\cd{[X_i,X_k]}{T},\,X_j)}\nonumber\\
&=&\!\! \text{Rm}(X_i,X_k,T,X_j),\label{eqn:Ric2}
\eeqa
where in the first and second equalities we've used the fact that
$
\gL(\cdot,X) = g(\cdot,X)
$
for any $X \in T^{\perp}$, which includes $[X_i,X_k]$ by integrability of $T^{\perp}$. 
\item[3.] Finally, the components $\text{Rm}_{\scalebox{0.4}{\emph{L}}}(X_k,X_i,X_j,X_l)$, with $k \neq i$, $l \neq j$:
\beqa
\text{Rm}_{\scalebox{0.4}{\emph{L}}}(X_k,X_i,X_j,X_l) \!\!&=&\!\! \underbrace{\,\gL(\cds{X_k}{\!\cds{X_i}{X_j}},X_l)\,}_{(a)} - \underbrace{\,\gL(\cds{X_i}{\!\cds{X_k}{X_j}},X_l)\,}_{(b)}\nonumber\\
&&\hspace{1.75in} - \underbrace{\,\gL(\cds{[X_k,X_i]}{X_j},X_l)\,}_{(c)},\nonumber
\eeqa
will be computed term-by-term, using the Koszul formula, beginning with $(a)$:
\beqa
\gL(\cds{X_k}{\!\cds{X_i}{X_j}},X_l) \!\!&=&\!\! \gL(\cds{X_k}{\underbrace{\cd{X_i}{X_j}}_{``Z\,"}},X_l) + 2\lambda_i\delta_{ij}\,\gL(\underbrace{\cds{X_k}{T}}_{\lambda_k X_k},X_l)\nonumber\\
&=&\!\!\frac{1}{2}\Big[X_k(\gL(Z,X_l)) + Z(\gL(X_l,X_k)) - X_l(\gL(X_k,Z))\nonumber\\
&&\hspace{.2in}-\gL(Z,[X_k,X_l]) - \gL(X_l,[Z,X_k]) + \gL(X_k,[X_l,Z])\Big]\nonumber\\
&&\hspace{2in}+\, 2\lambda_i\lambda_k\delta_{ij}\delta_{kl}.\nonumber
\eeqa
Every ``$\gL(\cdot,\cdot)$" term here contains at least one $X \in T^{\perp}$, so that it equals $g(\cdot,\cdot)$.  We thus have that
\beqa
\gL(\cds{X_k}{\!\cds{X_i}{X_j}},X_l) \!\!&=&\!\! \frac{1}{2}\Big[X_k(g(Z,X_l)) + Z(g(X_l,X_k)) - X_l(g(X_k,Z))\nonumber\\
&&\hspace{.2in}-g(Z,[X_k,X_l]) - g(X_l,[Z,X_k]) + g(X_k,[X_l,Z])\Big]\nonumber\\
&&\hspace{2in}+\, 2\lambda_i\lambda_k\delta_{ij}\delta_{kl}\nonumber\\
&=&\!\! g(\cd{X_k}{Z},X_l) + 2\lambda_i\lambda_k\delta_{ij}\delta_{kl}\nonumber\\
&=&\!\! g(\cd{X_k}{\!\cd{X_i}{X_j}},X_l) + 2\lambda_i\lambda_k\delta_{ij}\delta_{kl}.\label{eqn:term1}
\eeqa
Similarly with $(b)$,
\beqa
\gL(\cds{X_i}{\!\cds{X_k}{X_j}},X_l) \!\!&=&\!\! \gL(\cds{X_i}{\underbrace{\cd{X_k}{X_j}}_{``Z\,"}},X_l) + 2\lambda_k\delta_{kj}\,\gL(\underbrace{\cds{X_i}{T}}_{\lambda_i X_i},X_l)\nonumber\\
&=&\!\! \frac{1}{2}\Big[X_i(\gL(Z,X_l)) + Z(\gL(X_l,X_i)) - X_l(\gL(X_i,Z))\nonumber\\
&&\hspace{.2in}-\gL(Z,[X_i,X_l]) - \gL(X_l,[Z,X_i]) + \gL(X_i,[X_l,Z])\Big]\nonumber\\
&&\hspace{2in}+\, 2\lambda_k\lambda_i\delta_{kj}\delta_{il}\nonumber\\
&=&\!\!g(\cd{X_i}{\!\cd{X_k}{X_j}},X_l) + 2\lambda_i\lambda_k\delta_{kj}\delta_{il}.\label{eqn:term2}
\eeqa
For $(c)$, observe that because $[X_k,X_i] \in T^{\perp}$, it is of the form $[X_k,X_i] = \sum_{r=1}^{n-1} a_rX_r$ with the $a_r$'s smooth locally defined functions, which in turn implies that $\cds{[X_k,X_i]}{X_j} = \cd{[X_k,X_i]}{X_j} +2a_j\lambda_j T$.  Hence
\beqa
\label{eqn:term3}
\gL(\cds{[X_k,X_i]}{X_j},X_l) \!\!&=&\!\! g(\cd{[X_k,X_i]}{X_j},X_l).
\eeqa
Taken together, \eqref{eqn:term1}, \eqref{eqn:term2}, and \eqref{eqn:term3} yield
\beqa
\label{eqn:Ric4}
\text{Rm}_{\scalebox{0.4}{\emph{L}}}(X_k,X_i,X_j,X_l) \!\!&=&\!\! \text{Rm}(X_k,X_i,X_j,X_l) + 2\lambda_i\lambda_k(\delta_{ij}\delta_{kl}-\delta_{kj}\delta_{il})\nonumber\\
&=&\!\! \text{Rm}(X_k,X_i,X_j,X_l) + (\nabla T^{\flat} \kn \nabla T^{\flat})(X_k,X_i,X_j,X_l).\nonumber\\
\eeqa 
\end{enumerate}
Given \eqref{eqn:Ric1}, \eqref{eqn:Ric2}, and \eqref{eqn:Ric4}, and the fact that $(\nabla T^{\flat} \kn \nabla T^{\flat})(T,\cdot,\cdot,\cdot) = 0$, the proof is complete.
\end{proof}
Note that the components \eqref{eqn:Ric1} and \eqref{eqn:Ric2} can also be found in \cite{olea}; however, the most important curvature component in our Proposition \ref{prop:closedT}, namely \eqref{eqn:Ric4}, has not (to the best of our knowledge) appeared explicitly in the literature.  Be that as it may, here are two examples illustrating Proposition \ref{prop:closedT}, both in the noncompact setting, beginning with the canonical one:

\vskip 6pt

{\bf Example~1.}~For de Sitter spacetime $(\RR \times \mathbb{S}^{n-1},-dt^2 + f(t) \mathring{g})$,
$$
T \defeq \cds{}{t} = -\partial_t
$$
is a unit length closed vector field.  It will also be so for the corresponding Riemannian metric 
$
g \defeq \gL + 2T^{\flat_{\scalebox{0.4}{\emph{L}}}} \otimes T^{\flat_{\scalebox{0.4}{\emph{L}}}} = dt^2 + f(t)\mathring{g}.
$

\vskip 6pt

{\bf Example~2.}~On $\RR^3 = \{(x^1,x^2,x^3)\}$, consider a Lorentzian metric $\gL$ whose components $({\gL}_{ij})$ with respect to the coordinate basis $\{\partial_1,\partial_2,\partial_3\}$ take the form
$$
({\gL}_{ij}) = \begin{pmatrix}
0 & 1 & 0\\
1 & H(x^1) & 0\\
0 & 0 & H(x^1)/2
\end{pmatrix},
$$
where $H(x^1)$ is a smooth function, to be determined below, such that $\gL$ will have constant positive curvature on an open subset of $\RR^3$.  In dimension 3, this is equivalent to being an Einstein metric with positive Einstein constant, which we can take to be $1$:
$
\text{Ric}_{\scalebox{0.4}{\emph{L}}} = \gL.
$
Though we forego the computations here, it is straightforward to show that this will hold if and only if
$
H(x^1) = (x^1+a)^2/2, a \in \RR.
$
For example, if we take $a = 2$, then $\gL$ will be a Lorentzian metric in the open subset $\{(x^1,x^2,x^3) : x^1 > -2\}$.  Finally, let $f(x^1)$ be a smooth function and consider its $\gL$-gradient:
$$
\cds{}{f} = -\frac{f'}{2}(x^1+2)^2\,\partial_1 + f'\,\partial_2.
$$
Another computation shows that $\gL(\cds{}{f},\cds{}{f}) = -1$ if and only if
$$
(f')^2 = \frac{2}{(x^1+2)^2}\cdot
$$
Taking the smooth solution $f(x^1) = \sqrt{2}\,\text{ln}\big(\frac{x^1+2}{2}\big)$, we thus have that the pair $(\gL, \cds{}{f})$, when restricted to $\{(x^1,x^2,x^3) : x^1 > -2\}$, yields a Lorentzian manifold with constant positive curvature and a closed, unit timelike vector field.  The corresponding Riemannian metric $g$ is then $g \defeq \gL + 2df \otimes df.$
With respect to $g$, $\cds{}{f}$ will also be a unit length closed vector field.

\subsection{Obstructions to Riemannian metrics in the compact setting}
\label{sec:proof}
We are now ready to prove our main Theorem of this section:
\begin{thm}
\label{thm:3}
Let $(M,g)$ be a Riemannian $n$-manifold $(n \geq 3)$ and $T$ a unit length closed vector field on $M$.  For $\lambda \in \RR$, the curvature 4-tensor of $g$ takes the form
\beqa
\label{eqn:1}
\emph{\text{Rm}} = \frac{1}{2}\lambda g \kn g - 2\lambda g \kn (T^{\flat}\otimes T^{\flat}) - \nabla T^{\flat} \kn \nabla T^{\flat}
\eeqa
if and only if the Lorentzian metric $g_{\scalebox{0.4}{L}} \defeq g - 2T^{\flat} \otimes T^{\flat}$ has constant curvature $\lambda$.  If $M$ is compact, then $\lambda = 0$ and $T$ is parallel\emph{;} hence $g$ is flat, and its universal cover splits isometrically as a product $\RR \times N$.
\end{thm}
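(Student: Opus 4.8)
The plan is to prove the ``if and only if'' as an algebraic identity built on Proposition~\ref{prop:closedT}, and to squeeze the compact conclusion out of a Raychaudhuri/Bochner identity for the unit geodesic field $T$.

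\emph{The equivalence.} Since $T$ is unit length and closed, Proposition~\ref{prop:closedT} applies verbatim to $\gL = g - 2T^{\flat}\otimes T^{\flat}$ and gives $\RmL = \text{Rm} + \nabla T^{\flat}\kn\nabla T^{\flat}$. It therefore suffices to rewrite the constant-curvature condition $\RmL = \tfrac12\lambda\,\gL\kn\gL$ in terms of $g$. Substituting $\gL = g - 2T^{\flat}\otimes T^{\flat}$ and expanding by bilinearity of the Kulkarni--Nomizu product, using $g\kn(T^{\flat}\otimes T^{\flat}) = (T^{\flat}\otimes T^{\flat})\kn g$ and the vanishing of the Kulkarni--Nomizu square of the rank-one tensor $T^{\flat}\otimes T^{\flat}$, one obtains $\tfrac12\lambda\,\gL\kn\gL = \tfrac12\lambda\,g\kn g - 2\lambda\,g\kn(T^{\flat}\otimes T^{\flat})$; comparing this with $\RmL = \text{Rm} + \nabla T^{\flat}\kn\nabla T^{\flat}$ and rearranging is exactly \eqref{eqn:1}. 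This part is routine.

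\emph{The compact case.} Assume $M$ compact (without boundary) and $\gL$ of constant curvature $\lambda$. Then $\text{Ric}_{\scalebox{0.4}{\emph{L}}} = (n-1)\lambda\,\gL$, so $\text{Ric}_{\scalebox{0.4}{\emph{L}}}(T,T) = -(n-1)\lambda$ because $\gL(T,T) = -1$; since $\nabla T^{\flat}(T,\cdot) = 0$ (Lemma~\ref{lemma:Tclosed}, as $T$ is unit and geodesic), contracting the identity of Proposition~\ref{prop:closedT} shows the $(T,T)$-components of $\text{Ric}_{\scalebox{0.4}{\emph{L}}}$ and $\text{Ric}$ agree, whence $\text{Ric}(T,T) = -(n-1)\lambda$. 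On the other hand, as $T$ is a unit geodesic field the endomorphism $\mathcal{D}$ of Lemma~\ref{lemma:Tclosed} obeys the focusing equation $\nabla_T\mathcal{D} = -\mathcal{D}^2 - R(\cdot,T)T$; taking traces (with $\operatorname{div}T = \operatorname{tr}\mathcal{D}$ and $|\nabla T^{\flat}|^2 = \operatorname{tr}\mathcal{D}^2$) yields
\[
T(\operatorname{div}T) \;=\; -\text{Ric}(T,T) - |\nabla T^{\flat}|^2 \;=\; (n-1)\lambda - |\nabla T^{\flat}|^2 .
\]
Now run the trichotomy on $\lambda$. If $\lambda > 0$, then $(M,\gL)$ is a compact Lorentzian spherical space form, which does not exist in any dimension by \cite{CM,Klingler}. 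If $\lambda < 0$, then $T(\operatorname{div}T) \le (n-1)\lambda < 0$ everywhere; since $M$ is compact the integral curves of $T$ are complete and $\operatorname{div}T$ is bounded, yet the inequality forces $\operatorname{div}T$ to decay at least linearly to $-\infty$ along any such curve --- a contradiction (this is a genuine use of the vector field: compact Lorentzian manifolds of constant negative curvature \emph{do} exist). Hence $\lambda = 0$, and the displayed identity becomes $T(\operatorname{div}T) = -|\nabla T^{\flat}|^2 \le -\tfrac{1}{n-1}(\operatorname{div}T)^2$, the inequality being Cauchy--Schwarz applied to the $n-1$ eigenvalues of $\mathcal{D}$ on $T^{\perp}$. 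If $\operatorname{div}T$ were negative at some point $p$, then along the integral curve through $p$ the function $\phi := \operatorname{div}T$ stays negative (it is non-increasing) while $\tfrac{d}{ds}(1/\phi) = -\dot\phi/\phi^2 \ge \tfrac{1}{n-1}$, so $1/\phi$ would eventually become positive --- impossible. Thus $\operatorname{div}T \ge 0$ on $M$; but $\int_M \operatorname{div}T\,dV_g = 0$, so $\operatorname{div}T \equiv 0$, hence $|\nabla T^{\flat}|^2 \equiv 0$ and $T$ is parallel. Finally, with $\lambda = 0$ and $\nabla T^{\flat} = 0$, \eqref{eqn:1} collapses to $\text{Rm} = 0$, so $g$ is flat; and the parallel unit vector field $T$ determines a parallel line field with parallel orthogonal complement, so by completeness and the de Rham decomposition theorem the universal cover of $M$ splits isometrically as $\RR \times N$.

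\emph{The main obstacle.} I expect the substantive work to be the compact case: pinning down $\text{Ric}(T,T) = -(n-1)\lambda$, marrying it to the focusing/Raychaudhuri identity, and --- above all --- converting the resulting pointwise differential inequalities into honest finite-time blow-up contradictions, which crucially exploit completeness of the flow of $T$ on compact $M$. The case $\lambda > 0$ rests entirely on the external nonexistence theorem \cite{CM,Klingler}; the $\lambda \le 0$ analysis is self-contained, and the Kulkarni--Nomizu bookkeeping in the equivalence is immediate once one notes that $(T^{\flat}\otimes T^{\flat})\kn(T^{\flat}\otimes T^{\flat}) = 0$.
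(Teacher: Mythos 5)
Your proposal is correct and follows essentially the same route as the paper: the equivalence via Proposition \ref{prop:closedT} together with the Kulkarni--Nomizu expansion of $\gL \kn \gL$ (using $(T^{\flat}\otimes T^{\flat})\kn(T^{\flat}\otimes T^{\flat})=0$), the case $\lambda>0$ eliminated by the Calabi--Markus/Klingler nonexistence of compact Lorentzian spherical space forms, and the case $\lambda\le 0$ by the identity $\text{Ric}(T,T)=\text{Ric}_{\scalebox{0.4}{\emph{L}}}(T,T)=-(n-1)\lambda$ fed into the traced Raychaudhuri/Riccati inequality for the complete geodesic field $T$. The only (immaterial) difference is that you split $\lambda<0$ and $\lambda=0$ into two explicit blow-up arguments where the paper runs a single Riccati implication chain.
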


\begin{proof}
Suppose a pair $(g,T)$ satisfying \eqref{eqn:1} exists on a compact manifold $M$ of dimension $\geq 3$, and such that $g_{\scalebox{0.4}{\emph{L}}} \defeq g - 2T^{\flat} \otimes T^{\flat}$ has constant curvature $\lambda$.  We first consider the case $\lambda \leq 0$; it turns out that the obstruction in this case occurs at the level of the Ricci tensor, as follows.  Set
$$
\gL \defeq g - 2T^{\flat}\otimes T^{\flat}.
$$
Then
\beqa
\text{Ric}(T,T) &=& \sum_{i=1}^{n-1} \text{Rm}(X_i,T,T,X_i)\nonumber\\
&\overset{\eqref{eqn:Ric1}}{=}& \sum_{i=1}^{n-1} \text{Rm}_{\scalebox{0.4}{\emph{L}}}(X_i,T,T,X_i) = \text{Ric}_{\scalebox{0.4}{\emph{L}}}(T,T),
\eeqa
so that if $(M,\gL)$ has constant curvature $\lambda \leq 0$\,---\,more generally, if $(M,\gL)$ is Einstein with nonpositive Einstein constant\,---\,then
\beqa
\label{eqn:Ric00}
\text{Ric}(T,T) = \text{Ric}_{\scalebox{0.4}{\emph{L}}}(T,T) = (n-1)\lambda \underbrace{\,\gL(T,T)\,}_{-1} \geq 0.
\eeqa
Next, setting $i=j$ in \eqref{eqn:Bochner} and summing over $i = 1,\dots,n-1$ yields the following Bochner-type equation,
\beqa
\label{eqn:Bochner1}
T(\text{div}\,T) = -\text{Ric}(T,T) - \sum_{i=1}^{n-1} \lambda_i^2,
\eeqa
where we've used the fact that 
$
\text{div}\,T = \sum_{i=1}^{n-1} \lambda_i = \text{tr}_g \nabla T^{\flat}.
$
Now, via the Schwarz inequality $$\sum_{i=1}^{n-1} \lambda_i^2 \geq \frac{1}{n-1}(\lambda_1+ \cdots +\lambda_{n-1})^2,$$ \eqref{eqn:Bochner1} reduces to
\beqa
\label{eqn:Bochner3}
\kk(\text{div}\,\kk)\ \leq \ -\text{Ric}(\kk,\kk) - \frac{(\text{div}\,\kk)^2}{n-1},
\eeqa
which permits, in turn, the following well known Riccati analysis: since $T$ is complete ($M$ being compact),
$$
\text{Ric}(T,T) \geq 0 \overset{\eqref{eqn:Bochner3}}{\imp} \text{div}\,T = 0 \overset{\eqref{eqn:Bochner1}}{\imp} \text{Ric}(T,T) = \lambda_i = 0.
$$
It follows that that the case $\text{Ric}(T,T) > 0$ cannot occur, and if $\text{Ric}(T,T) = 0$, then $T$ must be parallel.  The former implies that $\lambda < 0$ cannot occur; the latter, that if $\lambda = 0$, then $\nabla T^{\flat}$ and \eqref{eqn:1} both vanish, in which case $(M,g)$ is flat.  Furthermore, its universal covering splits isometrically as a product $\RR \times N$, by the de Rham Decomposition Theorem (see \cite[p.~384]{Petersen}).  This settles the case $\lambda \leq 0$.  For the case $\lambda > 0$, we will employ a different strategy; indeed, since $\text{Ric}(T,T) < 0$ when $\lambda > 0$ (via \eqref{eqn:Ric00}), the Riccati analysis we applied to \eqref{eqn:Bochner3} is unavailable here.  Instead, we substitute $g = \gL + 2T^{\flat}\otimes T^{\flat}$ into Proposition \ref{prop:closedT} (note that $T^{\flat_{\scalebox{0.4}{\emph{L}}}}\otimes T^{\flat_{\scalebox{0.4}{\emph{L}}}} = T^{\flat}\otimes T^{\flat}$), to obtain
\beqa
\text{Rm}_{\scalebox{0.4}{\emph{L}}} \!\!&=&\!\! \text{Rm} + \nabla T^{\flat} \kn \nabla T^{\flat}\nonumber\\
&\overset{ \eqref{eqn:1}}{=}&\!\!\frac{1}{2}\lambda g \kn g - 2\lambda g \kn (T^{\flat}\otimes T^{\flat})\nonumber\\
&=&\!\!\frac{1}{2}\lambda \gL \kn \gL + 2\lambda \gL \kn (T^{\flat}\otimes T^{\flat}) - 2\lambda \gL \kn (T^{\flat}\otimes T^{\flat})\nonumber\\
&=&\!\!\frac{1}{2}\lambda \gL \kn \gL,\nonumber
\eeqa
where we've used the fact that $(T^{\flat}\otimes T^{\flat}) \kn (T^{\flat}\otimes T^{\flat}) = 0$.  But as mentioned in the Introduction, such a (compact) Lorentzian manifold is impossible when $\lambda > 0$, by \cite{CM} and \cite{Klingler}.
\end{proof}

\subsection{Concluding remarks}
\label{sec:Concluding}
We close with four remarks on Theorem \ref{thm:3}. 
\begin{enumerate}[leftmargin=*]
\item[i.] In the proof of Theorem \ref{thm:3}, it was only the case $\lambda > 0$ that was truly nontrivial, relying in an essential way on the nonexistence of compact Lorentzian spherical space forms \cite{CM,Klingler}.  The case $\lambda \leq 0$, on the other hand, relied instead on the well known Bochner technique \eqref{eqn:Bochner1}.  
\item[ii.] Observe that compact Lorentzian manifolds with constant \emph{negative} curvature certainly do exist; examples can be found, e.g., in \cite{kulkarni} and \cite{Goldman}.  So do flat ones: any $(\mathbb{S}^1 \!\times\! N,-dt^2\oplus h)$, with $(N,h)$ a compact flat Riemannian manifold, yields an example, with $\nabla^{\scalebox{0.4}{\emph{L}}}t$ serving the role of $T$ above.

\item[iii.] The sectional curvature of \eqref{eqn:1} has the following behavior: The term $g \kn (T^{\flat}\otimes T^{\flat})$ vanishes on the frame $\{T,X_1,\dots,X_{n-1}\}$ except on the components
$$
g \kn (T^{\flat}\otimes T^{\flat})(X_i,T,T,X_i) = 1,
$$
so that the sectional curvature of any 2-plane containing $T$ is 
$$
\lambda - 2\lambda = -\lambda,
$$
as opposed to $\lambda$, which would have been the case with constant curvature $\text{Rm} = \frac{1}{2}\lambda g \kn g$.  On the other hand, the term $\nabla T^{\flat} \kn \nabla T^{\flat}$ vanishes except on the components
$$
(\nabla T^{\flat} \kn \nabla T^{\flat})(X_i,X_j,X_j,X_i) = 2\lambda_i\lambda_j \comma i \neq j,
$$
so that 2-planes spanned by $\{X_i,X_j\}$ now have sectional curvature
$$
\lambda - 2\lambda_i\lambda_j.
$$
It is in this sense that $T$ ``breaks" the constant curvature of the term $\text{Rm} = \frac{1}{2}\lambda g \kn g$ in \eqref{eqn:1}.
\item[iv.] Finally, observe that Theorem \ref{thm:3} is uninteresting in dimension 2.  Indeed, although any Riemannian 2-manifold satisfies $\text{Rm} = \frac{1}{2}K g \kn g$ with $K$ the Gaussian curvature (see, e.g., \cite[p.~250]{Lee}), and although $K$ is neither constant nor signed in general, nevertheless $K = -\lambda$ if \eqref{eqn:1} were to be satisfied, because in dimension 2
$$
\nabla T^{\flat} \kn \nabla T^{\flat} = 0 \hspace{.2in}\text{and}\hspace{.2in}  g \kn (T^{\flat} \otimes T^{\flat}) = \frac{1}{2}g \kn g.
$$
But by the Gauss-Bonnet Theorem, if the Euler characteristic is zero (i.e., the manifold is a 2-torus or a Klein bottle)\,---\,recall that this must be the case if the compact 2-manifold supports a nowhere vanishing vector field like $T$\,---\,then the only compact Riemannian 2-manifold with constant curvature is the flat one.
\end{enumerate}

\section*{Appendix: the geometry of plane waves and Penrose's limit}
\label{sec:ppwaves}
For the convenience of the reader, in this Appendix we provide a self-contained introduction to the plane wave metrics $\hpw$ we encountered in \eqref{plw2}; this material already exists in the literature (e.g., \cite{caja,leistner}), though our presentation differs markedly from these.  To begin with, recall that a vector field $N$ on a Lorentzian manifold $(M,g)$ is \emph{null} (or \emph{lightlike}) if $N \neq 0$ and $g(N,N) = 0$.  Here now is the coordinate-independent definition of a plane wave metric in Lorentzian geometry:

\begin{defn}[\cite{leistner}]
\label{def:pp00}
Let $(M,g)$ be a Lorentzian manifold endowed with a parallel null vector field $N$.  Let $R$ denote its curvature endomorphism and $\nabla$ its Levi-Civita connection.  Then $(M,g)$ is a \emph{pp\,-wave} if
\beqa
\label{def:ppwave}
R(X,Y)\,\cdot = 0 \hspace{.1in}\text{for all}\hspace{.1in} X,Y \in \Gamma(N^{\perp}).
\eeqa
If, in addition to this, $R$ also satisfies
\beqa
\label{def:planewave}
\nabla_{\!X}R = 0 \hspace{.1in}\text{for all}\hspace{.1in} X \in \Gamma(N^{\perp}),
\eeqa
then $(M,g)$ is a \emph{plane wave}.
\end{defn}

What is not clear at the moment is why such metrics will necessarily take the form \eqref{plw2}. Showing this involves a number of steps, which we now detail.  First, suppose $(M,g)$ is $(n+1)$-dimensional and choose local coordinates $(x^0,\dots,x^n)$ with respect to which
$
N = \partial_0.
$
Because $N$ is also parallel, we can, by a further change to a ``null coordinate chart" (which we'll continue to denote by $(x^0,\dots,x^n)$), arrange it so that $N = \partial_0 = \text{grad}\,x^1,$
so that $g$ takes the form that we saw in \eqref{nc}:
\beqa
\label{nc2}
(g_{ij}) =
    \begin{pmatrix}
        0 & 1  & 0 & 0 & \cdots & 0\\
        1 & g_{11} & g_{12} & g_{13} & \cdots & g_{1n}\\
        0 & g_{21} &  \textcolor{red}{g_{22}} & \textcolor{red}{g_{23}} & \textcolor{red}{\cdots} & \textcolor{red}{g_{2n}}\\
        0 & g_{31} & \textcolor{red}{g_{32}} & \textcolor{red}{g_{33}} & \textcolor{red}{\cdots}  & \textcolor{red}{g_{3n}}\\
        \vdots & \vdots & \textcolor{red}{\vdots} & \textcolor{red}{\vdots} & \textcolor{red}{\ddots} & \textcolor{red}{\vdots}\\
        0 & g_{n1} & \textcolor{red}{g_{n2}} & \textcolor{red}{g_{n3}} & \textcolor{red}{\cdots}  & \textcolor{red}{g_{nn}}
      \end{pmatrix}\cdot
\eeqa
Consider now the \textcolor{red}{red} submatrix $(g_{ij})_{i,j=2,\dots,n}$; it is positive-definite because the coordinate vectors $\partial_2,\dots,\partial_n$, being orthogonal to the null vector field $\partial_0$ and linearly independent to it, must span a positive-definite subspace (\cite[Lemma~28,~p.~142]{o1983}).  In other words, each codimension 2 embedded submanifold
\beqa
\label{eqn:spaceslice}
\Lambda_{\scalebox{.5}{\emph{b,c}}} \defeq \left\{\big(b,c,x^2,\dots,x^n\big)\right\},
\eeqa
with induced metric
\beqa
\label{eqn:gbc}
\gbc \defeq \sum_{i=2}^n g_{ij}(b,c,x^2,\dots,x^n)dx^i\otimes dx^j,
\eeqa
is a Riemannian manifold.  Furthermore, notice the following fact, a basic one from linear algebra:
$$
\text{$\text{det}\,g = -\text{det}\,(\text{\textcolor{red}{red block}})$};
$$
i.e., the fact that $g$ in \eqref{nc2} is a Lorentzian metric in no way depends on the components $g_{11},g_{12},\dots,g_{1n}$, in the following sense: Changing any of these will change the metric, but it cannot change the nondegeneracy or the Lorentzian signature of the metric.  It is precisely this fact that Penrose exploited when he took his plane wave limit (recall \eqref{newmetric} and \eqref{plw2}), for these are precisely the terms that vanished in the limit $\vep \to 0$\,---\,the novelty of Penrose's limit, therefore, is that, via the homothety \eqref{newmetric}, he managed to do this while \emph{preserving} the geometry. But in fact there is much more geometry here: Because $N = \partial_0$ is also parallel, the Riemann curvature tensor $\text{Rm}_{\scalebox{.5}{\emph{b,c}}}$ of $\gbc$ is intimately related to that of the ambient Lorentzian metric $g$:
\begin{prop}
\label{thm:pp-waves}
The null vector field $\partial_0$ in \eqref{nc2} is parallel if and only if all components of the metric \eqref{nc2} are independent of $x^0$.  In such a case, the Levi-Civita connection $\nabla^{\scalebox{0.4}{b,c}}$ of $(\Lambda_{\scalebox{.5}{b,c}},g_{\scalebox{0.4}{b,c}})$ in \eqref{eqn:spaceslice} is related to the Levi-Civita connection $\nabla$ of $g$ as follows\emph{:} for any $X,Y \in \mathfrak{X}(\Lambda_{\scalebox{.5}{b,c}})$,
\beqa
\label{eqn:LC}
\cd{X}{Y} = \nabla^{\scalebox{0.4}{b,c}}_{\!X}\,Y + \alpha \partial_0,
\eeqa
for some smooth function $\alpha$. As a result, their curvature tensors satisfy
\beqa
\label{eqn:same}
\emph{\text{Rm}}_{\scalebox{.5}{b,c}} = \emph{\text{Rm}}\hspace{.2in}on\hspace{.2in}\mathfrak{X}(\Lambda_{\scalebox{.5}{b,c}}).
\eeqa
\end{prop}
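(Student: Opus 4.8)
The plan is to establish the three assertions of the Proposition in sequence: first the parallelism characterization, then the connection identity \eqref{eqn:LC}, and finally \eqref{eqn:same} as a formal consequence. For the equivalence, I would compute the Christoffel symbols $\Gamma^\beta_{\alpha 0}$ of $g$ directly from \eqref{nc2}. Since every entry $g_{\mu 0}$ of the first column is a constant ($0$ or $1$), the two terms $\partial_\alpha g_{\mu 0}$ and $\partial_\mu g_{\alpha 0}$ in the Christoffel formula drop out, leaving $\Gamma^\beta_{\alpha 0} = \tfrac12 g^{\beta\mu}\,\partial_0 g_{\mu\alpha}$. Because $(g^{\beta\mu})$ is invertible, the vanishing of all these symbols---that is, $\nabla\partial_0 = 0$---is equivalent to $\partial_0 g_{\mu\alpha} = 0$ for every $\mu,\alpha$, which is exactly the statement that no metric component depends on $x^0$. (I would observe at the outset that, independently of this, $\partial_0$ is automatically null and geodesic in the form \eqref{nc2}, since $g_{00}=0$ and $\partial_0^\flat = dx^1$ is closed; parallelism is the genuinely additional hypothesis.)

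Assuming henceforth that the components are $x^0$-independent, I would prove \eqref{eqn:LC} via the Gauss formula. The slice $\Lambda_{\scalebox{.5}{b,c}}$ is a nondegenerate (indeed Riemannian) submanifold, so by the standard theory of nondegenerate semi-Riemannian submanifolds (O'Neill) one has $\nabla_X Y = \nabla^{b,c}_X Y + \mathrm{II}(X,Y)$ with $\mathrm{II}(X,Y)$ in the $2$-dimensional normal space. That normal space contains $\partial_0$, because $g(\partial_0,\partial_i) = g_{0i} = 0$ for $i \geq 2$; since the normal space is nondegenerate and $\partial_0$ is null, there is a second normal vector $V$, not proportional to $\partial_0$, that may be rescaled so that $g(\partial_0,V)=1$. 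Writing $\mathrm{II}(X,Y) = a\,\partial_0 + b\,V$, one finds $b = g(\mathrm{II}(X,Y),\partial_0) = g(\nabla_X Y,\partial_0) = X\!\left(g(Y,\partial_0)\right) - g(Y,\nabla_X\partial_0) = 0$, using that $Y$ is tangent to $\Lambda_{\scalebox{.5}{b,c}}$ (so $g(Y,\partial_0)=0$) and that $\partial_0$ is parallel. Hence $\mathrm{II}(X,Y) = a\,\partial_0$, which is \eqref{eqn:LC} with $\alpha = a$. (Alternatively, one may verify \eqref{eqn:LC} by a direct Christoffel computation: from $g^{10}=1$, $g^{1\nu}=0$ for $\nu\neq 0$, and the fact that the lower $(n-1)\times(n-1)$ block of the inverse metric inverts the red submatrix of \eqref{nc2}, one obtains $\Gamma^1_{ij}=0$ and $\Gamma^k_{ij}=(\Gamma^{b,c})^k_{ij}$ for $i,j,k\ge 2$ along $\Lambda_{\scalebox{.5}{b,c}}$.)

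Then \eqref{eqn:same} follows formally from \eqref{eqn:LC} together with $\nabla\partial_0 = 0$. For $X,Y,Z \in \mathfrak{X}(\Lambda_{\scalebox{.5}{b,c}})$, iterating \eqref{eqn:LC} and using $\nabla_{(\cdot)}\partial_0 = 0$ gives $\nabla_X\nabla_Y Z = \nabla^{b,c}_X\nabla^{b,c}_Y Z$ plus a multiple of $\partial_0$, and likewise for $\nabla_Y\nabla_X Z$ and---since $[X,Y]$ is again tangent to $\Lambda_{\scalebox{.5}{b,c}}$---for $\nabla_{[X,Y]}Z$; hence $R(X,Y)Z = R^{b,c}(X,Y)Z$ plus a multiple of $\partial_0$. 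Pairing with any $W \in \mathfrak{X}(\Lambda_{\scalebox{.5}{b,c}})$ annihilates the $\partial_0$-term, because $g(\partial_0,W) = 0$, and $g$ restricts to $\gbc$ on $T\Lambda_{\scalebox{.5}{b,c}}$; therefore $\text{Rm}(X,Y,Z,W) = \text{Rm}_{\scalebox{.5}{b,c}}(X,Y,Z,W)$, which is \eqref{eqn:same}.

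The step I expect to require the most care is \eqref{eqn:LC}: the crucial point is not merely that $\Lambda_{\scalebox{.5}{b,c}}$ is totally geodesic in some direction, but that its second fundamental form takes values in the single null line $\RR\partial_0$ rather than in the full $2$-dimensional (Lorentzian) normal plane---and this is precisely where the hypothesis that $\partial_0$ is \emph{parallel}, rather than merely null and hypersurface-orthogonal, is essential. Everything downstream of that identity is routine bookkeeping.
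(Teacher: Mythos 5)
Your proof is correct, and its first two steps coincide with the paper's: the equivalence is the same Christoffel computation, and \eqref{eqn:LC} is in both treatments the statement that the second fundamental form of $\Lambda_{b,c}$ takes values in the line $\RR\,\partial_0$ --- the paper gets this by noting that $g(\cd{\partial_i}{\partial_j},\partial_0)=0$ forces $\cd{\partial_i}{\partial_j}$ to have no $\partial_1$-component, while you compute the coefficient of a transverse normal $V$ with $g(\partial_0,V)=1$; these are the same calculation in different clothing. Where you genuinely diverge is in extracting \eqref{eqn:same}. The paper invokes the Gauss Equation, so the discrepancy between $\text{Rm}$ and $\text{Rm}_{b,c}$ is a pair of terms of the form $g(\mathrm{II}(\cdot,\cdot),\mathrm{II}(\cdot,\cdot))$, and these vanish because $\mathrm{II}\propto\partial_0$ and $\partial_0$ is \emph{null} --- this is precisely the step the paper highlights as the one ``where Lorentzian geometry plays the crucial role.'' You instead iterate \eqref{eqn:LC} directly and kill the stray $\partial_0$-terms using that $\partial_0$ is \emph{parallel} (so $\alpha\,\cd{X}{\partial_0}=0$) and normal to the slice (so pairing against a tangent $W$ annihilates what survives); the nullity of $\partial_0$ never enters your curvature step. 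Both hypotheses are available here, so both arguments close. Your route is slightly more self-contained and shows that once $\mathrm{II}$ is proportional to a \emph{parallel} normal field, the induced and ambient curvature $4$-tensors agree on tangent vectors irrespective of that field's causal character; the paper's route isolates instead the fact that a \emph{null}-valued second fundamental form contributes nothing to the Gauss Equation.
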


\begin{proof}
If the null vector field $\partial_0 = \text{grad}\,x^1$ is parallel, then 
$$
\cd{\partial_0}{\partial_i} = \cd{\partial_i}{\partial_0} = 0 \comma i=0,1,2,\dots,n,
$$
which immediately implies that each $\partial_0(g_{ij}) = 0$. Conversely, if each $\partial_0(g_{ij}) = 0$, then each of the Christoffel symbols $\Gamma_{i0}^k$ vanishes, hence each $\cd{\partial_i}{\partial_0} = 0$.  Next, consider each embedded 2-submanifold $\Lambda_{\scalebox{.5}{\emph{b,c}}}$ given by \eqref{eqn:spaceslice}, with its corresponding induced Riemannian metric 
$$
\hc \defeq \sum_{i,j=2}^ng_{ij}(c,x^2,\dots,x^n)dx^i\otimes dx^j.
$$
(Since each $g_{ij}$ is independent of $x^0$, the components $g_{ij}$ are $x^0$-independent.) By the Gauss Equation (\cite[Theorem~8.5,~p.~230]{Lee}), we know that the components $\text{Rm}_{\scalebox{.5}{\emph{b,c}}}(\partial_i,\partial_j,\partial_k,\partial_l)$ of the curvature tensor of $\hc$ are related to those of  $g$ by
\beqa
\text{Rm}(\partial_i,\partial_j,\partial_k,\partial_l) \!\!&=&\!\! \text{Rm}_{\scalebox{.5}{\emph{b,c}}}(\partial_i,\partial_j,\partial_k,\partial_l)\label{eqn:Riem0}\\
&&\hspace{-.5in} -\, g(\sff_{\scalebox{.5}{\emph{b,c}}}(\partial_i,\partial_l),\sff_{\scalebox{.5}{\emph{b,c}}}(\partial_j,\partial_k)) + g(\sff_{\scalebox{.5}{\emph{b,c}}}(\partial_i,\partial_k),\sff_{\scalebox{.5}{\emph{b,c}}}(\partial_j,\partial_l)),\nonumber
\eeqa
where $\sff_{\scalebox{.5}{\emph{b,c}}}$ is the second fundamental form of $\hc$.  But when $\partial_0$ is parallel, this equation simplifies considerably, because in such a case any vector field $\cd{\partial_i}{\partial_j}$, for $i,j=2,\dots,n$, has no $\partial_1$-component; i.e.,
\beqa
\label{eqn:Pi1}
\cd{\partial_i}{\partial_j} = \alpha_{ij}\partial_0+\sum_{k=2}^n\beta_{ij}^k\partial_k,
\eeqa
for some smooth functions $\alpha_{ij},\beta_{ij}^2,\dots,\beta_{ij}^n$. Thus, since $\partial_0$ is orthogonal to $\Lambda_{\scalebox{.5}{\emph{b,c}}}$, the normal component of \eqref{eqn:Pi1} is 
\beqa
\label{eqn:Pi2}
\sff_{\scalebox{.5}{\emph{b,c}}}(\partial_i,\partial_j) \defeq (\cd{\partial_i}{\partial_j})^{\perp} = \alpha_{ij}\partial_0.\nonumber
\eeqa
This verifies \eqref{eqn:LC}. (Since the subspace $S \defeq \text{span}\{\partial_2,\dots,\partial_n\}$ is positive-definite, its orthogonal complement $S^{\perp}$ is necessarily timelike (i.e., of Lorentzian index), and we have the direct sum $T_pM = S_p \oplus S_p^{\perp}$ (see, e.g., \cite[p.~141]{o1983}).  Therefore, $\alpha_{ij} \partial_0$ in  \eqref{eqn:Pi1} is indeed the (unique) normal component of $\cd{\partial_i}{\partial_j}$.) But as $\partial_0$ is \emph{null}\,---\,and here is where Lorentzian geometry plays the crucial role\,---\,the last two terms in \eqref{eqn:Riem0} each vanish identically:
$$
\underbrace{\,g(\sff_{\scalebox{.5}{\emph{b,c}}}(\partial_i,\partial_l),\sff_{\scalebox{.5}{\emph{b,c}}}(\partial_j,\partial_k))\,}_{\alpha_{il}\alpha_{jk}g_{00}} = \underbrace{\,g(\sff_{\scalebox{.5}{\emph{b,c}}}(\partial_i,\partial_k),\sff_{\scalebox{.5}{\emph{b,c}}}(\partial_j,\partial_l))\,}_{\alpha_{ik}\alpha_{jl}g_{00}} = 0.
$$
This confirms \eqref{eqn:same} and completes the proof.
\end{proof}

When endowed with a parallel null vector field, a Lorentzian manifold $(M,g)$ is called a \emph{Brinkmann spacetime}; such spacetimes were first introduced in \cite{brinkmann} and later generalized in \cite{walker}.  We now construct the ``Brinkmann" and ``Rosen" coordinates that characterize plane waves locally:

\begin{prop}
\label{prop:pp-wave}
An $(n+1)$-dimensional Lorentzian manifold $(M,g)$ is a pp-wave if and only if there exist local coordinates $(v,u,x^2,\dots,x^n)$ about any point of $M$ in which $g$ takes the form
\beqa
\label{eqn:Riem1}
(g_{ij}) \defeq
    \begin{pmatrix}
        0 & 1  & 0 & 0 & \cdots & 0\\
        1 & H & 0 & 0 & \cdots & 0\\
        0 & 0 & 1 & 0 & \cdots & 0\\
        0 & 0 & 0 & 1 & \cdots & 0\\
        \vdots & \vdots & \vdots & \vdots & \ddots &  \vdots\\
        0 & 0 & 0 & 0 & \cdots & 1
      \end{pmatrix},
\eeqa
where $H(u,x^2,\dots,x^n)$ is a smooth function independent of $v$\emph{;} $(M,g)$ will be a plane wave if and only if $H$ is a quadratic polynomial in $x^2,\dots,x^n$.  There is a local isometry converting plane waves to the ``Rosen coordinates" of \eqref{plw2}.
\end{prop}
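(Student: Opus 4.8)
The statement bundles three claims: (i) $(M,g)$ is a pp\,-wave iff it admits local coordinates in which $g$ takes the Brinkmann form \eqref{eqn:Riem1} with $H=H(u,x^2,\dots,x^n)$; (ii) granting (i), $(M,g)$ is a plane wave iff $H$ is a quadratic polynomial in $x^2,\dots,x^n$ with $u$-dependent coefficients; (iii) a plane wave moreover admits Rosen coordinates \eqref{plw2}. The reverse implications in (i)--(ii) are bookkeeping: computing the Christoffel symbols of \eqref{eqn:Riem1} shows $\partial_v$ is null and parallel and that the only nonvanishing components of $\mathrm{Rm}$ are $\mathrm{Rm}(\partial_i,\partial_u,\partial_u,\partial_j)=-\tfrac12 H_{ij}$ up to symmetry, so $R(X,Y)=0$ for all $X,Y\in N^{\perp}=\mathrm{span}\{\partial_v,\partial_2,\dots,\partial_n\}$ and $(M,g)$ is a pp\,-wave; and $\nabla_X\mathrm{Rm}=0$ for all such $X$ is then equivalent to $H_{ijk}\equiv 0$, i.e.\ to $H$ being quadratic in $x^2,\dots,x^n$. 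The substantive direction is pp\,-wave $\Rightarrow$ \eqref{eqn:Riem1}, which I would establish in three steps.

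\emph{Step 1.} By definition there is a parallel null field $N$, so Proposition \ref{thm:pp-waves} furnishes null coordinates $(x^0,\dots,x^n)$ with $N=\partial_0=\mathrm{grad}\,x^1$ and all components of \eqref{nc2} independent of $x^0$. Since $\partial_2,\dots,\partial_n$ lie in $N^{\perp}$, the pp\,-wave condition together with \eqref{eqn:same} shows each slice $\Lambda_{b,c}$ is flat; hence there is an $x^1$-dependent (smoothly parametrized) change of $x^2,\dots,x^n$ taking the spatial block to $\delta_{ij}$, and absorbing the $dx^1$-cross terms it produces into the remaining components while leaving $x^0,x^1$ fixed (so that $N=\partial_0=\mathrm{grad}\,x^1$ persists) puts $g$ in the form $2\,dx^0dx^1+a(dx^1)^2+2b_i\,dx^1dy^i+\sum_i(dy^i)^2$ with $a,b_i$ depending on $x^1,y^2,\dots,y^n$ only.

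\emph{Step 2 (the main obstacle).} The remaining obstruction is the twist $b_i$: one can absorb $2b_i\,dx^1dy^i$ by a shift $x^0\mapsto x^0+\psi(x^1,y)$ exactly when the spatial $1$-form $b_i\,dy^i$ is closed, i.e.\ $F_{ij}:=\partial_{y^i}b_j-\partial_{y^j}b_i=0$. The piece of the pp\,-wave hypothesis not yet spent is $R(\partial_{y^i},\partial_{y^j})\partial_{x^1}=0$; unwinding it with the Christoffels $\Gamma^{k}_{i\,x^1}=\tfrac12 F_{ik}$ and the facts $\nabla_{\partial_{y^i}}\partial_{x^0}=0$, $\nabla_{\partial_{y^i}}\partial_{y^j}\in\mathrm{span}\,\partial_{x^0}$ (the latter from \eqref{eqn:LC} of Proposition \ref{thm:pp-waves} together with the slice metric now being Euclidean) shows its $\partial_{y^k}$-component is a constant multiple of $\partial_{y^k}F_{ij}$; so $F_{ij}=F_{ij}(x^1)$ depends on $x^1$ alone, yet need not vanish. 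The only freedom left is the $x^1$-dependent rotation $y^i\mapsto A^i_j(x^1)y^j$, $A(x^1)\in O(n-1)$, which preserves $\sum_i(dy^i)^2$ and replaces the curl of $b$ by $2A\dot A^{\mathsf T}-AFA^{\mathsf T}$; choosing $A$ to solve the linear ODE $\dot A=-\tfrac12 A(x^1)F(x^1)$ with $A=I$ at a base point\,---\,its solution staying orthogonal precisely because $F$ is antisymmetric\,---\,makes the curl vanish identically. Then $b_i\,dy^i=d\psi$ locally, the shift in $x^0$ removes the cross terms, and relabeling $v=x^0$, $u=x^1$, $x^i=y^i$ puts $g$ in the form \eqref{eqn:Riem1} with $H=H(u,x)$. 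This is where I expect the real work: the pp\,-wave condition constrains only the spatial derivative of the twist, and the crucial observation is that the orthogonal gauge freedom along the $x^1$-lines is exactly what is needed to dispose of the residual $x^1$-dependent antisymmetric part of $F$.

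\emph{Step 3.} For the Rosen form, reduce to $H=\tfrac12 A_{ij}(u)x^ix^j$ by a $u$-dependent translation of the $x^i$ and a shift of $v$ killing the linear and constant parts of $H$, let $E(u)$ be the fundamental solution of the matrix ``oscillator'' equation $\ddot E=\tfrac12 A(u)E$ with $E(u_0)=I$, $\dot E(u_0)=0$ (so $E^{\mathsf T}\dot E$ stays symmetric and $E$ stays invertible near $u_0$), and change coordinates by
\begin{equation}
\label{eqn:Rosen2}
\tilde x^0=u \comma \tilde x^i=(E^{-1})^i_j(u)\,x^j \comma \tilde x^1=v+\tfrac12\,(\dot E E^{-1})_{ij}(u)\,x^ix^j .
\end{equation}
A direct substitution then gives $g=2\,d\tilde x^0 d\tilde x^1+(E^{\mathsf T}E)_{ij}(\tilde x^0)\,d\tilde x^i d\tilde x^j$, which is precisely the Rosen form \eqref{plw2}; read in the opposite direction, \eqref{eqn:Rosen2} is the isometry invoked in the proof of Theorem \ref{cor:1}. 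This settles all three parts.
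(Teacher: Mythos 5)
Your proof is correct, but it takes a genuinely different route through the hard direction (pp\,-wave $\Rightarrow$ Brinkmann form \eqref{eqn:Riem1}), so it is worth comparing. Both arguments start identically: Proposition \ref{thm:pp-waves} supplies null coordinates in which all components are $x^0$-independent, and the pp\,-wave condition together with \eqref{eqn:same} makes every slice $\Lambda_{\scalebox{.5}{\emph{b,c}}}$ flat. From there the paper builds an orthonormal frame $X_2,\dots,X_n$ tangent to the slices by parallel transport inside each flat slice, notes that the second fundamental form of the slices is proportional to the null $\partial_0$ so that all brackets of $\{\partial_0,\partial_1,X_2,\dots,X_n\}$ have only a $\partial_0$-component, deduces that each $X_i^{\flat}$ is closed, and integrates $X_i^{\flat}=d\tilde{x}^i$ to produce the coordinates; the elimination of the $g_{1i}$ cross-terms is thereby folded into the closedness argument. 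You instead flatten the spatial block first and attack the residual twist $2b_i\,dx^1dy^i$ directly: the as-yet-unused curvature condition $R(\partial_{y^i},\partial_{y^j})\partial_{x^1}=0$ forces $\partial_{y^k}F_{ij}=0$ (your identification of the $\partial_{y^k}$-component with a multiple of $\partial_{y^k}F_{ij}$ is right once one combines $\partial_iF_{jk}-\partial_jF_{ik}$ with the identity $dF=0$), and the $x^1$-dependent rotation solving $\dot A=-\tfrac12 AF$ \,---\, which stays orthogonal precisely because $F$ is antisymmetric \,---\, gauges the curl away, after which $b_i\,dy^i$ is exact and is absorbed into $x^0$. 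Your version has the merit of making explicit exactly which piece of the pp\,-wave hypothesis, beyond flatness of the slices, disposes of the twist; in the paper this is hidden inside the frame construction and the cancellation of the bracket terms. For the Rosen form the two arguments run in opposite directions: the paper exhibits the Rosen-to-Brinkmann change of coordinates \eqref{eqn:Rosen2} via the matrix $C$ with $C^i_k g_{ij}C^j_l=\delta_{kl}$, whereas you construct the inverse Brinkmann-to-Rosen map from the fundamental solution of the matrix oscillator equation $\ddot E=\tfrac12 AE$ (with $E^{\mathsf T}\dot E$ symmetric); since the last sentence of the proposition asserts the existence of Rosen coordinates for a plane wave already given in Brinkmann form, your direction is the one literally required, the two being equivalent up to $C=E^{-1}$ and conventions. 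Your reverse implications agree with the paper's computation via the Christoffel symbols \eqref{eqn:Christ}. The only soft spot is the reduction of $H$ to a purely quadratic form in Step 3: killing the linear-in-$x$ part of $H$ by a $u$-dependent translation requires solving a second-order ODE for the translation vector (not inverting the coefficient matrix $A$), but that is standard and always possible.
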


\begin{proof}
($\Rightarrow$) Let $(M,g)$ be a pp-wave as in Definition \ref{def:pp00}, with  parallel null vector field $N$.  Choose a ``null coordinate system" $(x^0,x^1,x^2,\dots,x^n)$ as in \eqref{nc2}, with respect to which $N = \partial_0$:
$$
(g_{ij}) =
    \begin{pmatrix}
        0 & 1  & 0 & 0 & \cdots & 0\\
        1 & g_{11} & g_{12} & g_{13} & \cdots & g_{1n}\\
        0 & g_{21} &  \textcolor{red}{g_{22}} & \textcolor{red}{g_{23}} & \textcolor{red}{\cdots} & \textcolor{red}{g_{2n}}\\
        0 & g_{31} & \textcolor{red}{g_{32}} & \textcolor{red}{g_{33}} & \textcolor{red}{\cdots}  & \textcolor{red}{g_{3n}}\\
        \vdots & \vdots & \textcolor{red}{\vdots} & \textcolor{red}{\vdots} & \textcolor{red}{\ddots} & \textcolor{red}{\vdots}\\
        0 & g_{n1} & \textcolor{red}{g_{n2}} & \textcolor{red}{g_{n3}} & \textcolor{red}{\cdots}  & \textcolor{red}{g_{nn}}
      \end{pmatrix}\cdot
$$
As $\partial_0$ is parallel, all the $g_{ij}$'s here are independent of $x^0$, while the  \textcolor{red}{red} submatrix $(g_{ij})_{i,j=2,\dots,n}$ is positive-definite (recall \eqref{eqn:spaceslice} and \eqref{eqn:gbc}).
Also,
\beqa
\label{app:grad}
\partial_0 = \text{grad}\,x^1.
\eeqa
We now proceed to construct the coordinates \eqref{eqn:Riem1}.
\begin{enumerate}[leftmargin=*,itemsep=.1in]
\item[1.] At the origin ${\bf 0}$ of the coordinate system $(x^0,x^1,\dots,x^n)$, choose orthonormal vectors $X_2|_{\bf0},\dots,X_n|_{\bf0}$ that are also orthogonal to $\partial_0|_{\bf0}$.  Parallel transport them along the $x^1$-axis, and denote the resulting vector fields by $X_2,\dots,X_n$. As the $x^1$-axis is an integral curve of $\partial_1$,
\beqa
\label{app:pp0}
\cd{\partial_1}{X_i}\Big|_{(0,x^1,0,\dots,0)} = 0\hspace{.2in}\text{for each $i = 2,\dots,n$}.
\eeqa
Not only do these vector fields remain orthonormal, but because $X_2|_{\bf0},\dots,X_n|_{\bf0}$ were orthogonal to $\partial_0|_{\bf0}$, and  because along the $x^1$-axis
\beqa
\label{app:vorth}
\partial_1(\ip{\partial_0}{X_i}) = \ip{\underbrace{\cd{\partial_1}{\partial_0}}_{0}}{X_i} + \ip{\partial_0}{\hspace{-.05in}\underbrace{\cd{\partial_1}{X_i}}_{\text{$0$ by \eqref{app:pp0}}}\hspace{-.02in}} = 0,
\eeqa
they also remain orthogonal to $\partial_0$.

\item[2.] Next, at each point on the $x^1$-axis, parallel transport each $X_2,\dots,X_n$ along the $\partial_2$-integral curve passing  through that point; once again, denote the resulting vector fields by $X_2,\dots,X_n$, now defined on the $(x^1,x^2)$-plane.  These vector fields satisfy
\beqa
\label{app:pp1*}
\cd{\partial_2}{X_i}\Big|_{(0,x^1,x^2,0,\dots,0)} = 0\hspace{.2in}\text{for each $i = 2,\dots,n$},
\eeqa
they are orthonormal, and furthermore they remain orthogonal to $\partial_0$, since along the $(x^1,x^2)$-plane,
\beqa
\label{app:vorth2}
\partial_2(\ip{\partial_0}{X_i})  = \ip{\underbrace{\cd{\partial_2}{\partial_0}}_{0}}{X_i} + \ip{\partial_0}{\hspace{-.05in}\underbrace{\cd{\partial_2}{X_i}}_{\text{$0$ by \eqref{app:pp1*}}}\hspace{-.02in}}  = 0.
\eeqa

\item[3.] Next, at each point on the $(x^1,x^2)$-plane, parallel transport each $X_2,\dots,X_n$ along the corresponding $\partial_3$-integral curve passing  through that point; once again, denote the resulting vector fields by $X_2,\dots,X_n$.  These are defined on the $(x^1,x^2,x^3)$-plane, satisfy
\beqa
\label{app:pp1}
\cd{\partial_3}{X_i}\Big|_{(0,x^1,x^2,x^3,0,\dots,0)} = 0\hspace{.2in}\text{for each $i = 2,\dots,n$},
\eeqa
and once again remain orthonormal and orthogonal to $\partial_0$ (by an argument identical to \eqref{app:vorth} and \eqref{app:vorth2}).  Moreover, by the definition of pp-wave, together with the fact that each $[\partial_i,\partial_j]  = 0$,
$$
\cd{\partial_3}{\cd{\partial_2}{X_i}}\Big|_{(0,x^1,x^2,x^3,0,\dots,0)} \overset{\eqref{def:ppwave}}{=} \cd{\partial_2}{\cd{\partial_3}{X_i}}\Big|_{(0,x^1,x^2,x^3,0,\dots,0)} \overset{\eqref{app:pp1}}{=} 0.
$$
This means that each $\cd{\partial_2}{X_i}$ is the $\partial_3$-parallel transport of the (zero) vector \eqref{app:pp1*}\,---\,which implies that $\cd{\partial_2}{X_i}$ must itself be the zero vector field along each $\partial_3$-integral curve.  We thus have, for each $i=2,\dots,n$,
$$
\cd{\partial_2}{X_i}\Big|_{(0,x^1,x^2,x^3,0,\dots,0)} = \cd{\partial_3}{X_i}\Big|_{(0,x^1,x^2,x^3,0,\dots,0)} = 0.
$$
\item[4.] Repeat this process, iteratively, along the integral curves of $\partial_4,\dots,\partial_n$, as well as those of $\partial_0$.  Denote the resulting vector fields, now defined on the entirety of our coordinate chart $(x^0,x^1,\dots,x^n)$, by $X_2,\dots,X_n$ once again.  The flatness condition \eqref{def:ppwave} of the pp-wave will guarantee that
\beqa
\label{app:frame2}
\cd{\partial_0}{X_i} = \cd{\partial_2}{X_i} = \cd{\partial_3}{X_i} = \cdots = \cd{\partial_n}{X_i} = 0\hspace{.2in}
\eeqa
for each $i=2,\dots,n$.  Furthermore, the vector fields $X_2,\dots,X_n$ will remain orthonormal, as well as orthogonal to $\partial_0$, at each step.

\item[5.] Being orthogonal to $\partial_0$, each vector field $X_2,\dots,X_n$ is in the span of $\{\partial_0,\partial_2,\partial_3,\dots,\partial_n\}$; together with \eqref{app:frame2}, this implies that each $\cd{X_i}{X_j} = 0$ and as a consequence their Lie brackets all vanish:
$
[X_i,X_j] = 0.
$
Since it is also the case that each $[\partial_0,X_i] = 0$, this means that smooth coordinates $(v,u,\tilde{x}^2,\dots,\tilde{x}^n)$ exist with respect to which
$$
\partial_0 = \partial_v \commas X_2 = \partial_{\tilde{2}} \commas \dots \commas X_n = \partial_{\tilde{n}},
$$
where $\partial_{\tilde{2}} \defeq \partial/\partial \tilde{x}^2, \dots,\partial_{\tilde{n}} \defeq \partial/\partial \tilde{x}^n$.  

\item[6.] In these new coordinates, the metric components take the form
$$
(g_{ij}) =
    \begin{pmatrix}
        0 & g_{uv}  & 0 & 0 & \cdots & 0\\
        g_{vu} & g_{uu} & g_{u\tilde{2}} & g_{u\tilde{3}} & \cdots & g_{u\tilde{n}}\\
        0 & g_{\tilde{2}u} &  1 & 0 & \cdots & 0\\
        0 & g_{\tilde{3}u} & 0 & 1 & \cdots  & 0\\
        \vdots & \vdots & \vdots & \vdots & \ddots & \vdots\\
        0 & g_{\tilde{n}u} & 0 & 0 & \cdots  & 1
      \end{pmatrix}\cdot
$$
We would now like to modify these coordinates to make $g_{uv}$ equal to 1, as before.  To that end, recall our initial coordinates $(x^0,x^1,x^2,\dots,x^n)$ and observe that
\beqa
\label{app:uv1}
\underbrace{\,\ip{\partial_u}{\partial_v}\,}_{\text{nowhere vanishing}}\hspace{-.17in} =  g\Big(\sum_{k=0}^n\frac{\partial x^k}{\partial u}\partial_k,\partial_0\Big) = \frac{\partial x^1}{\partial u}\underbrace{\,\ip{\partial_1}{\partial_0}\,}_{1},
\eeqa
so that $\frac{\partial x^1}{\partial u}$ is nowhere vanishing.  In fact, in terms of our new coordinates $(v,u,\tilde{x}^2,\dots,\tilde{x}^n)$, $x^1$ is a function of $u$ alone; indeed,
$$
\frac{\partial x^1}{\partial \tilde{x}^i} = \ip{\!\!\underbrace{\text{grad}\,x^1\!\!}_{\text{$\partial_0$, by \eqref{app:grad}}}}{\partial_{\,\tilde{i}}} = \ip{\partial_0}{X_i} = 0.
$$ 
Similarly, $\frac{\partial x^1}{\partial v} = \ip{\partial_0}{\partial_v} = 0$, so that $x^1 = x^1(u)$.  As a consequence, let us modify our coordinates by replacing $u$ with $\tilde{u} \defeq x^1(u)$; note that $(v,\tilde{u},\tilde{x}^2,\dots,\tilde{x}^n)$ does indeed constitute a smooth coordinate chart, since the Jacobian matrix has determinant $\frac{\partial \tilde{u}}{\partial u} = \frac{\partial x^1}{\partial u} \neq 0$.  Furthermore,
$$
\partial_{\tilde{u}} = \Big(\frac{\partial x^1}{\partial u}\Big)^{\!-1} \partial_u \imp g_{\tilde{u}v}  \overset{\eqref{app:uv1}}{=} 1,
$$
as desired.  Let us discard all tildes ``$\sim$" for convenience, so that our coordinate system becomes ``$(v,u,x^2,\dots,x^n)$," this time with $g_{uv} = 1$.

\item[7.] Now, as each $\cd{X_i}{X_j} = 0$ and each $X_i = \partial_i$, we have that each $\cd{\partial_i}{\partial_j} = 0$.  We now use this to simplify the other metric coefficients.  At the moment, our metric and its inverse take the form  
$$
    \underbrace{\,\begin{pmatrix}
        0 & 1  & 0 & 0 & \cdots & 0\\
        1 & g_{uu} & g_{u2} & g_{u3} & \cdots & g_{un}\\
        0 & g_{2u} &  1 & 0 & \cdots & 0\\
        0 & g_{3u} & 0 & 1 & \cdots  & 0\\
        \vdots & \vdots & \vdots & \vdots & \ddots & \vdots\\
        0 & g_{nu} & 0 & 0 & \cdots  & 1
      \end{pmatrix}\,}_{\text{$(g_{ij})$}} ,
    \underbrace{\,\begin{pmatrix}
        \scriptstyle{\text{$-g_{uu}+\sum_{k=2}^n g_{uk}^2$}} & 1  & -g_{u2} & -g_{u3} & \cdots & -g_{un}\\
        1 & 0 & 0 & 0 & \cdots & 0\\
        -g_{u2} & 0 & 1 & 0 & \cdots & 0\\
        -g_{u3} & 0 & 0 & 1 & \cdots  & 0\\
        \vdots & \vdots & \vdots & \vdots & \ddots & \vdots\\
        -g_{un} & 0 & 0 & 0 & \cdots  & 1
      \end{pmatrix}\,}_{\text{$(g^{ij})$}}\cdot
$$
For $i,j=2,\dots,n$, the Christoffel symbol $\Gamma_{ij}^v = \frac{1}{2}(\partial_ig_{uj} + \partial_jg_{ui})$, hence
\beqa
\label{app:ijk0}
\cd{\partial_i}{\partial_j} = 0 \imp \Gamma_{ij}^v = 0 \imp \partial_ig_{uj} = -\partial_jg_{ui}.
\eeqa
Setting $i=j$ yields $\partial_ig_{ui} = 0$, in which case
$$
\partial_i\underbrace{\,(\partial_ig_{uj} + \partial_jg_{ui})\,}_{0} = \partial_i^2g_{uj} + 0 \imp \partial_i^2g_{uj} = 0.
$$ 
But as we now show, in fact every $\partial_k\partial_ig_{uj} = 0$, so that  each $\partial_ig_{uj}$ is a function of $u$ alone.  To see why this is true, first observe that, because the  Christoffel symbols  $\Gamma_{iu}^u =  0$ for $i=2,\dots,n$, 
$$
\ip{\cd{\partial_i}{\partial_u}}{\partial_j} = \Gamma_{iu}^j = \frac{1}{2}(\partial_ig_{uj}-\partial_jg_{ui}).
$$
Next, using the flatness condition \eqref{def:ppwave} once again (with $V = \partial_u$),
\beqa
\underbrace{\,\partial_k(\ip{\cd{\partial_i}{\partial_u}}{\partial_j})\,}_{\text{$\frac{1}{2}(\cancel{\partial_k\partial_ig_{uj}}-\partial_k\partial_jg_{ui})$}} \!\!&=&\!\! \ip{\cd{\partial_k}{\!\cd{\partial_i}{\partial_u}}}{\partial_j} + \ip{\cd{\partial_i}{\partial_u}}{\underbrace{\cd{\partial_k}{\partial_j}}_{0}}\nonumber\\
&\overset{\eqref{def:ppwave}}{=}&\!\! \ip{\cd{\partial_i}{\!\cd{\partial_k}{\partial_u}}}{\partial_j}\nonumber\\
&=&\!\! \ip{\cd{\partial_i}{\!\cd{\partial_k}{\partial_u}}}{\partial_j} + \ip{\cd{\partial_k}{\partial_u}}{\underbrace{\cd{\partial_i}{\partial_j}}_{0}}\nonumber\\
&=&\!\! \partial_i(\ip{\cd{\partial_k}{\partial_u}}{\partial_j})\nonumber\\
&=&\!\! \frac{1}{2}(\cancel{\partial_i\partial_kg_{uj}}-\partial_i\partial_jg_{uk}),\nonumber 
\eeqa
which yields
$
\partial_k\partial_jg_{ui} = \partial_i\partial_jg_{uk}.
$
On the other hand, by \eqref{app:ijk0} we have that
$$
\partial_j(\partial_ig_{uk}+\partial_kg_{ui}) = 0 \imp \partial_k\partial_jg_{ui} = -\partial_i\partial_jg_{uk},
$$
so that each $\partial_k\partial_jg_{ui} = 0$.

\item[8.] We now go one step further and show that in fact each $g_{ui}$ is a function of $u$ alone.  Indeed, recalling \eqref{app:pp0}, we have that at each point $(0,t,0,\dots,0)$ on the $x^1$-axis of our original coordinate system $(x^0,x^1,x^2,\dots,x^n)$,
\beqa
\label{app:final}
\underbrace{\,\cd{\partial_u}{\partial_i}\Big|_{(0,t,0,\dots,0)}\,}_{\text{$\partial_u,\partial_i$ are with respect to $(v,u,x^2,\dots,x^n)$}}\hspace{-.6in} = \hspace{-.4in}\overbrace{\,\sum_{k=0}^n\frac{\partial x^k}{\partial u}\cd{\partial_k}{X_i}\bigg|_{(0,t,0,\dots,0)}\,}^{\text{$\partial_k$ here is with respect to $(x^0,x^1,x^2,\dots,x^n)$}}\hspace{-.3in} \overset{\eqref{app:pp0},\eqref{app:frame2}}{=} 0.
\eeqa
Now, because $u = u(x^1)$ satisfies $\frac{\partial u}{\partial x^1} \neq 0$, the $x^1$-axis will be a smooth curve in the coordinates $(v,u,x^2,\dots,x^n)$ that is  strictly monotonic in $u$; let us parametrize this curve as $t \mapsto (v(t),u(t),x^2(t),\dots,x^n(t))$.  Monotonicity in $u(t)$ means that this curve will intersect every $u=\text{const}.$ hypersurface exactly once, and so any point $p$ will have its $u$-coordinate equal to $u(t_*)$ for some unique parameter value $t_*$.  Since each $\partial_i g_{uj}$ is a function of $u$ alone, its value at $p$ depends only on the $u = \text{const}.$ hypersurface containing $p$; as a result, if $p$ has $u$-coordinate $u(p) = u(t_*)$, then
$$
\hspace{.3in}\partial_i g_{uj}\Big|_{\text{$p\,=\,(v(p),u(p),x^2(p),\dots,x^n(p))$}} = \partial_i g_{uj}\Big|_{\text{$p_* \defeq (v(t_*),u(t_*),x^2(t_*),\dots,x^n(t_*))$}}.
$$
As a consequence,
$$
\partial_i g_{uj}\Big|_p  = \partial_i g_{uj}\Big|_{\text{$p_*$}} = \ip{\!\!\!\!\!\underbrace{\cd{\partial_i}{\partial_u}}_{\text{$\cd{\partial_u}{\partial_i}\big|_{\text{$p_*$}}\!\!\!\!\!\!\overset{\eqref{app:final}}{=}\!0$}}\!\!\!\!}{\partial_j} + \ip{\partial_u\,}{\!\underbrace{\cd{\partial_i}{\partial_j}}_{0}}\bigg|_{\text{$p_*$}} = 0.
$$
Thus in fact each $h_i \defeq g_{ui}$ is a function of $u$ alone, and the metric components take the simpler form
$$
    (g_{ij}) = \begin{pmatrix}
        0 & 1  & 0 & 0 & \cdots & 0\\
        1 & g_{uu} & h_2(u) & h_3(u) & \cdots & h_n(u)\\
        0 & h_{2}(u) &  1 & 0 & \cdots & 0\\
        0 & h_3(u) & 0 & 1 & \cdots  & 0\\
        \vdots & \vdots & \vdots & \vdots & \ddots & \vdots\\
        0 & h_n(u) & 0 & 0 & \cdots  & 1
      \end{pmatrix}\cdot
$$
\item[9.] To bring this metric into the final form of \eqref{eqn:Riem1}, for each $i=2,\dots,n$ let $H_i$ denote an antiderivative of $h_i$, then define a new (and final!) set of coordinates $(\tilde{v},\tilde{u},\tilde{x}^2,\dots,\tilde{x}^n)$ by
$$
\left\{\begin{array}{lcl}
\tilde{v} &\defeq& v,\\
\tilde{u} &\defeq& u,\\
\tilde{x}^2 &\defeq& x^2+H_2(u),\\
&\vdots& \\
\tilde{x}^n &\defeq& x^n+H_n(u).
\end{array}
\right.
$$
(As the Jacobian matrix has determinant 1, $(\tilde{v},\tilde{u},\tilde{x}^2,\dots,\tilde{x}^n)$ does constitute a smooth coordinate chart.)  It follows that
$$
\partial_{\tilde{v}} = \partial_v \commas \partial_{\tilde{u}} = \partial_u - \sum_{k=2}^nh_k(u)\partial_k \commas \partial_{\tilde{2}} = \partial_2 \commas \dots \commas \partial_{\tilde{n}} = \partial_n.
$$
Setting $H \defeq \ip{\partial_{\tilde{u}}}{\partial_{\tilde{u}}}$ now brings the metric to the form \eqref{eqn:Riem1}.
\end{enumerate}
\vskip 6pt
$(\Leftarrow)$ Conversely, suppose that at every point on an $(n+1)$-dimensional Lorentzian manifold $(M,g)$ there exist local coordinates $(v,u,x^2,\dots,x^n)$ in which $g$ takes the form \eqref{eqn:Riem1}; observe that $\partial_v$ is a parallel null vector field.  For such a metric, the nonvanishing Christoffel symbols are
\beqa
\label{eqn:Christ}
\cd{\partial_{i}}{\partial_u}= \cd{\partial_u}{\partial_{i}}= \frac{H_{i}}{2}\partial_v \comma \cd{\partial_u}{\partial_u} = \frac{H_u}{2}\partial_v - \frac{1}{2}\sum_{i=2}^n H_i\partial_{i},
\eeqa
for $i=x^2,\dots,x^n$. It follows that $R(\partial_{i},\partial_{j})\partial_{k} = 0$ for all $i,j,k=x^2,\dots,x^n$, as does
$
R(\partial_{i},\partial_{j})\partial_u = \frac{H_{ij}}{2}\partial_v - \frac{H_{ji}}{2}\partial_v = 0. 
$
Thus in fact $R(X,Y)V = 0$ for all $X,Y \in \Gamma(\partial_v^{\perp})$ and \emph{all} $V \in \mathfrak{X}(M)$. (This is usually taken to be the definition of pp-wave in the literature (e.g., \cite{leistner}), although, as we've just shown, the weaker condition \eqref{def:ppwave} suffices.)
\vskip 6pt
Next, suppose that a pp-wave also satisfies $\nabla_{\!X}R = 0$ for all $X \in \Gamma(\partial_v^{\perp})$, so that in particular $\nabla_{\!X}\text{Rm} = 0$.  Then, in Brinkmann coordinates \eqref{eqn:Riem1},
\beqa
\underbrace{\,(\nabla_{\!\partial_k}\text{Rm})(\partial_u,\partial_i,\partial_j,\partial_u)\,}_{0} \!\!&=&\!\! \partial_k(\text{Rm}(\partial_u,\partial_i,\partial_j,\partial_u)) - \text{Rm}(\underbrace{\cd{\partial_k}{\partial_u}}_{\text{$\frac{H_k}{2}\partial_v$}},\partial_i,\partial_j,\partial_u)\nonumber\\
&&\hspace{-.05in} -\,\text{Rm}(\partial_u,\!\underbrace{\cd{\partial_k}{\partial_i}}_{0},\partial_j,\partial_u) -\text{Rm}(\partial_u,\partial_i,\!\underbrace{\cd{\partial_k}{\partial_j}}_{0},\partial_u)\nonumber\\
&&\hspace{1.6in}-\,\text{Rm}(\partial_u,\partial_i,\partial_j,\!\underbrace{\cd{\partial_k}{\partial_u}}_{\text{$\frac{H_k}{2}\partial_v$}})\nonumber\\
&=&\!\! \partial_k\big(g(\nabla_{\!\partial_u}\!\underbrace{\nabla_{\!\partial_i}\partial_j}_{0} - \nabla_{\!\partial_i}\!\underbrace{\nabla_{\!\partial_u}\partial_j}_{\text{$\frac{H_j}{2}\partial_v$}},\partial_u)\big)\nonumber\\
&=& -\frac{H_{kij}}{2}g(\partial_v,\partial_u) = -\frac{H_{kij}}{2},\nonumber
\eeqa
hence $H_{kij} = 0$ for all $i,j,k=2,\dots,n$.  Thus $H(u,x^2,\dots,x^n)$ is quadratic in $x^2,\dots.x^n$ if and only if $\nabla_{\!X}R = 0$ for all $X \in \Gamma(\partial_v^{\perp})$.
\vskip 6pt
Finally, the isometry converting a plane wave in ``Rosen coordinates" \eqref{plw2} to one in ``Brinkmann coordinates" \eqref{eqn:Riem1} can be found, e.g., in \cite{blau2}; we briefly summarize it here.  In Rosen coordinates \eqref{plw2}, let $\gamma(\tilde{x}^0)$ denote the integral curve of $\partial_{\tilde{x}^0}$ through the origin: $\gamma(\tilde{x}^0) \defeq (\tilde{x}^0,0,\dots,0)$.  Let $\{X_1,\dots,X_{n-2}\}$ be an orthonormal set of (spacelike) vectors orthogonal to $\gamma'(\tilde{x}^0)$ and parallel transported along $\gamma(\tilde{x}^0)$.  Writing each $X_k$ in the coordinate frame \eqref{plw2} as
$
X_k = p_k(\tilde{x}^0) \partial_{\tilde{x}^0} + \sum_{i=2}^n f_k^i(\tilde{x}^0) \partial_{\tilde{x}^i}\big|_{\gamma(\tilde{x}^0)},
$
one can show that
\beqa
\label{eqn:f_id}
g_{ij}f_k^if_l^j = \delta_{kl} \comma g_{ij}\dot{f}_k^i f_l^j = g_{ij}f_k^i \dot{f}_l^j,\hspace{.2in}k,l = 2,\dots,n,
\eeqa
where each $g_{ij} = g_{ij}(\tilde{x}^0,0,\dots,0)$ as in \eqref{plw2} and each $\dot{f}_k^i \defeq \frac{df_k^i}{d\tilde{x}^0}$.  Now define new coordinates $(v,u,x^2,\dots,x^n)$ by
\beqa
\label{eqn:Rosen2}
\left\{\begin{array}{lcl}
\tilde{x}^0 &\defeq& u,\\
\tilde{x}^1 &\defeq& v - \frac{1}{2}\sum_{i,j,k,l=2}^ng_{ij}(\tilde{x}^0,0,\dots,0)\dot{f}_k^i(\tilde{x}^0)f^j_l(\tilde{x}^0)^jx^kx^l,\nonumber\\
\tilde{x}^i &\defeq& \sum_{k=2}^nf_k^i(\tilde{x}^0) x^k.
\end{array}\right.
\eeqa
Substituting these into \eqref{plw2} and using \eqref{eqn:f_id} will then yield \eqref{eqn:Riem1}, with
\beqa
\label{eqn:HH}
H(u,x^2,\dots,x^n) \defeq \!\!\!\!\sum_{k,l=2,\dots,n}\!\!\!\! A_{kl}(u)x^kx^l,
\eeqa
where $A_{kl}(u) \defeq -\sum_{i,j,k,l=2}^n \big(\dot{g}_{ij}(u)\dot{f}_k^i(u)f_l^j(u) + g_{ij}(u)\ddot{f}_k^i(u)f_l^j(u)\big)$.
\end{proof}

Finally, we mention that there is an even more geometric way to define pp-waves than Definition \ref{def:pp00}, via the following vector bundle that sits above any Brinkmann spacetime (see \cite{caja} and \cite{leistner}):

\begin{prop}
\label{thm:QB}
Let $(M,g)$ be a Lorentzian manifold and $N$ a parallel null vector field, with orthogonal complement $N^{\perp}  \subseteq TM$.  Then the vector bundle $N^{\perp}/N$ admits a positive-definite inner product $\bar{g}$,
$$
\bar{g}([X],[Y]) \defeq g(X,Y)\hspace{.2in}\text{for all}\hspace{.2in}[X], [Y] \in \Gamma(N^{\perp}/N),
$$
and a corresponding linear connection $\overline{\nabla}\colon \mathfrak{X}(M)\times \Gamma(N^{\perp}/N) \lra \Gamma(N^{\perp}/N)$,
$$
\conn{V}{Y} \defeq [\cd{W}{Y}] \hspace{.2in}\text{for all}\hspace{.2in} V \in \mathfrak{X}(M)\hspace{.2in}\text{and}\hspace{.2in}[Y] \in \Gamma(N^{\perp}/N).
$$
This connection is flat if and only if $(M,g)$ is a pp-wave.
\end{prop}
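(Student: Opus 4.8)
The plan is to check, in order, that $\bar g$ is well defined and positive-definite, that $\overline{\nabla}$ is a bona fide linear connection on $N^{\perp}/N$, that the curvature of $\overline{\nabla}$ is induced fibrewise by the curvature endomorphism $R$ of $g$, and finally to convert flatness of $\overline{\nabla}$ into the pp-wave identity of Definition~\ref{def:pp00} using the pair symmetry of the Riemann tensor. For $\bar g$: since $N$ is null, $N \in \Gamma(N^{\perp})$, and for $X,Y \in \Gamma(N^{\perp})$ and $f \in C^{\infty}(M)$ one has $g(X + fN, Y) = g(X,Y) + f\,g(N,Y) = g(X,Y)$, so $\bar g([X],[Y]) \defeq g(X,Y)$ is independent of representatives. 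Positive-definiteness is the standard fact of Lorentzian linear algebra that the orthogonal complement of a null vector is a degenerate hyperplane whose radical is precisely the null line it contains (\cite[Lemma~28,~p.~142]{o1983}); hence $g|_{N^{\perp}}$ is positive semidefinite with kernel $\mathbb{R}N$ and descends to a positive-definite form on $N^{\perp}/N$. For later use I would also record that $(N^{\perp})^{\perp} = \mathbb{R}N$.

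Next, for $\overline{\nabla}_V[Y] \defeq [\nabla_V \tilde Y]$ (with $\tilde Y$ any representative of $[Y]$) to make sense, two things need checking, and both rest on $N$ being parallel. First, $\nabla_V \tilde Y \in \Gamma(N^{\perp})$ whenever $\tilde Y \in \Gamma(N^{\perp})$, since $g(\nabla_V \tilde Y, N) = V(g(\tilde Y, N)) - g(\tilde Y, \nabla_V N) = 0$; second, the resulting class is independent of the representative, since $\nabla_V(\tilde Y + fN) = \nabla_V \tilde Y + (Vf)N$ gives $[\nabla_V(\tilde Y + fN)] = [\nabla_V \tilde Y]$. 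That $\overline{\nabla}$ is $C^{\infty}(M)$-linear in $V$ and obeys the Leibniz rule in $[Y]$ is then immediate from the corresponding properties of $\nabla$ (and $\overline{\nabla}\bar g = 0$ drops out of metric compatibility of $\nabla$ in the same way, although the proposition does not ask for it).

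For the curvature: because $\nabla$ preserves $\Gamma(N^{\perp})$, iterating the definition gives, for $V,W \in \mathfrak{X}(M)$ and a representative $\tilde Y$ of $[Y]$,
\[
\overline{\nabla}_V \overline{\nabla}_W [Y] - \overline{\nabla}_W \overline{\nabla}_V [Y] - \overline{\nabla}_{[V,W]}[Y] \;=\; \big[\,R(V,W)\tilde Y\,\big],
\]
with $R(V,W)\tilde Y \in \Gamma(N^{\perp})$ for the same reason, so the right-hand side is a genuine section of $N^{\perp}/N$. Hence $\overline{\nabla}$ is flat if and only if $R(V,W)\tilde Y \in \mathbb{R}N$ for all $V,W \in \mathfrak{X}(M)$ and all $\tilde Y \in \Gamma(N^{\perp})$.

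Finally I would translate this last condition. With the sign convention $\text{Rm}(v,w,x,y) = g(R(v,w)x,y)$ and the symmetry $\text{Rm}(v,w,x,y) = \text{Rm}(x,y,v,w)$, for any $Z \in \Gamma(N^{\perp})$ one has $g(R(V,W)\tilde Y, Z) = \text{Rm}(V,W,\tilde Y,Z) = \text{Rm}(\tilde Y,Z,V,W) = g(R(\tilde Y,Z)V, W)$. If $(M,g)$ is a pp-wave, the right side vanishes for every $V,W \in \mathfrak{X}(M)$ and every $\tilde Y, Z \in \Gamma(N^{\perp})$, so $R(V,W)\tilde Y$ is $g$-orthogonal to all of $N^{\perp}$, i.e.\ $R(V,W)\tilde Y \in (N^{\perp})^{\perp} = \mathbb{R}N$, and $\overline{\nabla}$ is flat. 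Conversely, if $\overline{\nabla}$ is flat then $R(V,W)\tilde Y \in \mathbb{R}N$, so $g(R(V,W)\tilde Y, Z) = 0$ for all $Z \in \Gamma(N^{\perp})$ (since $g(N,Z) = 0$); by the displayed identity $g(R(\tilde Y,Z)V, W) = 0$ for every $W$, whence $R(\tilde Y,Z)V = 0$ for all $\tilde Y, Z \in \Gamma(N^{\perp})$ and $V \in \mathfrak{X}(M)$ --- precisely the pp-wave condition. The one genuinely non-mechanical point is this last step: one must recognize that flatness of $\overline{\nabla}$ is the assertion ``$R(V,W)$ maps $N^{\perp}$ into $\mathbb{R}N$,'' and that the pair symmetry of $\text{Rm}$, interchanging the slots $(V,W)$ with the two $N^{\perp}$-slots, converts this into the pp-wave identity; the nullity of $N$, which makes $(N^{\perp})^{\perp}$ the line $\mathbb{R}N$ rather than $0$, is exactly what upgrades one implication to an equivalence.
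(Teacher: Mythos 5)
Your proposal is correct and follows essentially the same route as the paper's own proof: well-definedness of $\bar g$ and $\overline{\nabla}$ from the nullity and parallelism of $N$, identification of the curvature of $\overline{\nabla}$ with $[R(V,W)\tilde Y]$, and the pair symmetry $\text{Rm}(V,W,X,Y)=\text{Rm}(X,Y,V,W)$ to convert flatness into the pp-wave identity. You spell out a few steps the paper leaves implicit (notably $(N^{\perp})^{\perp}=\mathbb{R}N$ and the two directions of the final equivalence), but the argument is the same.
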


\begin{proof}
The metric $\bar{g}$ will be well defined, and positive definite, whenever $V$ is null. To check the latter, note that every $X \in \Gamma(N^{\perp})$ not proportional to $V$ must satisfy $g(X,X) > 0$, so that $\bar{g}$ is nondegenerate, in fact positive-definite.   To verify the former, note that if $[X] = [X']$ and $[Y] =[Y']$, so that  $X' = X +fN$ and $Y' = Y+kN$ for some smooth functions $f,k$, then
$$
\bar{g}([X'],[Y']) = g(X',Y') = g(X,Y) =  \bar{g}([X],[Y]).
$$
On the other hand, the connection $\overline{\nabla}$ is not well defined for an arbitrary null vector field $N$, but it will be so if $N$ is parallel, the reason being that $\cd{V}{Y}  \in \Gamma(N^{\perp})$ if $N$ is parallel, in which case
$$
\conn{V}{Y'} = [\cd{V}{Y'}] = [\cd{V}{Y}]+\cancelto{0}{[V(k)N]} + \cancelto{0}{[k\cd{V}{N}]}\, = \conn{V}{Y}.
$$
That $\overline{\nabla}$ is indeed a linear connection follows easily.  Now, if this connection is flat, then by definition its corresponding curvature endomorphism
\beqa
\bar{R}\colon \mathfrak{X}(M)  \times \mathfrak{X}(M) \!\!\!\!&\times&\!\!\!\! \Gamma(N^{\perp}/N) \lra \Gamma(N^{\perp}/N),~\text{given by}\nonumber\\
\bar{R}(V,W)[X] \!\!\!\!&\defeq&\!\!\!\! \overline{\nabla}_{\!V}[\overline{\nabla}_{\!W}[X]] - \overline{\nabla}_{\!W}[\overline{\nabla}_{\!V}[X]] - \conn{[V,W]}{X},\nonumber
\eeqa
vanishes, for any section $[X] \in \Gamma(N^{\perp}/N)$ and vector fields $V,W \in \mathfrak{X}(M)$. Using $\bar{g}$, this flatness condition is equivalent to
$$
\bar{g}(\bar{R}(V,W)[X],[Y]) = 0\hspace{.1in}\text{for all}\hspace{.1in}V,W \in \mathfrak{X}(M)\hspace{.1in},\hspace{.1in}[X],[Y] \in \Gamma(N^{\perp}/N).
$$
But if we unpack the definitions of $\overline{\nabla}$ and $\bar{g}$, we see that
$$
\bar{g}(\bar{R}(V,W)[X],[Y]) = \text{Rm}(V,W,X,Y) = \text{Rm}(X,Y,V,W).
$$
It follows that $\bar{R} = 0$ if and only if $R(X,Y)V = 0$ for all $X,Y \in \Gamma(N^{\perp})$ and $V \in \mathfrak{X}(M)$, which, by Definition \ref{def:pp00}\,---\,and our discussion below \eqref{eqn:Christ}\,---\,is precisely the condition to be a pp-wave.
\end{proof}

\section*{Acknowledgments}
The author thanks Miguel Angel Javaloyes and Marcus Werner for the improvement to \eqref{def:ppwave} in Definition \ref{def:pp00} and for very helpful discussions regarding the proof of Proposition \ref{prop:pp-wave}.

\section*{References}
\renewcommand*{\bibfont}{\footnotesize}
\printbibliography[heading=none]
\end{document}